\newcommand{\mass}{\Lambda}
\newcommand{\stiff}{\Sigma}
\def\meanint{-\hspace*{-9pt}\int}
\def\dspmeanint{-\hspace*{-11pt}\int}
\def\regmesh{{\rm reg}(\Tau)}
\def\delt{{ {\scriptstyle \Delta}t }}
\def\ptdelt{{ {\scriptscriptstyle \Delta}t }}
\def\Ndelt{{ N_{\!\ptdelt} }}
\def\Vol{{\text{\footnotesize \sl Vol}}}
\def\xKIL{{x_\ptKIL}}
\def\xdKIdL{{x_\ptdKIdL}}
\def\KdotIKplus{{\K_{\!\scriptscriptstyle\odot}\I\K_{\!\scriptscriptstyle\oplus}}}
\def\ptKdotIKplus{{\ptK_{\!\scriptscriptstyle\odot}\ptI\ptK_{\!\scriptscriptstyle\oplus}}}
\def\ptKdotIKplus{{\ptK_{\!\scriptscriptstyle\odot}\ptI\ptK_{\!\scriptscriptstyle\oplus}}}
\def\dKdotKplus{{d_{\ptK_{\!\scriptscriptstyle\odot},\ptK_{\!\scriptscriptstyle\oplus}}}}
\def\nuKdotKplus{{\vec e_{\ptK_{\!\scriptscriptstyle\odot}\!,\hspace*{-1pt}\ptK_{\!\scriptscriptstyle\oplus}}}}
\def\NKdotIKplus{{\vec n_{\ptK_{\!\scriptscriptstyle\odot}\ptI\ptK_{\!\scriptscriptstyle\oplus}}}}
\def\Kdot{{\K_{\!\scriptscriptstyle\odot}}}
\def\Kplus{{\K_{\!\scriptscriptstyle\oplus}}}
\def\ptKdot{{\ptK_{\!\scriptscriptstyle\odot}}}
\def\ptKplus{{\ptK_{\!\scriptscriptstyle\oplus}}}
\def\ddotplus{{d_{\scriptscriptstyle\odot,\oplus}}}
\def\Ndotplus{{\vec n_{\!\scriptscriptstyle\odot,\oplus}}}
\def\nudotplus{{\vec e_{\!\scriptscriptstyle\odot,\oplus}}}
\def\xKdot{{x_{\ptK_{\!\scriptscriptstyle\odot}}}}
\def\xKplus{{x_{\ptK_{\!\scriptscriptstyle\oplus}}}}
\def\xdot{{x_{{\!\scriptscriptstyle\odot}}}}
\def\xplus{{x_{{\!\scriptscriptstyle\oplus}}}}
\def\wdot{{w_{\!\scriptscriptstyle\odot}}}
\def\wplus{{w_{\!\scriptscriptstyle\oplus}}}
\def\xdKi{{x_{\ptK^{\!*}_{\!i}}}}
\def\xdKiun{{x_{\ptK^{\!*}_{\!i\hspace*{-1pt}+\!1}}}}
\def\xidemi{{x^*_{\hspace*{-1pt}i\hspace*{-1pt},\hspace*{-1pt}i\hspace*{-1pt}{\scriptscriptstyle +}\!{\scriptscriptstyle 1}}}}
\def\dKidKiun{{d_{\ptK^{\!*}_{\!i},\ptK^{\!*}_{\!i\hspace*{-1pt}+\!1}} }}
\def\nudKidKiun{{\vec e_{\ptK^{\!*}_{\!i},\ptK^{\!*}_{\!i\hspace*{-1pt}+\!1}} }}
\def\diiun{{d^{*}_{\!i,i\hspace*{-1pt}+\!1} }}
\def\nuiiun{{\vec e^{\,*}_{\!i,i\hspace*{-1pt}+\!1} }}
\def\ptdKiIdKiun{{{\ptK^{\!*}_{\!i}\ptI\ptK^{\!*}_{\!i\hspace*{-1pt}+\!1}}}}
\def\epsSK{{\epsilon_\ptS^\ptK}}
\def\epsSdK{{\epsilon_\ptS^\ptdK}}
\def\ptepsSK{{ \textstyle\epsilon_{\scriptscriptstyle S}^{\scriptscriptstyle K} }}
\def\ptepsSdK{{ \textstyle\epsilon_{\scriptscriptstyle S}^{ \scriptscriptstyle K^{\!*}} }}
\def\xdotplus{{x^*_{\!\scriptscriptstyle\odot\!,\!\oplus}}}
\newcommand{\R}{{\mathbb R}}
\newcommand{\N}{{\mathbb N}}
\newcommand\bel{\begin{equation}\label}
\newcommand\ee{\end{equation}}
\newcommand\ba{\begin{array}}
\newcommand\bal{\begin{array}{l}}
\newcommand\ea{\end{array}}
\def\dsp{\displaystyle}
\newtheorem{theo}{Theorem}[section]
\newtheorem{prop}[theo]{Proposition}
\newtheorem{lem}[theo]{Lemma}
\newtheorem{rem}[theo]{Remark}
\newtheorem{defi}[theo]{Definition}
\newcommand{\llangle}{\langle\!\langle}
\newcommand{\rrangle}{\rangle\!\rangle}
\def\ptl{{\partial}}
\newcommand{\grad}{{\nabla}}
\def\dist{{\rm dist\,}}
\def\diam{{\rm diam\,}}
\newcommand\1{\hbox{\hbox{1}\kern-.3em I}}
\def\dsp{\displaystyle}
\def\div{{\rm div\,}}
\def\Div{{\rm div\, }}
\def\grad{{\,\nabla }}
\def\dist{{\rm dist\,}}
\def\ph{{\varphi}}
\def\Om{{\Omega}}
\def\ptl{{\partial}}
\def\ba{\begin{array}}
\def\bal{\begin{array}{l}}
\def\ea{\end{array}}
\def\be{\begin{equation}}
\def\bel{\begin{equation}\label}
\def\ee{\end{equation}}
\def\iint{\int\!\!\!\!\int}
\def\K{{\scriptstyle K}}
\def\L{{\scriptstyle L}}
\def\dK{{\scriptstyle K^*}}
\def\dL{{\scriptstyle L^*}}
\def\diezK{{\scriptstyle K^\diamond}}
\def\ptK{{\scriptscriptstyle K}}
\def\ptL{{\scriptscriptstyle L}}
\def\ptdK{{\scriptscriptstyle K^*}}
\def\ptdL{{\scriptscriptstyle L^*}}
\def\ptdiezK{{\scriptscriptstyle K^{\diamond}}}
\def\ptD{{\scriptstyle D}}
\def\Tau{{\boldsymbol{\mathfrak T}}}
\def\Drond{{\boldsymbol{\mathfrak D}}}
\def\Frond{{\mathcal M}}
\def\Grond{{\mathcal G}}
\def\Nrond{{\scriptstyle \mathcal N}}
\def\dNrond{{{\scriptstyle \mathcal N}^*}}
\def\Srond{{\boldsymbol{\mathfrak S}}}
\def\Mrond{{\boldsymbol{\mathfrak M}^{\hspace*{-1pt}o}}}
\def\MrondOm{{\boldsymbol{\mathfrak M}^{\hspace*{-1pt}o}_{\hspace*{-1pt}\Om}}}
\def\MrondGN{{\boldsymbol{\mathfrak M}^{\hspace*{-1pt}o}_{\hspace*{-1pt}\Gamma_{\hspace*{-1pt}N}}}}
\def\ptMrondGN{{\scriptscriptstyle \boldsymbol{\mathfrak M}^{\hspace*{-0.7pt}o}_{\hspace*{-0.7pt}\Gamma_{\hspace*{-0.7pt}N}}}}
\def\ptMrondOm{{\scriptscriptstyle \boldsymbol{\mathfrak M}^{\hspace*{-0.7pt}o}_{\hspace*{-0.7pt}\Om}}}
\def\dMrond{{\boldsymbol{\mathfrak M}^{\hspace*{-1pt}*}}}
\def\diezMrond{{\boldsymbol{\mathfrak M}^{\hspace*{-1pt}\scriptstyle \diamond}}}
\def\ptTau{{\scriptscriptstyle \boldsymbol{\mathfrak T}}}
\def\ptDrond{{\scriptscriptstyle \boldsymbol{\mathfrak D}}}
\def\ptMrond{{\scriptscriptstyle \boldsymbol{\mathfrak M}^{\hspace*{-0.7pt}o}}}
\def\ptdMrond{{\scriptscriptstyle \boldsymbol{\mathfrak M}^{\hspace*{-0.7pt}*}}}
\def\ptdiezMrond{{\scriptscriptstyle
\boldsymbol{\mathfrak M}^{\hspace*{-0.7pt}\scriptscriptstyle \diamond}}}
\def\wMrond{{w^\ptMrond}}
\def\wdMrond{{w^\ptdMrond}}
\def\ptbarTau{{\overline{\ptTau}}}
\def\xK{{x_\ptK}}
\def\xL{{x_\ptL}}
\def\xdK{{x_\ptdK}}
\def\xdL{{x_\ptdL}}
\def\pt{\ensuremath{\partial_t}}
\def\I{{\!|\!}}
\def\ptI{{{\scriptscriptstyle\!|\!}}}
\def\ProjD{{\text{Proj}_\ptD}}
\def\dProjD{{\text{Proj}^*_\ptD}}
\def\size{{\text{size}}}
\def\dKL{{d_{\ptK\!\ptL}}}
\def\ddKdL{{d_{\ptdK\!\ptdL}}}
\def\nuK{{\nu_{\ptK}}}
\def\DM{{\scriptstyle D}}
\def\SDM{{\scriptstyle S}}
\def\ptD{{\scriptscriptstyle D}}
\def\ptS{{\scriptscriptstyle S}}
\def\Vrond{{\scriptstyle\mathcal V}}
\def\dVrond{{{\scriptstyle\mathcal V}^*}}
\def\ptVrond{{\scriptscriptstyle\mathcal V}}
\def\ptdVrond{{{\scriptscriptstyle\mathcal V}^*}}
\def\ptptl{{\scriptscriptstyle \ptl}}
\def\KIL{{\K\I\L}}
\def\dKIdL{{\dK\I\dL}}
\def\ptKIL{{\ptK\ptI\ptL}}
\def\ptdKIdL{{\ptdK\ptI\ptdL}}
\def\mKIL{{m_\ptKIL}}
\def\mdKIdL{{m_\ptdKIdL}}
\def\Delt{{{\scriptstyle \Delta} t}}
\def\diam{{\rm diam\,}}
\def\Bleft{{\biggl[\hspace*{-3pt}\biggl[}}
\def\Bright{{\biggr]\hspace*{-3pt}\biggr]}}
\def\Aleft{{\biggl\{\!\!\!\biggl\{}}
\def\Aright{{\biggr\}\!\!\!\biggr\}}}
\def\Cleft{{\biggl<\hspace*{-4pt}\biggl<}}
\def\Cright{{\biggr>\hspace*{-4pt}\biggr>}}
\def\char{{1\!\mbox{\rm l}}}
\newcommand{\eps}{\varepsilon}
\newcommand{\const}{\mathrm{Const}}
\newcommand{\Grad}{\mathrm{\nabla}}
\newcommand{\bM}{\ensuremath{\mathbf{M}}}
\newcommand{\Iap}{I_{\mathrm{app}}}
\newcommand{\abs}[1]{\left| #1 \right|}
\newcommand{\pOm}{\ensuremath{\partial\Omega}}
\def\ph{{\varphi}}
\def\ptDelt{{{\scriptscriptstyle \Delta} \scriptstyle t}}
\def\Delt{{{\scriptstyle \Delta} t}}
\def\T{\scriptstyle T}
\def\Trond{{\mathcal T}}
\begin{document}

\title[DDFV schemes for the bidomain cardiac  model]{Convergence
of discrete duality finite volume schemes for the cardiac bidomain model}

\date{\today}

\author[B. Andreianov]{B. Andreianov}
\address[Boris Andreianov]{\newline
         Laboratoire de Math\'ematiques CNRS UMR 6623\newline
         Universit\'e de Franche-Comt\'e\newline
         16 route de Gray\newline
         25 030 Besanc
         $\hspace*{-3.3pt}_{_{^{\mathsf ,}}}$on
         Cedex, France} \email[]{boris.andreianov\@@univ-fcomte.fr}

\author[M. Bendahmane]{M. Bendahmane}
\address[Mostafa Bendahmane]{\newline
        Universit\'e Victor S\'egalen - Bordeaux 2 \newline
146 rue L\'eo Saignat, BP 26 \newline
33076 Bordeaux, France}
\email[]{mostafa\underline{~}bendahmane\@@yahoo.fr}

\author[K. H. Karlsen]{K. H. Karlsen}
\address[Kenneth Hvistendahl Karlsen]{\newline
         Centre of Mathematics for Applications \newline
         University of Oslo\newline
         P.O. Box 1053, Blindern\newline
         N--0316 Oslo, Norway}
\email[]{kennethk\@@math.uio.no}
\urladdr{http://folk.uio.no/kennethk}

\author[C. Pierre]{C. Pierre}
\address[Charles Pierre]{\newline
         Laboratoire de Math\'ematiques et Applications\newline
         Universit\'e de Pau et du Pays de l'Adour\newline
         av. de l'Universit\'e BP 1155\newline
         64013 Pau Cedex, France} \email[]{charles.pierre\@@univ-pau.fr}

\thanks{This article was written as part of the the international research program
on Nonlinear Partial Differential Equations at the Centre for
Advanced Study at the Norwegian Academy of Science
and Letters in Oslo during the academic year 2008--09.
The authors thank Dr.~Lars Grasedyck (Max Planck Institute, Leipzig)
for providing and helping us with the H-Matrices Library.}

\keywords{Cardiac electrical activity, bidomain model, finite volume schemes,
convergence, degenerate parabolic PDE}

\begin{abstract}
We prove convergence of discrete duality finite volume (DDFV) schemes
on distorted meshes for a class of simplified macroscopic bidomain models
of the electrical activity in the heart. Both time-implicit and
linearised time-implicit schemes  are treated.
A short description is given of the 3D DDFV
meshes and of some of the associated discrete calculus tools.
Several numerical tests are presented.
\end{abstract}

\maketitle

\tableofcontents

\section{Introduction}
\label{sec:intro}

We consider the heart of a living organism that occupies a
fixed domain $\Om$, which is assumed to be a bounded
open subset of $\mathbb{R}^3$ with Lipschitz boundary
$\pOm$.  A prototype model for the cardiac electrical
activity is the following
nonlinear reaction-diffusion system
\begin{equation}
    \label{S1-p}
   \left\{ \begin{split}
        & \pt v-\Div \left(\bM_i(x)\Grad u_i\right)
        +h[v]= \Iap, \qquad (t,x)\in Q,\\
        & \pt v + \Div \left(\bM_e(x)\Grad u_e\right)
        +h[v]=\Iap,
        \qquad (t,x)\in Q,
    \end{split}
    \right.
\end{equation}
where $Q$ denotes the time-space cylinder $(0,T)\times \Om$.

This model, called the bidomain model, was first proposed in the
late 1970s by Tung \cite{Tung1978} and is now the generally accepted
model of electrical behaviour of cardiac tissue 
(see Henriquez \cite{Henriquez}, Keener and Sneyd \cite{Keener}). 
The functions $u_i=u_i(t,x)$ and $u_e=u_e(t,x)$ represent the
\textit{intracellular} and \textit{extracellular} electrical
potentials, respectively, at time $t\in (0,T)$ and location $x \in \Om$.
The difference $v=u_i-u_e$ is known as the \textit{transmembrane}
potential. The  conductivity properties of the two media
are modelled by anisotropic, heterogeneous tensors
$\bM_i(x)$ and $\bM_e(x)$. The surface capacitance of the
membrane is usually represented by a positive constant 
$c_m$; upon rescaling, we can assume $c_m=1$.
The stimulation currents applied to the intra- and extracellular spaces are
represented by an $L^2(Q)$ function $\Iap=\Iap(t,x)$.
Finally, the \textit{transmembrane ionic current} $h[v]$ is
computed from the potential $v$. The system is closed by
choosing a relation that links $h[v]$ to $v$ and
specifying appropriate initial-boundary conditions.
We stress that realistic models include a system of ODEs for computing the ionic
current as a function of the transmembrane potential
and a series of additional ``gating variables'' aiming to
model the ionic transfer across the cell
membrane (see, e.g., \cite{LuoRudy,Hodgkin-Huxley,Noble,Keener}).
This makes the relation $h=h[v]$ non-local in time.

Herein we focus on the issue of discretisation
in space of the bidomain model. The presence of the
ODEs, some of them being quite stiff,
greatly complicates the issue of discretisation in time.
It also results in a huge gap between theoretical convergence results and
the practical computation of a reliable solution. We surmise that the precise
form of the relations that link $h[v]$ to $v$ is not essential for
the validation of the space discretisation techniques. 
Therefore, as in \cite{BK1,BK2}, we study \eqref{S1-p}
under the greatly simplifying assumption
that the ionic current is represented locally, in time
and space, by a nonlinear function $h(v)$.
However, such a simplification allows to mimic, to a certain extent,
the depolarisation sequence in the cardiac tissue, taking
the ionic current term $h[v]$ to be a cubic polynomial
(bistable equation); this choice models the fast 
inward sodium current that initiates
depolarisation (cf., e.g., \cite{ColliGuerriTentoni90}).

In the context of electro-cardiology the relevant boundary
condition would be a Neumann condition
for the fluxes associated with the intra- and
extracellular electrical potentials:
\begin{equation*}
    \bM_{i,e}(x)\Grad
        u_{i,e}\cdot n=s_{i,e} \quad \text{ on } (0,T)\times\ptl\Om.
\end{equation*}
 It serves to couple the heart electrical activity with the much weaker electrical
phenomena taking place in the torso. The simplest case is the
one of the isolated heart, namely $s_{i,e}=0$. For the mathematical
study we are heading to, we consider rather general
mixed Dirichlet-Neumann boundary conditions of the form
\begin{equation}
    \label{S2}
    u_{i,e}=g_{i,e} \quad  \text{ on } (0,T)\times\Gamma_D,
        \qquad  \bM_{i,e}(x)\Grad
        u_{i,e}\cdot n=s_{i,e} \quad \text{ on } (0,T)\times\Gamma_N,
\end{equation}
where $\partial \Om$ is partitioned into sufficiently regular parts
$\Gamma_N$ and $\Gamma_D$, and $n$ denotes the
$\mathcal H^2$-a.e. defined exterior unit normal
vector to the Neumann part $\Gamma_N$ of the boundary $\ptl\Om$.
To keep the analysis simple, let us assume 
that $s_{i,e}\in L^2((0,T)\times\Gamma_N)$; for $g_{i,e}$, we assume 
$g_{i,e}\in L^2(0,T; H^{1/2}(\Gamma_D))$ (in fact, we consider 
$g_{i,e}$ extended to $L^2(0,T;H^1(\Om))$ functions).

Regarding the initial data, we prescribe only the 
transmembrane potential:
\begin{equation}
    \label{S3}
    v(0,x)=v_0(x), \quad x\in \Om.
\end{equation}

Clearly, \eqref{S1-p} and \eqref{S3} are invariant under the
simultaneous change of $u_{i,e}$ into $u_{i,e}+k$, $k\in \R$.
In the case $\ptl\Om=\Gamma_N$, $\Gamma_D=\O$,
also  \eqref{S2} is invariant under this change; therefore,
for the sake of being definite,
we normalise $u_{e}$ by assuming
\begin{equation}\label{eq:Ui-normalized}
\text{whenever $\Gamma_D=\O$},
\qquad \int_\Om u_e(t,\cdot)= 0
\quad\text{for a.e. $t\in (0,T)$}.
\end{equation}
It is easy to see that the existence
of solutions to \eqref{S1-p},\eqref{S3} requires
the compatibility condition
\begin{equation}\label{eq:Neumann-compatibility}
\text{whenever $\Gamma_D=\O$},
\qquad \int_{\ptl \Om} s_i(t,\cdot)+s_e(t,\cdot)= 0
\quad\text{for a.e. $t\in (0,T)$}.
\end{equation}

Notice that the diffusion operators $\bM_{i,e}(x)\Grad u_{i,e}$ in \eqref{S1-p}
are linear in the gradient $\grad u_{i,e}$, heterogeneous \& anisotropic,
and time-independent; these assumptions
seem to be sufficiently general to capture the phenomena
of the electrical activity in the heart.
More general models with time-dependent and
nonlinear in $\Grad u_{i,e}$  diffusion of the Leray-Lions
type were studied in \cite{BK1}. Here we assume that
$\Bigl(\bM_{i,e}(x)\Bigr)_{x\in\Om}$ is a family of symmetric matrices,
uniformly bounded and positive definite:
\begin{equation*}
\exists \gamma \;\; \text{for a.e. $x\in\Om$},\;\; \forall \xi\in\R^3,
\;\;\; \frac 1\gamma |\xi|^2\leq
(\bM_{i,e}(x)\xi)\cdot\xi \leq \gamma |\xi|^2.
\end{equation*}
In particular, we have $\bM_{i,e}\in L^\infty(\Om)$.

Now let us describe in detail the ionic current function $h=h(v)$.
We assume that $h:\R\to \R$ is a continuous function, and that there 
exist $r\in (2,+\infty)$ and constants $\alpha,L,l>0$ such that
\begin{equation}
    \label{eq:h-conseq}
    \frac{1}{\alpha}\abs{v}^{r}\le \abs{h(v)v}
    \le \alpha\left(\abs{v}^{r}+1\right),
\end{equation}
\begin{equation}
    \label{monotone-bis}
    \tilde h: z\mapsto h(z)+Lz+l \quad \text{is strictly increasing
    on $\R$, with $\lim_{z\to 0}\tilde h(z)/z=0$}.
\end{equation}
For the later use, we set
$$b:z\mapsto \tilde h(z)/z, \;\; b(0)=0.$$
It is rather natural, although not necessary, to require in addition that
\begin{equation}\label{eq:strict-monot-of-h}
\forall \,z,s\in \R \quad  (\tilde h(z)-\tilde h(s))(z-s) \geq 
\frac {1}{C}(1+|z|+|s|)^{r-2} |z-s|^2.
\end{equation}
According to \cite{Colli1,Colli4}, the most appropriate value 
is $r=4$, which means that the non-linearity 
$h$ is of cubic growth at infinity. Assumptions 
\eqref{eq:h-conseq},\eqref{monotone-bis}
are automatically satisfied by any cubic polynomial $h$
with positive leading coefficient.

A number of works have been devoted to the theoretical and
numerical study of the above bidomain model.
Colli Franzone and Savar\'e \cite{Colli4} prove
the existence of weak solutions for the model
with an ionic current term driven by a single
ODE, by applying the theory of evolution
variational inequalities in Hilbert spaces.
Sanfelici \cite{San:2002} considered the same approach 
to prove the convergence of Galerkin approximations for the bidomain model. 
Veneroni in \cite{veneroni-06} extended this technique
to prove existence and uniqueness results for
more sophisticated ionic models.
Bourgault, Coudi\`{e}re and Pierre \cite{coudiere1} prove
existence and uniqueness results for the bidomain
equations, including the FitzHugh-Nagumo and Aliev-Panfilov models,
by applying a semigroup approach and also by using the
Faedo--Galerkin method and compactness techniques.
Recently, Bendahmane and Karlsen \cite{BK1} proved
the existence and uniqueness for a nonlinear
version of the simplified bidomain equations \eqref{S1-p} by using
a uniformly parabolic regularisation of the
system and the Faedo--Galerkin method.

Regarding finite volume (FV) schemes for cardiac problems, a first
approach is given in Harrild and Henriquez \cite{henriquez_fv}.
Coudi\`{e}re and Pierre \cite{cp:05} prove
convergence of an implicit FV approximation to the monodomain
equations.  We mention also the work of Coudi\`{e}re,
Pierre and Turpault \cite{CoudierePierreTurpault}
on the well-posedness and testing of the DDFV method
for the bidomain model.  Bendahmane and
Karlsen \cite{BK2} analyse a FV method for
the bidomain model with  Dirichlet boundary conditions, supplying
various existence, uniqueness and convergence results.
Finally, Bendahmane, B\"urger and Ruiz \cite{Benetal} analyse
a parabolic-elliptic system with Neumann boundary conditions, adapting
the approach in  \cite{BK2}; they also provide numerical experiments.

In this paper, as in \cite{BK2}, we use a finite volume approach for the space
discretisation of \eqref{S1-p} and the backward Euler  scheme
in time. Due to a different choice of the finite volume discretisation,
we drop the restrictions on the mesh and on the
isotropic and homogeneous structure of the tensors $\bM_{i,e}$
imposed in \cite{BK2}. We also consider general boundary conditions \eqref{S2}.
The space discretisation strategy we use is essentially the one
described and  implemented by Pierre \cite{Pierre} and
Coudi\`ere et al.~\cite{CoudierePierreTurpault,Pierre-FVCA5}.
More precisely, we utilise different types of DDFV discretisations
of the 3D diffusion operator; in addition to the
scheme of \cite{Pierre,Pierre-FVCA5}, we examine the schemes
described in \cite{ABK-FVCA5,Herm4,AndrBend} (see
also \cite{ABK}) and \cite{CoudiereHubert}.  It should be noticed 
that $2D$ bidomain simulations on slices of the $3D$ heart
are also of interest. The standard 2D DDFV construction can be applied to problem
\eqref{S1-p},\eqref{S2},\eqref{S3} on 2D polygonal domains; the 3D convergence results
readily extend to the 2D case.

The DDFV approximations were designed
specifically for anisotropic and/or nonlinear diffusion problems, and
they work on rather general (eventually, distorted, non-conformal and locally
refined) meshes. We refer to Hermeline \cite{Herm0,Herm,Herm2,Herm3,Herm4},
Domelevo and Omn\`es \cite{DomOmnes}, Delcourte, Domelevo
and Omn\`es \cite{DomDelcOmnes}, Andreianov, Boyer
and Hubert \cite{ABH``double''}, and Herbin and Hubert \cite{HerbinHubert-bench}
for background information on DDFV methods. Most of these works
treat 2D linear anisotropic, heterogeneous diffusion
problems, while the case of discontinuous diffusion operators 
have been treated by Boyer and Hubert in \cite{BH}.
Hermeline \cite{Herm3,Herm4} treats the analogous
3D problems,  \cite{DomDelcOmnes} treats the Stokes problem,
and the work \cite{ABH``double''} is devoted to
the nonlinear Leray-Lions framework.

A number of numerical simulations of the full 
bidomain system (the PDE \eqref{S1-p} 
for $u_{i,e}$ plus ODEs for $h[v]$) coupled with the torso can be found
in \cite{lines-02,SC.4.Lines.2003.b,CoudierePierreTurpault,Sundnes.2005.1,Sundnes.2005.2}.

Our study can be considered as a theoretical and numerical
validation of the DDFV discretisation strategy for the
bidomain model. For both a fully time-implicit scheme and a linearised
time-implicit scheme, we prove convergence  of different DDFV
discretisations  to the unique solution of  the bidomain model \eqref{S1-p}.
Then numerical experiments are reported to document some of the 
features of the DDFV space discretisations. A rescaled version of
model \eqref{S1-p}, together with a cubic shape for
$v\mapsto h[v]$, is used to simulate the propagation of 
excitation potential waves in an anisotropic medium. 
In our tests, we combine 2D and 3D DDFV 
schemes for the diffusion terms with fully explicit 
discretisation of the ionic current term; thus
numerical experiments validate this scheme, although we
were not able to justify its convergence theoretically.
Convergence of the numerical solutions towards the continuous
one is measured in three different ways: the first two ones are 
aimed at physiological applications (convergence 
for the activation time and for the propagation velocity), whereas 
the third one corresponds to the norm used in 
Theorem~\ref{th:convergence}. Implementation is detailed.
Due to a large number of unknowns and a relatively large
stencil of the 3D DDFV schemes, a careful preconditioning
is needed for the bidomain system matrix
that has to be inverted at each time step.
The preconditioning strategy we adopted here is 
developed in \cite{pierre-2010-precond}: it provides an almost 
linear complexity with respect to the matrix size for the system matrix inversion. 
The preconditioning combines the idea of hierarchical matrices 
decomposition \cite{h-mat-1,h-mat-2} with heuristics 
referred to as the \textit{monodomain} approximation \cite{colli-taccardi-05}.

The remaining part of this paper is organised as follows:
In Section \ref{sec:Theory-WellPosedness} we give the
definition of a weak solution to \eqref{S1-p},\eqref{S2},\eqref{S3}. Moreover,
we recast the problem into a variational form, from which
we deduce an existence and uniqueness result. In Section \ref{sec:DDFV}
we describe one of the 3D DDFV schemes, while
in Section \ref{sec:Scheme-and-Results} we formulate two
``backward Euler in time" \& ``DDFV in space" finite volume schemes,
and state the main convergence results. The proofs of
these results are postponed to Section \ref{sec:ConvergenceProof}; their
basis being Section \ref{sec:DiscreteCalculus}, where we
recall some mathematical tools for studying DDFV
schemes. Finally, Section \ref{sec:Numerics} is devoted
to numerical examples.

\section{Solution framework and well-posedness}\label{sec:Theory-WellPosedness}
We introduce the space
$$
\text{$V=$ closure of the set
$\Bigl\{v\in C^\infty(\R^3), \;v|_{\Gamma_{\!D}}=0 \Bigr\} $ in
the $H^1(\Om)$ norm.}
$$
In the case $\Gamma_D=\O$, we also use the
quotient space $V_0:=V/\{v\in V,\, v\equiv \const \}$. The
dual of $V $ is denoted by $V'$, with a corresponding duality pairing
$\langle \cdot,\cdot\rangle$.

We assume that the Dirichlet
data $g_{i,e}$  in \eqref{S2} are sufficiently regular, so that
$$
\text{$g_{i,e}$ are the traces on $(0,T)\times \Gamma_D$ of a
couple of $L^2(0,T;H^1(\Om))$ functions}$$
(we keep the same notation for the functions $g_{i,e}$ and their traces).
For the sake of simplicity, we assume that
$$\text{the Neumann data $s_{i,e}$
belong to $L^2((0,T)\times \Gamma_N)$.}$$
Finally, we require that
$$
\text{the initial function $v_0$ belongs to $L^2(\Om)$}.
$$

\begin{defi}\label{def:weak-sol}
A weak solution to
Problem \eqref{S1-p},\eqref{S2},\eqref{S3} is
a triple of functions
\begin{equation}\label{eq:Spaces-for-u-and-v}
\text{$(u_i,u_e,v):\Omega\to \R^3$ s.t.~$u_{i,e}-g_{i,e}\in
L^2(0,T;V)$, $v=u_i-u_e$, $v\in L^r(Q)$},
\end{equation}
 and such that \eqref{S1-p},\eqref{S2},\eqref{S3} are satisfied in $\mathcal
D'([0,T)\times (\Omega\cup\Gamma_N))$. In the case $\Gamma_D=\O$,
we normalise $u_e$ by requiring \eqref{eq:Ui-normalized}.
\end{defi}

\begin{rem}\label{rem:AL-def}\rm
It is not difficult to show that Definition~\ref{def:weak-sol} is
equivalent to a ``variational'' formulation of
Problem \eqref{S1-p},\eqref{S2},\eqref{S3}, in the spirit
of Alt and Luckhaus \cite{AL}.  Indeed, a triple $(u_i,u_e,v)$
satisfying \eqref{eq:Spaces-for-u-and-v} is a weak solution
of Problem \eqref{S1-p},\eqref{S2},\eqref{S3} if and only if
\eqref{S1-p},\eqref{S2},\eqref{S3}
are satisfied in the space $L^2(0,T;V')+L^{r'}(Q)$. This means precisely that the
distributional derivative $\pt v$ can be identified with an
element of $L^2(0,T;V')+L^{r'}(Q)$, and with this
identification there holds
\begin{equation}
 \label{eq:weak-sol-1}
    \begin{split}
        & \int_0^T \langle\pt v,\ph\rangle
        + \iint_Q \Bigl(\bM_i(x)\, \Grad u_i\cdot\Grad \ph
        +h(v)\ph\Bigr)-\int_0^T\!\!\!\int_{\Gamma_{\!\!N}} s_i\,\ph
        = \iint_Q \Iap\ph,\\
        & \int_0^T  \langle\pt v,\ph\rangle
        - \iint_Q \Bigl(\bM_e(x)\,\Grad u_e\cdot\Grad \ph
        +h(v)\ph\Bigr)-\int_0^T\!\!\!\int_{\Gamma_{\!\!N}} s_e\,\ph
        = \iint_Q \Iap\ph,
    \end{split}
\end{equation}
for all $\ph\in L^2(0,T;V)\cap L^r(Q)$, and
\begin{equation*}
    \begin{split}
        & \int_0^T  \langle\pt v,\ph\rangle
        = -\iint_Q  v\,\pt\ph- \int_\Om v_0(\cdot)\,\ph(0,\cdot)
    \end{split}
\end{equation*}
for all $\ph\in L^2(0,T,V)$ such that $\pt \ph \in L^\infty(Q)$
and $\ph(T,\cdot)=0$.
\end{rem}

We have the following chain rule:
\begin{lem}\label{lem:chain-rule}
Assume that $ v\in L^2(0,T;V)\cap L^r(Q)$ and $\ptl_t v\in L^2(0,T;V')+L^{r'}(Q)$. Then
$$
\int_0^T \langle \pt v\,,\,\zeta(t)v\rangle \;=\;
-\iint_Q \frac{v^2}{2}\,\pt \zeta\; - \int_\Om \frac{v_0^2}{2}\,\zeta(0),
\quad \forall \zeta\in \mathcal D([0,T)).
$$
\end{lem}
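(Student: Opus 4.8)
The plan is to establish the chain rule $\int_0^T\langle\pt v,\zeta(t)v\rangle = -\iint_Q \frac{v^2}{2}\pt\zeta - \int_\Om\frac{v_0^2}{2}\zeta(0)$ by the standard regularisation-and-passage-to-the-limit argument for Alt--Luckhaus-type time derivatives. The formal identity comes from $\langle\pt v,v\rangle = \frac12\pt(v^2)$; the issue is that $v$ itself is not an admissible test function (it lacks the $\pt\ph\in L^\infty$ regularity demanded in Remark~\ref{rem:AL-def}) and the pairing mixes the $L^2(0,T;V')$ and $L^{r'}(Q)$ components, so one cannot simply plug $\ph=\zeta v$ into a ready-made formula. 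The whole point is to justify the formal computation rigorously under the sole hypotheses $v\in L^2(0,T;V)\cap L^r(Q)$ and $\pt v\in L^2(0,T;V')+L^{r'}(Q)$.

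\textbf{Step 1 (time mollification).} First I would fix a standard mollifier $(\rho_\delta)_{\delta>0}$ in the time variable and set $v_\delta := v * \rho_\delta$ (mollifying the function extended suitably near $t=0$, using the initial trace $v_0$, so as to respect the second identity of Remark~\ref{rem:AL-def}). Because $v\in L^2(0,T;V)\cap L^r(Q)$, the regularised $v_\delta$ is smooth in time with values in $V\cap L^r$, and by commuting the mollification with the distributional derivative one gets $\pt v_\delta = (\pt v)*\rho_\delta$ lying in $L^2(0,T;V')+L^{r'}(Q)$ with the two pieces converging to their counterparts in the respective norms as $\delta\downarrow 0$. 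For the smooth-in-time $v_\delta$ the pointwise identity $\langle\pt v_\delta,\zeta v_\delta\rangle = \frac12\pt(\zeta v_\delta^2) - \frac12 v_\delta^2\,\pt\zeta$ holds in the classical sense, and integrating over $(0,T)$ with $\zeta\in\mathcal D([0,T))$ (so that the boundary term at $t=T$ vanishes and only $-\int_\Om \frac{v_\delta(0)^2}{2}\zeta(0)$ survives) yields the desired identity at the level $\delta>0$.

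\textbf{Step 2 (passage to the limit).} The main obstacle is the convergence of the left-hand side $\int_0^T\langle\pt v_\delta,\zeta v_\delta\rangle$, because the duality pairing is \emph{bilinear} in $v_\delta$ and one cannot pass to the limit in both factors by weak convergence alone. The clean way around this is to split $\pt v = \mu + g$ with $\mu\in L^2(0,T;V')$ and $g\in L^{r'}(Q)$, write $\int_0^T\langle\pt v_\delta,\zeta v_\delta\rangle = \int_0^T\langle\mu_\delta,\zeta v_\delta\rangle_{V',V} + \iint_Q g_\delta\,\zeta v_\delta$, and treat the two terms with the matching dualities $V'\!-\!V$ and $L^{r'}\!-\!L^r$. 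In each term one factor is mollified data converging strongly ($\mu_\delta\to\mu$ in $L^2(0,T;V')$, $g_\delta\to g$ in $L^{r'}(Q)$) while the other, $\zeta v_\delta$, converges strongly ($v_\delta\to v$ in $L^2(0,T;V)\cap L^r(Q)$, a property of mollification of strongly-measurable Bochner-integrable functions); strong-times-strong convergence passes to the limit, giving $\int_0^T\langle\pt v,\zeta v\rangle$. On the right-hand side, $v_\delta^2\to v^2$ in $L^1(Q)$ and $v_\delta(0)\to v_0$ in $L^2(\Om)$ — the latter being exactly where the initial condition $v(0)=v_0$ enters — so both terms converge to their stated limits.

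\textbf{Remark on the hard part.} The genuinely delicate point is not the algebra but the \emph{well-definedness} of the trace $v(0)$ and its identification with $v_0$: under the given hypotheses $v$ admits a representative in $C([0,T];L^2(\Om))$ (this is the standard Alt--Luckhaus continuity-in-time result for functions whose derivative lives in a sum of dual spaces), and it is this continuous representative that makes $v_\delta(0)\to v_0$ meaningful. I would either invoke this embedding as a known fact in the Alt--Luckhaus framework already cited via \cite{AL}, or verify it by the same mollification scheme applied to $\iint_Q\frac{v_\delta^2}{2}\pt\zeta$ and a Cauchy argument showing $t\mapsto \int_\Om v(t)^2$ has a continuous representative. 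Once this representative is in hand, the mollified boundary term is controlled uniformly and the passage $\delta\downarrow 0$ is routine.
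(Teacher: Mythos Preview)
Your proposal is correct and is precisely the standard Alt--Luckhaus time-mollification argument that the paper has in mind: the paper does not give a proof of this lemma but simply remarks that it ``can be proved along the lines of Alt and Luckhaus \cite{AL} and Otto \cite{Otto}'' (with further references to \cite{LionsMagenes} and \cite{BoyerFabrie}), and your sketch fills in exactly that outline. Your identification of the trace $v(0)=v_0$ (equivalently, the $C([0,T];L^2(\Om))$ embedding) as the one genuinely delicate point is accurate and consistent with how the paper implicitly uses the lemma in the context of Remark~\ref{rem:AL-def}.
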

This type of result is well known; for example, it can be proved along
the lines of  Alt and Luckhaus \cite{AL} and Otto \cite{Otto} (see
also \cite{LionsMagenes} and \cite[Theor\`eme II.5.11]{BoyerFabrie}).

The following lemma is a technical tool  adapted to the weak
formulation of Definition~\ref{def:weak-sol}.

\begin{lem}\label{lem:regularisation}
Let $\Omega$ be a Lipschitz domain.  There exists a family
of linear operators $(\mathcal R_\eps)_{\eps>0}$ from
$L^2(0,T,V)$ into $\mathcal D(\R\times\R^d)$ such that \\
- for all $z\in L^2(0,T,V)$, $\mathcal R_\eps(z)$ converges to $z$
 in $L^2(0,T,V)$;\\
- for all $z\in L^r(Q)\cap L^2(0,T,V)$,  $\mathcal R_\eps(z)$
converges to $z$  in $L^r(Q)$.
\end{lem}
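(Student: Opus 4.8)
The plan is to build each $\mathcal R_\eps$ as the composition of a (time-independent) Sobolev extension operator, a fixed space-time cut-off, and a space-time mollification; all three operations are linear, so $\mathcal R_\eps$ is linear, and the two convergence statements will both reduce to the classical $L^p$-theory of mollifiers once the argument has been extended to all of $\R\times\R^d$. Let me first fix the reading of the statement: since the range is $\mathcal D(\R\times\R^d)$, whose elements restricted to $\Om$ need not vanish on $\Gamma_D$ and hence need not lie in $V$, the convergence ``$\mathcal R_\eps(z)\to z$ in $L^2(0,T;V)$'' is understood in the ambient norm $\|\cdot\|_{L^2(0,T;H^1(\Om))}$ that $V$ inherits. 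The $\Gamma_D$-condition therefore plays no role here: it is automatically encoded in the fixed limit $z\in L^2(0,T;V)$, and only $H^1(\Om)$-norm convergence to it is required.

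First I would fix a bounded linear extension operator $E\colon H^1(\Om)\to H^1(\R^d)$ which is moreover bounded from $L^p(\Om)$ to $L^p(\R^d)$ for every $p\in[1,\infty]$; such a universal operator exists on the Lipschitz domain $\Om$ by Stein's extension theorem (alternatively one builds it by a partition of unity, local flattening of $\pOm$, and higher-order reflection). In particular $E$ is bounded $H^1(\Om)\to H^1(\R^d)$ and $L^r(\Om)\to L^r(\R^d)$. Applying $E$ for a.e. $t$ yields $Ez(t,\cdot)$ with $\|Ez\|_{L^2(0,T;H^1(\R^d))}\le C\|z\|_{L^2(0,T;V)}$, and, when in addition $z\in L^r(Q)$, $\|Ez\|_{L^r((0,T)\times\R^d)}\le C\|z\|_{L^r(Q)}$.

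Next I would fix $\chi\in C_c^\infty(\R^d)$ with $\chi\equiv 1$ on a $\delta_0$-neighbourhood of $\overline\Om$, extend $Ez$ by zero for $t\notin(0,T)$ — call the result $\overline{Ez}$, which lies in $L^2(\R;H^1(\R^d))$ and also in $L^r(\R\times\R^d)$ when applicable — and set
$$\mathcal R_\eps(z):=\rho_\eps*\bigl(\chi\,\overline{Ez}\bigr),$$
where $\rho_\eps(t,x)=\eps^{-(1+d)}\rho(t/\eps,x/\eps)$ is a standard mollifier on $\R\times\R^d$. The factor $\chi$ together with the zero time-extension makes $\chi\,\overline{Ez}$ compactly supported, so $\mathcal R_\eps(z)\in C_c^\infty(\R\times\R^d)=\mathcal D(\R\times\R^d)$, and linearity is immediate.

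Finally the convergence. For $(t,x)\in Q$ and $\eps<\delta_0$ the convolution only samples points where $\chi\equiv 1$, so $\mathcal R_\eps(z)=\rho_\eps*\overline{Ez}$ on $Q$. By the standard mollifier theory $\rho_\eps*\overline{Ez}\to\overline{Ez}$ in $L^2(\R;H^1(\R^d))$ (mollification commutes with $\partial_{x_i}$); restricting to $(0,T)\times\Om$, where $\overline{Ez}=z$, gives $\mathcal R_\eps(z)\to z$ in $L^2(0,T;H^1(\Om))$, which is the first claim. When $z\in L^r(Q)$, the same argument in $L^r(\R\times\R^d)$ gives $\mathcal R_\eps(z)\to z$ in $L^r(Q)$. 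I expect the only genuine point requiring care — the main obstacle — to be the existence of a \emph{single} extension operator bounded simultaneously in the $H^1$ and the $L^r$ scales, so that one family $\mathcal R_\eps$ serves both convergences; this is exactly what Stein's universal extension provides. The mild artefacts created by the zero time-extension near $t=0$ and $t=T$ are harmless, being absorbed by the $L^2(\R;\cdot)$- and $L^r(\R\times\R^d)$-convergence of the mollification.
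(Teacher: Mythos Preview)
Your reading of the statement is the gap. You argue that since $\mathcal D(\R\times\R^d)$ does not embed into $V$, ``converges in $L^2(0,T;V)$'' must mean convergence in the ambient $L^2(0,T;H^1(\Om))$-norm only, and you therefore drop the $\Gamma_D$-condition. But the lemma is invoked precisely to feed $\mathcal R_\eps(z)$ as a test function into the weak formulation \eqref{eq:weak-sol-1}, where test functions are required to lie in $L^2(0,T;V)\cap L^r(Q)$. So $\mathcal R_\eps(z)|_{\Om}$ \emph{must} vanish on $\Gamma_D$; this is the whole point of the regularisation. Your Stein-extension-plus-mollification produces smooth functions that are generically nonzero on $\Gamma_D$, hence not in $V$, and the argument in Theorem~\ref{theo:L^2-contraction} breaks down with your operator.

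What the paper does instead is to insert, before mollifying, a cutoff $\theta_\eps\in\mathcal D(\Om)$ equal to $1$ away from an $\eps$-strip $\Om_\eps$ around $\Gamma_D$, with $\|\nabla\theta_\eps\|_\infty\le C/\eps$; mollifying $\theta_\eps\,\mathcal E(z)$ at a compatible scale then yields $\mathcal R_\eps(z)$ vanishing near $\Gamma_D$, hence in $V$. The nontrivial step --- and the actual content of the lemma --- is to show that the bad term $\rho_\eps*(\nabla\theta_\eps\, z)$ tends to $0$ in $L^2(Q)$: this uses the Poincar\'e inequality in the thin strip $\Om_\eps$ (thickness $\sim\eps$, so $\|z\|_{L^2(\Om_\eps)}\le C\eps\|\nabla z\|_{L^2(\Om_\eps)}$) to beat the $1/\eps$ coming from $\nabla\theta_\eps$. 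Your construction is essentially the paper's pure-Neumann case ($\Gamma_D=\emptyset$, $V=H^1(\Om)$), where indeed no cutoff is needed; the Dirichlet and mixed cases require the extra ingredient you discarded.
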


Let us stress that the linearity of $\mathcal R_\eps(\cdot)$
is essential for the application of this lemma.
It is used to regularise $u_{i,e}$, so that one
can take $\mathcal R_\eps(u_{i,e})$ as test functions in
\eqref{eq:weak-sol-1}; for example, a priori estimates
for weak solutions and uniform bounds on their
Galerkin approximations will be obtained in this way.
In addition, a straightforward
application of the lemma is the following
uniqueness result:

\begin{theo}\label{theo:L^2-contraction}
Assume \eqref{eq:h-conseq} and \eqref{monotone-bis}.
Then there exists a unique weak solution $(u_i,u_e,v)$ to
Problem \eqref{S1-p},\eqref{S2},\eqref{S3}. Moreover, if
$(\hat u_i,\hat u_e,\hat v)$ is another weak
solution of Problem \eqref{S1-p},\eqref{S2},\eqref{S3}
corresponding to the initial
function $\hat v_0\in L^2(\Omega)$, then
$$
\text{ for a.e. $t\in(0,T)$,
$\|v(t)-\hat v(t)\|_{L^2(\Om)} \leq
e^{\sqrt{2Lt}}\|v_0-\hat v_0\|_{L^2(\Om)}$}.
$$
In addition, if \eqref{eq:strict-monot-of-h} holds
then $v$ depends continuously in
$L^r(Q)$ on $v_0$ in $L^2(\Om)$.
\end{theo}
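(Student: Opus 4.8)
The plan is to dispatch existence briefly and to concentrate on uniqueness together with the two stability estimates, since the latter carry the real content. For existence, I would follow the Faedo--Galerkin route of Bendahmane and Karlsen \cite{BK1}: approximate $(u_i,u_e)$ in finite-dimensional subspaces of $V$, derive uniform bounds in $L^2(0,T;V)$ and in $L^r(Q)$ (it is here that Lemma~\ref{lem:regularisation} licenses inserting $\mathcal R_\eps(u_{i,e})$ as test functions in \eqref{eq:weak-sol-1} to absorb the ionic term through \eqref{eq:h-conseq}), and then pass to the limit by weak compactness together with a strong-compactness argument handling $h(v)$; alternatively, existence also drops out of the convergence of the DDFV scheme proved in Section~\ref{sec:ConvergenceProof}.

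For uniqueness and the $L^2$ estimate, let $(u_i,u_e,v)$ and $(\hat u_i,\hat u_e,\hat v)$ be two solutions with data $v_0,\hat v_0$, and set $U_{i,e}=u_{i,e}-\hat u_{i,e}$, $V=v-\hat v=U_i-U_e$. Subtracting the two copies of \eqref{eq:weak-sol-1} makes $\Iap$ and $s_{i,e}$ cancel; since the Dirichlet data agree, $U_{i,e}\in L^2(0,T;V)$, while $V\in L^r(Q)$ and $\pt V\in L^2(0,T;V')+L^{r'}(Q)$. The delicate step is the choice of test functions: one cannot simply use $U_i$ and $U_e$, because individually they need not lie in $L^r(Q)$ and so are not admissible against the ionic term. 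This is exactly where Lemma~\ref{lem:regularisation} enters: I would test the $u_i$-equation with $\zeta\,\mathcal R_\eps(U_i)$ and the $u_e$-equation with $\zeta\,\mathcal R_\eps(U_e)$, for $\zeta\in\mathcal D([0,T))$ with $\zeta\ge0$, and subtract. By \emph{linearity} of $\mathcal R_\eps$ the two regularised potentials recombine in the ionic term as $\mathcal R_\eps(U_i-U_e)=\mathcal R_\eps(V)$, which converges to $V$ in $L^r(Q)$; securing this recombination and the limit passage is the step I expect to be the main obstacle.

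Letting $\eps\to0$ then yields the energy identity
\[
\int_0^T\langle\pt V,\zeta V\rangle
+\iint_Q \zeta\,\bM_i\Grad U_i\cdot\Grad U_i
+\iint_Q \zeta\,\bM_e\Grad U_e\cdot\Grad U_e
+\iint_Q \zeta\,(h(v)-h(\hat v))\,V=0,
\]
where the diffusion integrals are nonnegative by positive definiteness of $\bM_{i,e}$, and Lemma~\ref{lem:chain-rule} rewrites the first term as $-\iint_Q \tfrac{V^2}{2}\pt\zeta-\int_\Om \tfrac{V_0^2}{2}\zeta(0)$ with $V_0=v_0-\hat v_0$. Assumption \eqref{monotone-bis} gives the pointwise bound $(h(v)-h(\hat v))\,V\ge -L\,V^2$, so discarding the nonnegative diffusion contributions leaves, in $\mathcal D'([0,T))$, a differential inequality of the form $\tfrac{d}{dt}\|V(t)\|_{L^2(\Om)}^2\le 2L\,\|V(t)\|_{L^2(\Om)}^2$. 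Gronwall's lemma then produces the asserted exponential-in-time $L^2$ bound, and uniqueness is the special case $v_0=\hat v_0$.

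Finally, for continuous dependence in $L^r(Q)$, I would let $\zeta$ increase to the indicator of $(0,t)$ in the energy identity to get the genuine energy equality on $(0,t)$, and invoke the reinforced monotonicity \eqref{eq:strict-monot-of-h} in the form $(h(v)-h(\hat v))\,V\ge \tfrac1C(1+|v|+|\hat v|)^{r-2}|V|^2-L\,V^2$. Since $|V|^{r-2}\le(1+|v|+|\hat v|)^{r-2}$ for $r>2$, one has $|V|^r\le(1+|v|+|\hat v|)^{r-2}|V|^2$, so this term controls $\iint_Q|V|^r$ by $\|V_0\|_{L^2(\Om)}^2+\iint_Q|V|^2$; feeding in the $L^2$ bound already obtained turns the right-hand side into a constant multiple of $\|v_0-\hat v_0\|_{L^2(\Om)}^2$, whence $\|v-\hat v\|_{L^r(Q)}\to0$ as $\hat v_0\to v_0$ in $L^2(\Om)$.
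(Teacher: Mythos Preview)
Your argument is correct and mirrors the paper's proof essentially step for step: the same regularised test functions $\zeta\,\mathcal R_\eps(U_{i,e})$, the same reliance on the linearity of $\mathcal R_\eps$ to recombine the ionic term, the chain rule of Lemma~\ref{lem:chain-rule}, Gronwall, and then \eqref{eq:strict-monot-of-h} with $|V|^r\le(1+|v|+|\hat v|)^{r-2}|V|^2$ for the $L^r$ continuity. The only point you leave implicit is that once $V=0$, the diffusion terms in your energy identity force $\Grad U_{i,e}=0$ and hence $U_{i,e}=0$ via the boundary/normalisation constraint, which is how the paper concludes uniqueness of the full triple $(u_i,u_e,v)$.
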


Continuous dependence of the solution
on $\Iap$, $s_{i,e}$, $g_{i,e}$ can be
shown with the same technique, using
in addition the Cauchy-Schwarz
inequality on $(0,T)\times\Gamma_N$
and the trace inequalities for $H^1$ functions.

\begin{proof}
Let $\zeta\in \mathcal D([0,T))$, $\zeta\geq 0$.
We take $\zeta(t)\mathcal R_\eps(u_i-\hat u_i)(t,x)$ as
test function in the first equation of
\eqref{eq:weak-sol-1}, and $\zeta(t)\mathcal
R_\eps(u_e-\hat u_e)(t,x)$ in the second equation of
\eqref{eq:weak-sol-1}. We subtract the resulting equations and
apply the chain rule of Lemma~\ref{lem:chain-rule}; using
the linearity of $\mathcal R_\eps(\cdot)$ and the other
properties listed in Lemma~\ref{lem:regularisation},
and subsequently sending $\eps\to 0$, we finally arrive at
\begin{align*}
&\iint_Q -\frac{(v-\hat v)^2}{2}\,\pt \zeta
-\int_\Om \frac{(v_0-\hat v_0)^2}{2}\, \zeta(0)
+ \iint_Q \Bigl(\,h(v)-h(\hat v)\,\Bigr)\,(v-\hat v)\zeta
\\ & \qquad\quad +\iint_Q \biggl( \bM_i(x)
\Bigl(\Grad u_i-\Grad\hat u_i\Bigr)
\cdot(\Grad u_i-\Grad \hat u_i)
\\ & \qquad\qquad \qquad\qquad
+\bM_e(x)\Bigl(\Grad u_e-\Grad\hat u_e\Bigr)
\cdot(\Grad u_e-\Grad \hat u_e) \biggr)\,\zeta=0.
\end{align*}
For a.e.~$t>0$, we let $\zeta$ converge to the characteristic
function of $[0,t]$. Thanks to the monotonicity assumption
\eqref{monotone-bis} on $\tilde h$, we deduce
\begin{align*}
\int_\Om (v-\hat v)^2(t) & \leq \dsp \int_\Om (v-\hat v)^2(t)
+ \int_0^t\int_\Om \Bigl(\,\tilde h(v)-\tilde h(\hat v)\,\Bigr)\,(v-\hat v)
\\ & \leq \int_\Om (v_0-\hat v_0)^2\,
+ 2L \int_0^t \int_\Om (v-\hat v)^2.
\end{align*}
By the Gronwall inequality, the $L^2$ continuous dependence
property stated in the theorem follows.

Next, if \eqref{eq:strict-monot-of-h} holds, from the H\"older inequality
and the evident estimate
$$
|v-\hat v|^r\leq
\left(|v|+|\hat v|\right)^{r-2}|v-\hat v|^2
\quad \text{(recall $r\geq 2$)},
$$
we infer that $\|v-\hat v\|_{L^r(Q)}$ goes to zero as
$\|v_0-\hat v_0\|_{L^2(\Om)}$ tends to zero.

Finally, if $\hat v_0\equiv v_0$, not only do we
have $v\equiv \hat v$, but also $\hat u_{i,e}=u_{i,e}$ because of
the strict positivity of $M_i$ and the
boundary/normalisation condition in $V$.
\end{proof}

It remains to prove the regularisation result.

\begin{proof}[Proof of Lemma~\ref{lem:regularisation}]
For simplicity we consider separately the two basic cases.\\[3pt]
$\bullet$ Pure Dirichlet BC case.\\[3pt]
Extend $z$ by zero for $t\notin (0,T)$.
Take a standard family of mollifiers $(\rho_\eps)_{\eps>0}$ on
$\R^{d+1}$ supported in the ball of radius $\eps$ centred at the origin.
Introduce the set $\Om_\eps:=\{x\in\Om\,|\,
\dist(x,\ptl\Om)<\eps\}$. Take $\theta_\eps$ such that
$\theta_\eps\in \mathcal D(\Om)$, $\theta_\eps\equiv 1$ in
$\Om\setminus\Om_\eps$, $0\leq \theta_\eps\leq 1$, and
$\|\grad \theta_\eps\|_{L^\infty(\Om)}\leq \const/\eps$. Define
$$
\mathcal R_\eps(z)(t,x):=
\Bigl(\rho_\eps(t,x)\Bigr)*
\Bigl(\theta_\eps(x)\, z(t,x)\Bigr).
$$
By construction, $\mathcal R_\eps$
maps $L^1(Q)$ to $C^\infty(\R\times\R^d)$.
From standard properties of mollifiers and the
absolute continuity of the Lebesgue integral, one easily deduces
that if $z\in L^r(Q)$, then $z_\eps:=\mathcal R_\eps(z)$ converges
to $z$ in $L^r(Q)$ as $\eps\to 0$. Next, consider $z\in L^2(0,T;V)$.
We have $z\in L^2(Q)$, and thus $z_\eps\to z$ in $L^2(Q)$ as
above. In particular, $(z_\eps)_{\eps>0}$ is bounded in
$L^2(Q)$. Similarly, $ \rho_\eps*(\theta_\eps \grad z)$ is
bounded in $L^2(Q)$ and converges
to $\grad z\equiv \grad z$ in $L^2(Q)$. 
Since $\grad z_\eps=\rho_\eps*(\theta_\eps\,\grad z)
+ \rho_\eps*(\grad \theta_\eps\, z), $ it remains to show that 
$\rho_\eps*(\grad \theta_\eps\, z)$ converges to
zero in $L^2(Q)$ as $\eps\to 0$. By standard properties of mollifiers, it is 
sufficient to prove that $\grad \theta_\eps\,  z \to 0$ in 
$L^2(\R^d)$ as $\eps\to 0$, which follows from an appropriate 
version of the Poincar\'e inequality.
 
Indeed, in the case $\ptl \Om$ is Lipschitz regular, we can 
fix $\eps_0>0$ and cover $\Om_{\eps_0}$ by a finite
number of balls $({\mathcal O}_i)_{i\in I}$ (eventually rotating the 
coordinate axes in each ball) such that for all $i\in I$, for all $\eps<\eps_0$ the 
set $\Om_\eps\cap {\mathcal O}_i$ is contained in the strip
$\{\Psi_i(x_2,x_3)< x_1 < \Psi_i(x_2,x_3) +C\eps\}$ for some 
Lipschitz continuous function $\Psi_i$ on $\R^2$
and some $C>0$. Hence by the standard Poincar\'e inequality 
in domains of thickness $\eps$, we have $
\|z(t,\cdot)\|_{L^2(\Om_\eps)} \leq C\eps \,\|\grad z(t,\cdot)\|_{L^2(\Om_\eps)}$. Then
$$
\int_0^T\!\!\!\int_{\Omega_\eps}|\grad\theta_\eps\,z|^2\leq
\frac{\const}{\eps^2}\int_0^T\!\!\!\int_{\Omega_\eps}|z|^2\leq
\frac{\const}{\eps^2}C\eps^2\int_0^T\!\!\!\int_{\Omega_\eps}|\grad z|^2,
$$
and the right-hand side converges to zero as $\eps\to 0$, by the
absolute continuity of the Lebesgue integral.\\[3pt]
$\bullet$ Pure Neumann BC case.\\[3pt]
We use a linear extension operator $\mathcal E$ from $V$ into
$H^1(\R^d)$ such that $V\cap L^r(\Om)$ is mapped into
$H^1(\R^d)\cap L^r(\R^d)$. Such an operator is constructed in a
standard way, using a partition of unity, boundary rectification
and reflection (see, e.g., Evans \cite{Evans:1998sm}). We then
define $\mathcal R_\eps$ by the formula
$\mathcal R_\eps(z)=\rho_\eps*(\mathcal E (z))$.\\[3pt]
$\bullet$ The general case: mixed Dirichlet-Neumann BC.\\[3pt]
It suffices to define $\Om_\eps:=\{x\in\Om\,|\,
\dist(x,\Gamma_D)<\eps\}$, introduce $\theta_\eps$ as in the
Dirichlet case, introduce $\mathcal E$ as in the Neumann case,
and take $\mathcal R_\eps(z)=\rho_\eps*(\theta_\eps {\mathcal E}(z))$.
\end{proof}

\begin{rem}\label{rem:spaceE}\rm
We have seen that the following space  appears naturally:
\begin{equation*}
E:=\Bigl\{(u_i,u_e)\;
\Bigl|\;\,u_{i,e}-g_{i,e}\in L^2(0,T,V), \;
v:=u_i-u_e\in L^r(Q) \Bigr\}.
\end{equation*}
Introducing its dual $E'$ and the corresponding duality pairing
$\llangle \cdot,\cdot \rrangle$; we have
 $$
 \llangle (\chi,\xi) \,,\, (\ph,\psi) \rrangle
 =\lim_{\eps\to 0} \langle\chi,\mathcal R_\eps \ph\rangle
 +\langle\xi,\mathcal R_\eps \ph \rangle
$$
 whenever the limit exists.

Now, using Remark~\ref{rem:AL-def} and
Lemma~\ref{lem:regularisation}, it is not difficult to
recast Problem  \eqref{S1-p},\eqref{S2},\eqref{S3} into
the following formal framework:
\begin{align*}
&\text{find $(u_i,u_e)\in E$ such that $(\ptl_t v,-\ptl_t v)\in E'$
and \eqref{S1-p},\eqref{S2},\eqref{S3} hold in $E'$},
\\ & \text{namely, for all $ (\ph,\psi)\in E$,}\\
&\int_0^T \llangle (\ptl_t v,-\ptl_t v)\,,\,(\ph,\psi) \rrangle
\\ & \qquad
+ \iint_Q \biggl( M_i(x,\Grad u_i)\cdot\Grad \ph
- M_e(x,\Grad u_e)\cdot\Grad \psi
+h(v)(\ph-\psi)\biggr)\\ &\qquad\qquad
-\int_0^T\!\!\!\int_{\Gamma_{\!\!N}} \Bigl(s_i\,\ph-s_e\,\psi)
 = \iint_Q \Iap(\ph-\psi),\\
&\text{and for all  $(\ph,\psi)\in E$ such that
$\ptl_t \ph,\ptl_t\psi\in L^\infty(Q)$ and $\ph(T,\cdot)=0=\psi(T,\cdot)$,}\\
&\int_0^T  \llangle (\pt v,-\pt v) \,,\, (\ph,\psi) \rrangle
= -\iint_Q  v\,\pt(\ph-\psi)- \int_\Om
v_0(\cdot)\,(\ph(0,\cdot)-\psi(0,\cdot)).
\end{align*}
\end{rem}
In view of Remarks \ref{rem:AL-def} and \ref{rem:spaceE}, we can
apply some of the techniques used by Alt and Luckhaus \cite{AL} to
deduce an existence result from the uniform boundedness in $E$
of the Galerkin approximations of our problem (cf. \cite{coudiere1}). The
uniform bound in $E$ is obtained using
the chain rule of Lemma~\ref{lem:chain-rule}, the Gronwall inequality
and the assumptions \eqref{eq:h-conseq},\eqref{monotone-bis}
on the ionic current. The arguments of the existence proof
will essentially be reproduced in Section~\ref{sec:ConvergenceProof};
therefore we omit the details here.

In view of the uniqueness and continuous dependence
result of Theorem~\ref{theo:L^2-contraction} and its proof, we
can end this section by stating a well-posedness result.

\begin{theo}\label{theo:existence}
Assume \eqref{eq:Neumann-compatibility}, \eqref{eq:h-conseq}
and \eqref{monotone-bis}. There exists one and only one solution to
Problem \eqref{S1-p},\eqref{S2},\eqref{S3}.
If in addition \eqref{eq:strict-monot-of-h} holds, then the
solution depends continuously in the space
$E$ on the initial datum in $L^2(\Om)$.
\end{theo}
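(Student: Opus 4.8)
The plan is to read off uniqueness and continuous dependence directly from Theorem~\ref{theo:L^2-contraction}, so that the only substantive work is an existence proof; I would construct a solution by the Faedo--Galerkin method within the variational framework of Remarks~\ref{rem:AL-def} and~\ref{rem:spaceE}. First I would fix an increasing sequence of finite-dimensional subspaces $V_m\subset V\cap L^\infty(\Om)$ whose union is dense in $V$ and in $V\cap L^r(\Om)$, and seek approximate potentials $u_{i,e}^m$ with $u_{i,e}^m-g_{i,e}\in V_m$ for a.e.\ $t$, solving the two equations of~\eqref{eq:weak-sol-1} tested against $V_m$. This reduces the problem to a finite system of ODEs for the Galerkin coefficients; Carath\'eodory's theorem yields local-in-time solvability, which the a priori bounds below promote to a global solution. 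In the normalised case $\Gamma_D=\O$ I would work in the quotient space $V_0$, use the compatibility condition~\eqref{eq:Neumann-compatibility} to guarantee solvability, and fix the additive constant by~\eqref{eq:Ui-normalized}.

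The a priori estimates are the heart of the matter. I would test the first equation with $u_i^m-g_i$ and the second with $u_e^m-g_e$ and subtract, so that the time-derivative contributions combine into $\int_0^T\langle\pt v^m,v^m\rangle$, up to the smooth correction coming from $g_{i,e}$. The chain rule of Lemma~\ref{lem:chain-rule} converts this into $\tfrac12\|v^m(t)\|_{L^2(\Om)}^2-\tfrac12\|v_0\|_{L^2(\Om)}^2$; the coercivity of $\bM_{i,e}$ controls $\|\grad u_{i,e}^m\|_{L^2(Q)}$ from below, and the lower bound in~\eqref{eq:h-conseq} controls $\|v^m\|_{L^r(Q)}^r$. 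After absorbing the $g_{i,e}$, $s_{i,e}$ and $\Iap$ contributions by Young's and the $H^1$ trace inequalities and invoking Gronwall, this yields uniform bounds of $v^m$ in $L^\infty(0,T;L^2(\Om))\cap L^r(Q)$ and of $u_{i,e}^m-g_{i,e}$ in $L^2(0,T;V)$, i.e.\ a uniform bound in $E$. The upper bound in~\eqref{eq:h-conseq} then bounds $h(v^m)$ in $L^{r'}(Q)$, and reading off the equation bounds $\pt v^m$ in $L^2(0,T;V')+L^{r'}(Q)$.

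I would next extract weakly convergent subsequences: $u_{i,e}^m\weak u_{i,e}$ in $L^2(0,T;V)$ (after subtracting $g_{i,e}$), $v^m\weakstar v$ in $L^\infty(0,T;L^2(\Om))$ and $v^m\weak v$ in $L^r(Q)$, and $h(v^m)\weak\chi$ in $L^{r'}(Q)$. The delicate point, and the main obstacle, is the \emph{degenerate} structure: only the difference $v=u_i-u_e$ carries a time derivative, so no direct compactness is available for $u_i^m,u_e^m$ individually. I would therefore apply an Aubin--Simon / Alt--Luckhaus compactness argument to the single variable $v^m$, using the bound on $\pt v^m$ in $L^2(0,T;V')+L^{r'}(Q)$ together with the bound on $v^m$ in $L^2(0,T;V)$ and the compact embedding $V\hookrightarrow L^2(\Om)$, to obtain strong convergence $v^m\to v$ in $L^2(Q)$ and a.e.\ in $Q$. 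Continuity of $h$ together with the $L^{r'}$ bound then identify $\chi=h(v)$, either by a.e.\ convergence plus equi-integrability or by a Minty-type argument exploiting the monotonicity of $\tilde h$ in~\eqref{monotone-bis}.

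Passing to the limit in the linear diffusion and boundary terms is routine, so the weak formulation~\eqref{eq:weak-sol-1} holds for $(u_i,u_e,v)$, and the recovery of $v(0,\cdot)=v_0$ follows by testing with functions that do not vanish at $t=0$ and comparing with the integrated identity of Remark~\ref{rem:AL-def}. This establishes existence. Uniqueness is immediate from Theorem~\ref{theo:L^2-contraction}, and under the extra hypothesis~\eqref{eq:strict-monot-of-h} the continuous dependence of $v$ on $v_0$ in $L^r(Q)$, combined with the $L^2$ estimate and the strict positivity of $\bM_{i,e}$ used in that theorem to recover $u_{i,e}$ from $v$, gives continuous dependence in the full space $E$, completing the well-posedness statement.
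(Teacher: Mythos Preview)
Your proposal is correct and follows essentially the same route the paper outlines: uniqueness and continuous dependence are taken from Theorem~\ref{theo:L^2-contraction} (and its proof, for the $\grad u_{i,e}$ part needed to get continuity in $E$), while existence is obtained via Faedo--Galerkin approximations, uniform $E$-bounds from the chain rule of Lemma~\ref{lem:chain-rule}, Gronwall and the hypotheses on $h$, and an Alt--Luckhaus-type compactness argument on $v^m$. The paper does not spell out these details here but explicitly defers them, noting that the same arguments reappear in the convergence proof of Section~\ref{sec:ConvergenceProof}; your write-up is in fact more detailed than what the paper provides.
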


\section{The framework of DDFV schemes}
\label{sec:DDFV}

We make an idealisation of the heart by assuming that it
occupies a polyhedral domain $\Omega$ of $\R^3$. We discretise the
diffusion terms in \eqref{S1-p} using the implicit Euler
scheme in time and the so-called Discrete Duality
Finite Volume (DDFV) schemes in space. The DDFV
schemes were introduced for the discretisation of linear diffusion
problems on $2D$ unstructured, non-orthogonal meshes by Hermeline
\cite{Herm0,Herm} and by Domelevo and Omn\`es \cite{DomOmnes}.
They turned out to be well suited for approximation of anisotropic
and heterogeneous linear or non-linear diffusion problems.

Our application requires a 3D analogue of the 2D DDFV schemes.
Three versions of such 3D DDFV schemes have already been developed;
we shall refer to them as $(A)$, $(B)$ and $(C)$.
We refer to \cite{Pierre,Pierre-FVCA5} for version $(A)$;
version $(B)$ that we describe in Section~\ref{ssec:meshingB} below
was developed in \cite{Herm3,Herm4} and \cite{ABK-FVCA5,ABK,AndrBend};
we refer to \cite{CoudiereHubert} for version $(C)$.
In this paper, we show the convergence of any of
these schemes, using only general properties of DDFV approximations.

\subsection{Generalities}

In the 3D DDFV approach of \cite{Pierre,Pierre-FVCA5} (version $(A)$)
and in the one of \cite{ABK-FVCA5,ABK,AndrBend}, \cite{Herm4,Herm3}
(version $(B)$), the meshes consist
of control volumes of two kinds, the primal and the dual ones.
Version $(C)$ also includes a third mesh. For cases $(B)$ and $(C)$, primal 
volumes and dual volumes form two partitions of $\Omega$, up to a set of
measure zero. In case $(A)$, the primal volumes form a partition 
of $\Om$, and the dual volumes cover $\Om$
twice, up to a set of measure zero.
 Some of the dual and primal volumes are considered as ``Dirichlet boundary'' volumes,
while the others are the ``interior'' volumes (this includes the volumes located near the Neumann part
$\Gamma_N$ of $\ptl\Om$). With each (primal or dual) interior 
control volume we associate unknown values for
$u_i,u_e,v$; Dirichlet boundary conditions are imposed on the boundary volumes. 
The Neumann boundary conditions
will enter the definition of the discrete divergence operator near the 
boundary; it is convenient to take them
into account by introducing additional unknowns associated 
with ``degenerated primal volumes'' that are parts of
the Neumann boundary $\Gamma_N$.

We consider the space $\R^\ptTau$  of discrete functions on $\Om$; 
a discrete function $u^\ptTau\in
\R^\ptTau$ consists of one real value per interior control volume.
On $\R^\ptTau$ an appropriate inner
product $\Bleft\cdot,\cdot\Bright_{\Om}$ is introduced, which 
is a bilinear positive form.

Both primal and dual volumes define a partition of $\Om$ into
diamonds, used to represent discrete gradients
and other discrete fields on $\Om$. The space $(\R^\ptDrond)^3$ of discrete
fields on $\Om$ serves to define the fluxes through the boundaries
of control volumes. A discrete field $\vec \Frond^\ptTau \in
(\R^\ptDrond)^3$ on $\Om$ consists of one $\R^3$ value per
``interior'' diamond. On $(\R^\ptDrond)^3$ an
appropriate inner product $\Aleft\cdot,\cdot,\Aright_{\Om}$ is introduced.

A discrete duality finite volume scheme is determined by the mesh,
the discrete divergence operator $\div_{\!\!
s^\ptTau}^{\!\ptTau}:(\R^\ptDrond)^3\longrightarrow \R^\ptTau$
obtained by the standard finite volume discretisation procedure
(with values $s^\ptTau$ given by the Neumann 
boundary condition on $\Gamma_N$),
and by the associated discrete gradient operator.
More precisely, the discrete gradient operator
$\grad_{\!g^{\!\ptTau}}^\ptTau : \R^\ptTau\longrightarrow
(\R^\ptDrond)^3$
is defined on the space of discrete functions
extended by values $g^\ptTau$
in volumes adjacent to $\Gamma_D$; it is defined in
such a way that the discrete duality property holds:
\begin{equation}\label{eq:discr-duality}
\forall v\in \R^\ptTau, \; \forall \vec
\Frond^\ptTau\in(\R^\ptDrond)^3,\; \quad \Bleft
-\div_{\!\!s^\ptTau}^{\!\ptTau} \vec \Frond^\ptTau,
v^\ptTau\Bright_{\Om} = \Aleft \vec
\Frond^\ptTau, \grad^\ptTau_0
v^\ptTau \Aright_{\Om}
+ \Cleft s^\ptTau,v^{\ptl\ptTau} \Cright_{\Gamma_N}.
\end{equation}
Here $\grad^\ptTau_0$ corresponds to the homogeneous 
Dirichlet boundary condition\footnote{our
notation follows \cite{ABH``double''}; a slightly different viewpoint was
used in \cite{ABK-FVCA5,ABK,AndrBend}, where the homogeneous Dirichlet
boundary data were included into the definition of the space
$\R^\ptTau_0$ of discrete functions defined also on the control
volumes adjacent to $\Gamma_D\equiv \ptl\Om$.} $g^\ptTau=0$ on $\Gamma_D$, and
$s^\ptTau$ denotes the discrete Neumann boundary
datum for $\vec \Frond^\ptTau\cdot n$.
Further, $\Cleft \cdot,\cdot \Cright_{\Gamma_N}$ denotes an
appropriately defined product on the
Neumann part $\Gamma_N$ of the boundary $\ptl\Om$, and
$v^{\ptl\ptTau}$ denotes  the boundary  values on $\Gamma_N$ of
$v^\ptTau$. The precise definitions of these objects are
given below for version $(B)$.

In \cite{Pierre,Pierre-FVCA5} and 
\cite{ABK-FVCA5,ABK,AndrBend},\cite{Herm3,Herm4}, the
definitions of dual volumes and $\Bleft\cdot,\cdot\Bright_{\Om}$ differ; but
both methods can be analysed with the same formalism.
The construction in \cite{CoudiereHubert} only differs by its
use of three meshes based on three kinds of control volumes.
This also changes the definition of  $\Bleft\cdot,\cdot\Bright_{\Om}$. 
The main difference between the three frameworks lies in the 
interpretation of $u^\ptTau\in\R^\ptTau$ in terms of functions.
In each case $u^\ptTau\in\R^\ptTau$  is thought as a piecewise constant function.
The three following lifting between $\R^\ptTau$ and $L^1(\Om)$ are considered: 
\begin{equation}\label{eq:DiscreteSol}
u^\ptTau:=\left\{
\begin{array}{ll}
\dsp \frac{1}{3} v^\ptMrond\!\!+\!\frac{1}{3} v^\ptdMrond & \text{for version $(A)$ described in \cite{Pierre,Pierre-FVCA5} }\\[9pt]
\dsp \frac{1}{3} v^\ptMrond\!\!+\!\frac{2}{3}
v^\ptdMrond &  \text{for version $(B)$ described in
\cite{ABK-FVCA5,ABK,AndrBend}, \cite{Herm3,Herm4}}\\[9pt]
\dsp \frac{1}{3} v^\ptMrond\!\!+\!\frac{1}{3} v^\ptdMrond\!+\!\frac{1}{3}v^{\scriptstyle \ptdiezMrond}&
\text{for version $(C)$ described in \cite{CoudiereHubert}},
\end{array}
\right.
\end{equation}
with $v^\ptMrond$ and $v^\ptdMrond$ representing the discrete solutions on the 
primal and the dual mesh, respectively,
and with $v^{\scriptstyle \ptdiezMrond}$ (in the scheme 
of \cite{CoudiereHubert}) representing the solution
on the third mesh. We have for instance $v^\ptMrond(x)=
\sum_{\ptK\in\ptMrond} v_\ptK\,\char_\ptK(x)$ (with $ \char_\ptK$ the 
characteristic function of $\ptK$), the 
definitions of$v^\ptdMrond$, $v^{\scriptstyle \ptdiezMrond}$ are analogous.

 In all the three cases, appropriate definitions of the spaces
$\R^\ptTau,(\R^\ptDrond)^3$,
the scalar products $\Bleft\cdot,\cdot\Bright_{\Om},
\Aleft\cdot,\cdot\Aright_{\Om},\Cleft \cdot,\cdot \Cright_{\Gamma_N}$, and the
operators $\div^{\!\ptTau},\grad^\ptTau$, lead to the discrete
duality property \eqref{eq:discr-duality}.

\subsection{A description of version $(B)$}\label{ssec:meshingB}

In this subsection we describe the objects and the associated discrete gradient
and divergence operators for version $(B)$ of the scheme.
More details and generalisations can be found in \cite{AndrBend}. \medskip

\subsubsection{Construction of ``double'' meshes}~
\medskip

\noindent $\bullet$ A {\it partition} of $\Om$ is a finite set of
disjoint open  polyhedral subsets of $\Om$ such that $\Om$ is
contained in their union, up to a set of zero three-dimensional
measure.

A ``double'' finite volume mesh of $\Om$ is a triple
$\Tau=\Bigl(\overline{\Mrond},\overline{\dMrond},\Drond \Bigr)$
described in what follows.

\medskip
\noindent $\bullet$ First, let $\MrondOm$  be a partition of $\Om$ into
open polyhedral with triangular or quadrangular faces. We assume them convex.
Assume that $\ptl\Om$ is the disjoint union of polygonal parts
$\Gamma_D$ (for the sake of being definite,
we assume it to be closed) and $\Gamma_N$ (that we therefore assume to be open).
Then we require that each face of the polyhedra in $\MrondOm$
either lies inside $\Om$, or it lies  in $\Gamma_D$, or it lies
in $\Gamma_N$ (up to a set of zero two-dimensional measure).
Each $\K\in\MrondOm$ is called a {\it primal control volume} and
is supplied with an arbitrarily chosen {\it
centre} $\xK$; for simplicity, we assume $x_\ptK\in\K$.

Further, we call $\MrondGN$ (respectively, $\ptl\Mrond$)
the set of all faces of control volumes that are
included in $\Gamma_N$ (resp., in $\Gamma_D$).
These faces are considered as degenerate control 
volumes; those of $\ptl\Mrond$ are called {\it boundary primal volumes}.
For $\K\in\MrondGN$ or $\K\in\ptl\Mrond$, we choose a centre
$\xK\in \K$.

Finally, we denote $\Mrond:=\MrondOm\cup\MrondGN$; $\Mrond$ is 
the set of {\it interior primal volumes}; and we denote by $\overline{\Mrond}$ the union
$\Mrond\cup\ptl\Mrond\equiv \Bigl(\MrondOm\cup\MrondGN\Bigr)\cup \ptl\Mrond$.

\medskip
\noindent $\bullet$ We call {\it neighbours} of $\K$, all control
volumes $\L\in\overline{\Mrond}$  such that $\K$ and $\L$ have a
common face (by convention, a degenerate 
volume $\K\in \MrondGN$ or $\K\in\ptl\Mrond$ has a unique face,
which coincides with the degenerate volume itself).
 The set of all neighbours of $\K$ is denoted by
$\Nrond(\K)$. Note that if $\L\in \Nrond(\K)$, then
$\K\in\Nrond(\L)$; in this case we simply say that $\K$ and $\L$
are (a couple of) neighbours.  If $\K$,$\L$ are  neighbours, we denote by
$\KIL$ the {\it interface (face)} 
$\ptl\K\cap\ptl\L$ between  $\K$ and $\L$.

\medskip
\noindent $\bullet$ We call {\it vertex} (of $\MrondOm$) any
vertex of any control volume $\K\in\MrondOm$.
A generic vertex  of $\MrondOm$ is denoted by
$\xdK$; it will be associated later with a unique {\it dual
control volume} $\dK\in \overline{\dMrond}$. Each face $\KIL$ 
is supplied with a {\it face centre} $x_\ptKIL$ which should lie
in  $\KIL$ (the more general situation is described in \cite{AndrBend}). For
two {\it neighbour vertices} $\xdK$ and $\xdL$ (i.e., vertices
of $\Mrond$ joined by an edge of some interface $ \KIL$
or boundary face), we denote by $\xdKIdL$ the
middle-point of the segment $[\xdK,\xdL]$.

\medskip
\noindent $\bullet$ Now if $\K\in \overline{\Mrond}$ and
$\L\in\Nrond(\K)$, assume $\xdK,\xdL$ are two neighbour vertices
of the interface $\KIL$. We denote by $\T^{\ptK}_{\ptdK,\ptdKIdL}$
the tetrahedra formed by
 the points $\xK, \xdK, \xKIL,\xdKIdL$. A generic tetrahedra
$\T^{\ptK}_{\ptdK,\ptdKIdL}$ is called an {\it element} of the
mesh and is denoted by $\T$ (see Figure~\ref{Fig-PureSubdiamond}); the
set of all elements is denoted by $\Trond$.

\begin{figure}[bth]
\begin{center} 
\includegraphics[width=2.4in]{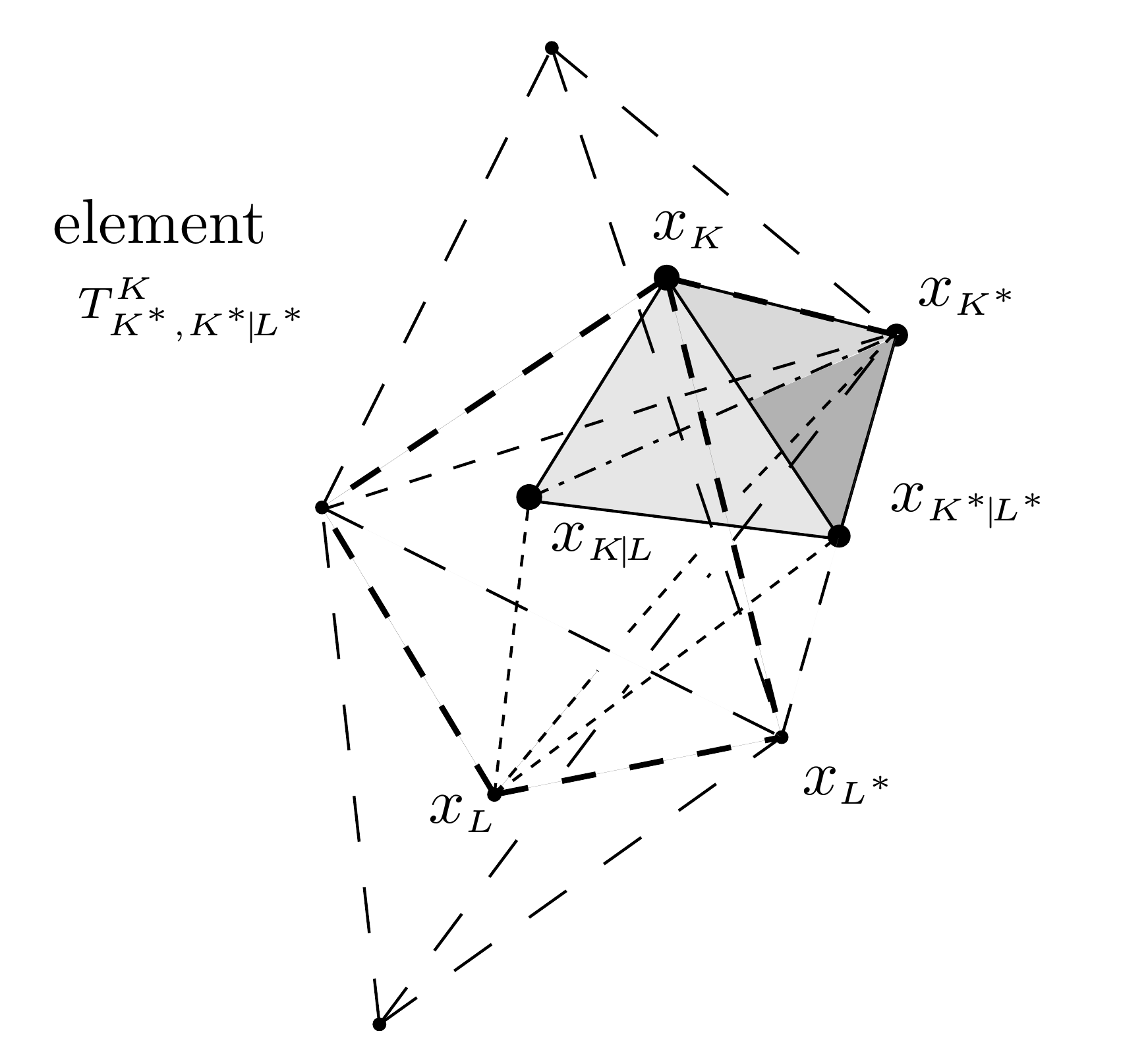}
\end{center}
\caption{ One of the twelve elements in diamond
$\DM^{\ptKIL}$ with
\hspace*{55pt}~triangular base $\KIL$}
\label{Fig-PureSubdiamond}
\end{figure}

\medskip
\noindent $\bullet$
Define the volume $\dK$ associated with  a vertex $\xdK$ of
$\MrondOm$ as the union of all elements ${\T}\in\Trond$ having $\xdK$
for one of its vertices. The collection $\overline{\dMrond}$ of
all such $\dK$ forms another partition of $\Om$.
If $\xdK\in\Om\cup \Gamma_N$, we say that $\dK$ is
an (interior) {\it dual control volume} and
write $\dK\in\dMrond$; and if $\xdK\in {\Gamma_D}$, we say that $\dK$
is a {\it boundary dual control volume} and write
$\dK\in\ptl\dMrond$. Thus $\overline{\dMrond}=\dMrond\cup
\ptl\dMrond$. Any vertex of any dual control volume $\dK\in\overline{\dMrond}$
is called a {\it dual vertex} (of $\overline{\dMrond}$).
Note that by construction, the set of vertices coincides with the set of
dual centres $\xdK$; the set of dual vertices consists of centres
$\xK$, face centres $x_\ptKIL$ and edge centres (middle points)
$\xdKIdL$. Picturing dual volumes in 3D is a hard task; cf. \cite{Pierre} for
version $(A)$ and \cite{CoudiereHubert} for  version $(C)$.

\medskip
\noindent $\bullet$ We denote by $\dNrond(\dK)$ the set of {\it
(dual) neighbours} of a dual control volume $\dK$, and by
$\dKIdL$, the {\it (dual) interface} $\ptl\dK\cap\ptl\dL$ between
dual neighbours $\dK$ and $\dL$.

\medskip
\noindent  $\bullet$ Finally, we introduce the partitions of $\Om$
into diamonds and subdiamonds. If $\K,\L\in\overline{\Mrond}$ are neighbours,
let $H_\ptK$ be the convex hull of $\xK$ and $\KIL$ and $H_\ptL$ be the 
convex hull  of $\xL$ and $\KIL$. Then the union $H_\ptK\cup H_\ptL$ 
is called a {\it diamond} and is
denoted by $\DM^{\ptKIL}$.

If $\K,\L\in \overline{\Mrond}$ are neighbours, and $\xdK,\xdL$
are neighbour vertices of the corresponding interface $\KIL$, then
the union of the four elements
$\T^{\ptK}_{\ptdK,\ptdKIdL}$, $\T^{\ptK}_{\ptdL,\ptdKIdL}$,
$\T^{\ptL}_{\ptdK,\ptdKIdL}$,  and $\T^{\ptL}_{\ptdL,\ptdKIdL}$ is
called {\it subdiamond} and denoted by
$\SDM^\ptKIL_\ptdKIdL$. In this way, each diamond
$\DM^\ptKIL$ gives rise to $l$ subdiamonds (where $l$ is the
number of vertices of $\KIL$); cf. the next item and Fig.~\ref{Fig-3D}. 
Each subdiamond is associated with
a unique interface $\KIL$, and thus with a unique diamond
$\DM^\ptKIL$. We will write $\SDM\subset \DM$ to signify that $\SDM$
is associated with $\DM$.

We denote by  $\Drond$,$\Srond$ the sets of all
diamonds and the set of all subdiamonds, respectively. Generic
elements of $\Drond$,$\Srond$ are denoted by $\DM$,$\SDM$,
respectively. Notice that $\Drond$ is a partition of a subdomain of
$\Omega$ (only a small neighbourhood of
$\Gamma_N$ in $\Om$ is not covered by diamonds).
\begin{figure}[h]
\begin{center} 
\includegraphics[width=5in]{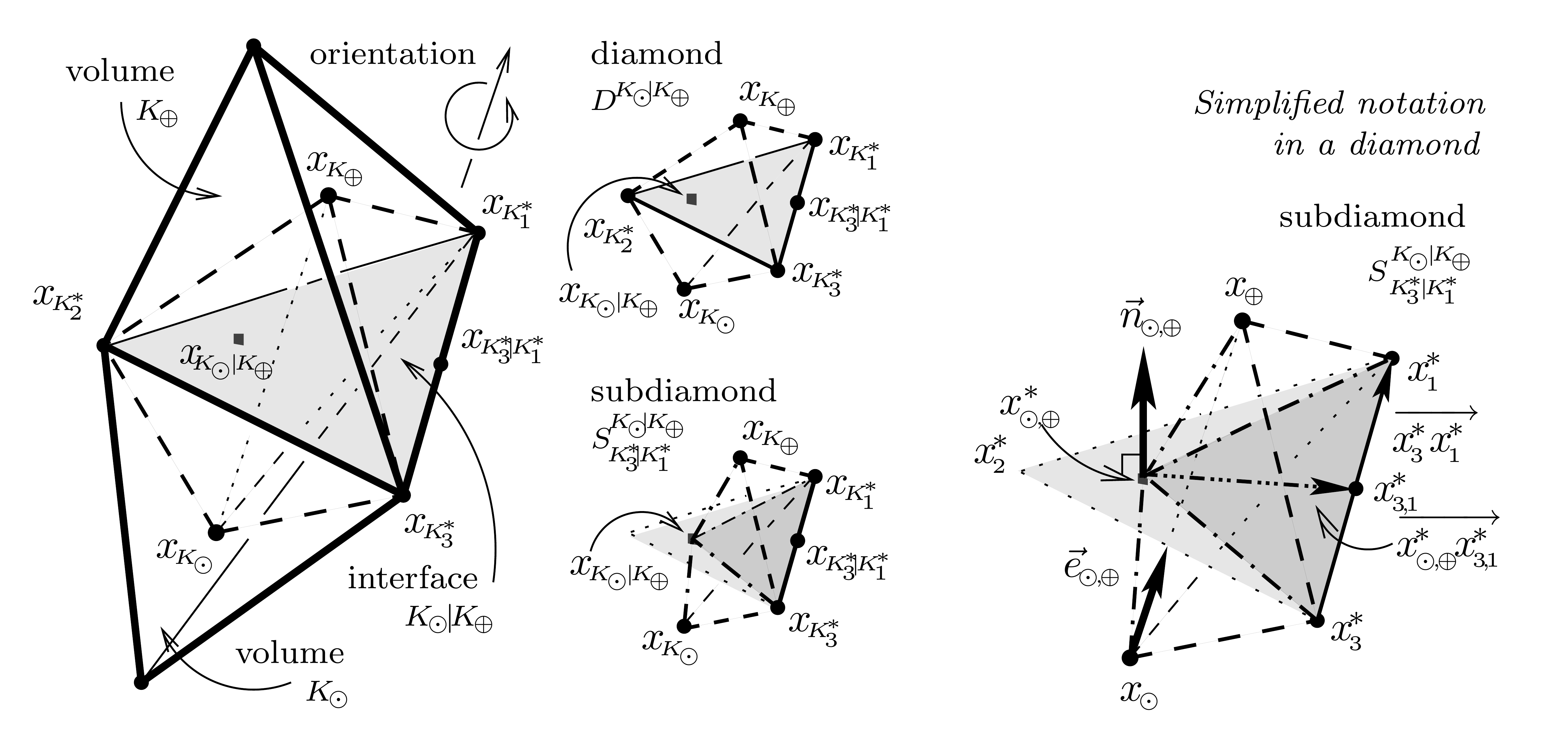}
\end{center}
\caption{Primal volumes, diamond; subdiamond\,and 
zoom\,on\,it \hspace*{45pt}~(Version $\!(B)$ of 3D DDFV mesh) } \label{Fig-3D}
\end{figure}

\medskip
\noindent $\bullet$ (See Figure~\ref{Fig-3D}) The following notations are only
needed for an explicit expression of
the discrete divergence operator (and also for the proof of
the discrete duality given in \cite{AndrBend}). It is convenient
to orient the axis $\xK\xL$ of each diamond $\DM$.
Whenever the orientation is of importance, the primal
vertices defining the diamond will be denoted by
$\xKdot,\xKplus$ in such a way that the vector
$\overrightarrow{\xKdot\xKplus}$ has the positive orientation.
The oriented diamond is then denoted by $\DM^{\ptKdotIKplus}$.
We denote by $ \nuKdotKplus$ the corresponding unit
vector, and by $ \dKdotKplus$, the length of $\overrightarrow{\xKdot\xKplus}$.
We denote by $ \NKdotIKplus$ the unit normal vector to
$\KdotIKplus$ such that $\NKdotIKplus\cdot \nuKdotKplus >0$.

Fixing the normal $\NKdotIKplus$ of $\KdotIKplus$
induces an orientation of the corresponding face
$\KdotIKplus$, which is a convex polygon with $l$ vertices
(we only use $l=3$ or $4$): we denote the
vertices of $\KdotIKplus$ by $\xdKi$, $i\in
[\![1,l]\!]$, enumerated in the direct sense. By convention, we
assign $x_{\ptK^{\!*}_{\!l\!+\!1}}:=x_{\ptK^{\!*}_{1}}$. We denote
by $\nudKidKiun$ the unit normal vector pointing from $\xdKi$
towards $\xdKiun$, and by $ \dKidKiun$, the length of
$\overrightarrow{\xdKi\xdKiun}$.

To simplify the notation, we will drop the $\K$'s in the
subscripts and denote the objects introduced above by
$\xdot$,$\xplus$,$\nudotplus$,$\ddotplus$,$\Ndotplus$ and by
$x^*_i$,$\nuiiun$,$\diiun$ whenever $\DM^{\ptKdotIKplus}$ is
fixed. We also denote by $\xidemi$ the middle-point
$x_\ptdKiIdKiun$ of the segment $[x_i,x_{\!i\hspace*{-1pt}+\!1}]$,
and by $\xdotplus$, the centre $x_\ptKdotIKplus$ of
$\KdotIKplus$.

\medskip
\noindent $\bullet$ For a diamond $\DM=\DM^{\ptKdotIKplus}$, we
denote by $\ProjD$ the orthogonal projection of $\R^3$
onto the line spanned by the vector $\nuKdotKplus$; we denote by $\dProjD$  the
orthogonal projection of $\R^3$ onto the plane containing
the interface $\KdotIKplus$.

\medskip
\noindent $\bullet$ We denote by $\Vol(A)$ the three-dimensional
Lebesgue measure of $A$ which can stand for a control volume, a
dual control volume, or a diamond. In particular, for 
$\K\in\MrondGN$, $\Vol(\K)=0$: these volumes are degenerate.
For a subdiamond
$\SDM=\SDM^{\ptKdotIKplus}_{\ptdKiIdKiun}$,
 we have the formula $\Vol(\SDM)=\frac 16
\,\langle\,\overrightarrow{\xdot\xplus},\overrightarrow{
\xdotplus\xidemi }, \overrightarrow{x^*_i x^*_{i\hspace*{-1pt}+\!1}} \,\rangle$.
Note the mixed product is positive, thanks to
our conventions on the orientation in $\DM^\ptKdotIKplus$ and
because we have assumed that $\xdotplus\in \KdotIKplus$.

\begin{rem}\rm \label{rem:Diamonds,Subdiamonds}
Diamonds permit to define the discrete
gradient operator, while subdiamonds permit to
give formulas for the discrete divergence operator (see \eqref{DiscrGrad},
\eqref{DiscrGradD3} and \eqref{DiscrDiv},
\eqref{DiscrDivM} below, respectively).

In the context of $2D$ ``double'' schemes, introducing
diamonds is quite standard (see, e.g., \cite{ABH``double'',DomOmnes}).
Subdiamonds are ``hidden'' in the $2D$ construction : they
actually coincide with diamonds.
\end{rem}

\medskip
\subsubsection{Discrete functions, fields, and boundary data.}~\\[-5pt]
\label{ssec:fctsfieldsdata}

\noindent $\bullet$
A {\it discrete function} $w^\ptTau$ on ${\Om}$ is a pair
$\Bigl(\wMrond,\wdMrond\Bigr)$ consisting of two sets of
real values $w^\ptMrond=(w_\ptK)_{\ptK\in\ptMrond}$ and
$w^\ptdMrond=(w_\ptdK)_{\ptdK\in\ptdMrond}$. The set of all such
functions is denoted by $\R^\ptTau$.

\medskip\noindent
$\bullet$ A {\it discrete field} $\vec \Frond^\ptTau$
on $\Om$ is a set $\Bigl( \vec {\mathcal F}_\ptD
\Bigr)_{\ptD\in\ptDrond}$ of vectors of $\R^d$. The set of all
discrete fields is denoted by $(\R^d)^\ptDrond$.
 If $\vec\Frond^\ptTau$ is a discrete field on
$\Om$, we assign $\vec \Frond_\ptS=\vec \Frond_\ptD$ whenever
$\SDM\subset \DM$.

\medskip
\noindent $\bullet$
A {\it discrete Dirichlet datum} $g^\ptTau$ on $\Gamma_D$ is a pair
$\Bigl(g^{\ptptl\ptMrond},g^{\ptptl\ptdMrond}\Bigr)$ consisting of two sets of
real values $g^{\ptptl\ptMrond}=(g_\ptK)_{\ptK\in\ptptl\ptMrond}$ and
$g^{\ptptl\ptdMrond}=(g_\ptdK)_{\ptdK\in\ptptl\ptdMrond}$.
In practise, $g_\K$ (resp., $g_\dK$) can be obtained
by averaging the ``continuous'' Dirichlet
datum $g$ over the boundary volume $\K\subset \Gamma_D$  (resp., over
the part of $\Gamma_D$ adjacent to the boundary
dual volume $\dK$); if $g$ is continuous, the mean
value can be replaced by the value of $g$ at $\xK$ (resp., at $\xdK$).
We refer to \cite{ABH``double''} for details.

\medskip

\noindent $\bullet$
A {\it discrete Neumann datum  $s^{\ptTau}$ on $\Gamma_N$} is
a set of real values $(s_\ptK)_{\ptK\in \ptMrondGN}$.

In practice, $s_\ptK$ can be obtained by averaging
the ``continuous'' Neumann
datum $s$ over the degenerate volume $\K\subset \Gamma_N$. 
In the case $\Gamma_D=\text{\O}$, one should
be careful while using approximate quadratures to produce $s^\ptTau$ from
$s$. Indeed, some compatibility conditions between
Neumann data and source terms may arise while discretising elliptic equations
(this is the case of system \eqref{S1-p}, because the
difference of the two equations of the system is an elliptic equation,
and the compatibility condition \eqref{eq:Neumann-compatibility} is
needed for the solvability of the system).
The compatibility condition, expressed in terms of
$\int_{\Gamma_N} s_{i,e}$, should be preserved at the discrete level.
This is the case for the above choice of $s^\ptTau$: indeed, we have
the equality $ \int_{\Gamma_N} s=\int_{\Gamma_N} s^\ptTau$.

\medskip
\subsubsection{The discrete gradient operator}\label{ssec:discrete grad}~\\[-5pt]

\noindent $\bullet$ On the set $\R^\ptTau$ of discrete
functions $w^\ptTau$ on ${\Om}$, we define the {\it discrete gradient}
operator $\grad_{g^\ptTau}^\ptTau[\cdot]$
with Dirichlet data $g^\ptTau$ on $\Gamma_D$:
\begin{equation}\label{DiscrGrad}
\grad_{g^\ptTau}^\ptTau: w^\ptTau\in \R^\ptTau \mapsto \grad_{g^\ptTau}^\ptTau
w^\ptTau=\Bigl( \grad_\ptD w^\ptTau \Bigr)_{\ptD\in\ptDrond}
\in (\R^d)^\ptDrond,
\end{equation}
where  the entry  $\grad_{\!\ptD} w^\ptTau$ 
of the discrete field $\grad^\ptTau w^\ptTau$ relative
to  $\DM=\DM^{\ptKdotIKplus}$ is
\begin{equation}\label{DiscrGradD3}
\text{$\grad_{\!\ptD} w^\ptTau$  is s.t. } \left\{
\begin{array}{l}
\dsp \ProjD(\grad_{\!\ptD} w^\ptTau) =
\frac{\wplus-\wdot}{\ddotplus}
\,\nudotplus,\\[8pt]
\dsp \dProjD(\grad_{\!\ptD} w^\ptTau) = \grad F(\cdot),
\end{array}
\right.
\ee
where
\begin{itemize}
\item[$\cdot$] $F(\cdot)$ is the affine function from $\R^3$ to $\R$ 
that is constant in the direction $\Ndotplus$
orthogonal to $\KIL$ and that is  the {\it ad hoc} affine 
interpolation (namely, \eqref{eq:interpolation} below) of the values
$w^*_i$ at the vertices $x^*_i$, $i=1,\ldots,l$, of $\KIL$;

\smallskip
\item[$\cdot$] for the vertices of $\DM$ lying in
$\Om\cup\Gamma_N$, $\wdot\!=\!w_\ptKdot,\,\wplus\!=\!w_\ptKplus$, $w^*_i
=w_{\ptK^{\!*}_{\!i}}$, etc. (we use the simplified notation
in the diamond $\DM=\DM^{\ptKdotIKplus}$, as depicted in
Figure~\ref{Fig-3D}).  For the vertices of $\DM$ that
lie in $\Gamma_D$, the values of $g^\ptTau$ are
used: e.g., if $x_\ptKdot\in\Gamma_D$, then
we set $\wdot:=g_\ptKdot$ in the above formula.\smallskip
\end{itemize}

Clearly, if $l=3$ there is a unique consistent
interpolation of the values $w_1,w_2,w_3$.
For $l\geq 4$, no consistent interpolation exists, and 
we choose the linear form in $w^*_{\!i}$ that leads to the expression
\begin{equation}\label{eq:interpolation}
\dProjD(\grad_{\!\ptD} w^\ptTau)
\!=\!
\frac 2{\sum_{i=1}^l
\,\langle\,\Ndotplus,\overrightarrow{\xdotplus
\xidemi},\overrightarrow{x^*_i
x^*_{i\hspace*{-1pt}+\!1}}\,\rangle}\; \sum_{i=1}^l
 (w^*_{\!i\hspace*{-1pt}+\!1}-w^*_{\!i})\;
 \Bigl[\Ndotplus\times
\overrightarrow{\xdotplus \xidemi}\Bigr];
 \end{equation}
it is shown in \cite{AndrBend} that this choice is exact on
affine functions, and  that it leads to the discrete duality formula.
For an explicit formula of $\grad_{\!\ptD} w^\ptTau$, note that
$\vec p=\ProjD(\grad_{\!\ptD} w^\ptTau)$, $\vec{p^*}
=\dProjD(\grad_{\!\ptD} w^\ptTau)$ are given; then one 
expresses $\grad_{\!\ptD} w^\ptTau$ as
\begin{equation}\label{eq:grad-representation}
\grad_{\!\ptD} w^\ptbarTau=\frac 1{\Vol(\DM)} \sum_{i=1}^l
\biggl\{  \frac{\Vol( \SDM^{\ptKdotIKplus}_{\ptdKiIdKiun})}
{\overrightarrow{\xdot\xplus} \cdot \Ndotplus} (\wplus-\wdot)
\,\Ndotplus +  \frac 13  (w^*_{\!i\hspace*{-1pt}+\!1}-w^*_{\!i})\;
\Bigl[ \overrightarrow{\xdot\xplus}\times
\overrightarrow{\xdotplus \xidemi}\Bigr] \biggr\}
\end{equation}

\begin{rem}\rm \label{rem:DiscrGradSense}
In \eqref{DiscrGradD3}, the primal mesh $\overline{\Mrond}$
serves to reconstruct one component of the
gradient, which is the one in the direction
$\nudotplus$. The dual mesh $\overline{\dMrond}$ serves to
reconstruct, with the help of the formula \eqref{eq:interpolation},
the two other components, which are those lying in the plane
containing $\KdotIKplus$. The same happens for
version $(A)$ of the scheme. On the contrary, version $(C)$ only
reconstructs one direction of the discrete gradient
on the mesh $\dMrond$, while the third direction is reconstructed
 on a third mesh that we denote by $\diezMrond$.
\end{rem}

\begin{rem}\rm \label{rem:consistAffines}
We stress that our gradient approximation is consistent
(see \cite{AndrBend} for the proof). Indeed, let
$\wdot,\wplus, (w^*_{i,i\hspace*{-1pt}+\!1})_{i=1}^l$ be the
values at the points $\xdot,\xplus, (\xidemi)_{i=1}^l$,
respectively, of an affine on $\DM=\DM^\ptKdotIKplus$ function
$w$. Then $\grad_\ptD w^\ptTau$ coincides
with the value of $\grad w$ on $\DM$.
\end{rem}

\subsubsection{The discrete divergence operator}\label{ssec:discrete div}~
\medskip

\noindent $\bullet$ On the set $(\R^d)^\ptDrond$ of discrete
fields $\vec \Frond^\ptTau$, we define the
{\it discrete divergence} operator $\div_{\!\!s^\ptTau}^{\!\!\ptTau}[\cdot]$
with Neumann data $s^\ptTau$ on $\Gamma_N$:
\begin{equation}\label{DiscrDiv}
\begin{split}
\div_{\!\!s^\ptTau}^{\!\!\ptTau}:\vec \Frond^\ptTau\in (\R^d)^\ptDrond
& \mapsto \div_{\!\!s^\ptTau}^{\!\!\ptTau} \vec \Frond^\ptTau
\\ & =
\biggl(\,\Bigl(\div_{\!\!\ptK} \vec \Frond^\ptTau\Bigr)_{\ptK\in\ptMrond=\ptMrondOm\cup\ptMrondGN}\,,\,
 \Bigl(\div_{\!\!\ptdK} \vec \Frond^\ptTau
 \Bigr)_{\ptdK\in\ptdMrond}\,\biggr) \in \R^\ptTau,
\end{split}
\end{equation}
where the entries $\div_{\!\!\ptK} \vec \Frond^\ptTau$ (for $\K\in\MrondOm$) and  $\div_{\!\!\ptdK}
\vec \Frond^\ptTau$ of the discrete function
$\div^{\!\!\ptTau} \vec \Frond^\ptTau$ on $\Om$ are given by
\begin{equation}\label{DiscrDivM-simple}
\begin{array}{l}
\dsp
\forall\,\K\in \MrondOm,\;\;  \div_{\!\!\ptK} \vec \Frond^\ptTau=
\frac{1}{\Vol(\ptK)}
\sum_{\ptD\in\ptDrond :\, \ptD\cap \ptK\neq\text{\O}} \int_{\ptptl\ptK\cap\ptD}
\vec \Frond_D\cdot n_\ptK,\\[7pt]
\dsp \forall\,\dK\in \dMrond,\;\;
\div_{\!\!\ptdK} \vec \Frond^\ptTau=
 \frac{1}{\Vol(\ptdK)}
\sum_{\ptD\in\ptDrond :\, \ptD\cap \ptdK\neq\text{\O}}
\int_{\ptptl\ptdK\cap\ptD}\vec \Frond_D\cdot n_\ptdK,
\end{array}
\end{equation}
where $n_\ptK$ (resp., $n_\ptdK$) denotes the exterior unit
normal vector to $\K$ (resp., to $\dK$). Further, for $\K\in\MrondGN$, we mean that $n_\ptK$ points inside $\Om$,
and we adapt the following formal definition:
\begin{equation}\label{eq:division-by-zero-convention}
\begin{array}{l}
\dsp
\forall\,\K\in \MrondGN,\;\;   {\Vol(\K)}\div_{\!\!\ptK} \vec \Frond^\ptTau:=
\vec \Frond_D\cdot n_\ptK + s_\ptK   \;\\
\dsp\text{for the diamond $\DM$ such that $\overline{\DM}\cap \Gamma_N=\K$};
\end{array}
\end{equation}
thus, although $\Vol(\K)$ is zero, in calculations we only use the products $\Vol(\K)  \div_{\!\!\ptK} \vec \Frond^\ptTau$,
which are well defined thanks to convention \eqref{eq:division-by-zero-convention}. 
In practise, the discrete equations corresponding  to volumes of $\MrondGN$ will always read as
\begin{equation}\label{eq:boundary-div-eqns}
\vec \Frond_D\cdot n_\ptK + s_\ptK =0, \quad \K\in\MrondGN.
\end{equation}
Notice that the values of the Neumann data $s^\ptTau$ only appear 
in the convention \eqref{eq:division-by-zero-convention} for 
the degenerate primal volumes $\K\subset\Gamma_N$; 
at the same time,  in the volumes $\dK$ adjacent to $\Gamma_N$
the data $s^\ptTau$ are taken into account indirectly. Namely, let $\dK$ be 
a dual volume adjacent to $\Gamma_N$, and let $\DM$ be a diamond  intersecting $\dK$ and 
adjacent to $\Gamma_N$; then the value
$\Frond_D\cdot n_\ptK$ used for the definition of $\div_{\!\!\ptdK} \vec \Frond^\ptTau$
is linked to the data $s^\ptTau$ via equations \eqref{eq:boundary-div-eqns}.

\medskip
The formulas \eqref{DiscrDivM-simple} are standard for divergence discretisation
in finite volume methods; their interpretation is
straightforward, using the Green-Gauss theorem. 
The consistency of the discrete divergence operator  (in the weak sense) can
be inferred by duality from the one of the discrete gradient operator
(Remark~\ref{rem:consistAffines}) and from the discrete
duality property \eqref{eq:discr-duality}; see 
Proposition~\ref{PropConsistency}(iii) and \cite{AndrBend}.

For the explicit calculation of the right-hand sides in \eqref{DiscrDivM-simple},
one can further split diamonds into subdiamonds.
In a generic subdiamond, we use the following notation.
Consider $\SDM\in\Srond$; it is associated with a unique oriented diamond
which we denote $\DM^{\ptKdotIKplus}$, so that $\SDM$ is of the
form $\SDM=\SDM^{\ptKdotIKplus}_{\ptdKiIdKiun}$.
In order to cope with the vector orientation issues, given
$\SDM=\SDM^{\ptKdotIKplus}_{\ptdKiIdKiun}$ we define
\begin{equation*}
\epsSK:=
\begin{cases}
  0, \text{ if
$\K=\Kdot$}\\
1, \text{ if $\K=\Kplus$}
\end{cases}
, \qquad
\epsSdK:=
\begin{cases}
  0, \text{ if
$\dK=\K^*_i$}\\
1, \text{ if $\dK=\K^*_{i\hspace*{-1pt}+\!1}$}
\end{cases}
 .
\end{equation*}
 For $\K\in\Mrond$, we denote by $\Vrond(\K)$
the set of all subdiamonds $\SDM\in \Srond$ such that $\K\cap \SDM
\neq \text{\O}$. In the same way, for $\dK\in\dMrond$
we define the set $\dVrond(\dK)$ of the subdiamonds intersecting
$\dK$. Then, using the notation $\langle \cdot,\cdot,\cdot \rangle$ for the
mixed product on $\R^3$, we can express
formulae \eqref{DiscrDivM-simple} as
\begin{equation}\label{DiscrDivM}
\begin{array}{l}
\dsp
\forall\,\K\in \MrondOm\;\;\;\; \div_{\!\!\ptK} \vec \Frond^\ptTau=
\frac{1}{2\Vol(\ptK)}\!\!\!\!\!\!\sum_{\hspace*{3mm}\ptS\in\ptVrond(\ptK)}\!\!\!\!
(-1)^\ptepsSK  \langle\,\vec\Frond_{\!\ptS},
\overrightarrow{\xdotplus
\xidemi}, \overrightarrow{x^*_i x^*_{i\hspace*{-1pt}+\!1} },
\,\rangle 
\\[7pt]
\dsp
\forall\,\dK\in \dMrond\;\;\;\; \div_{\!\!\ptdK} \vec \Frond^\ptTau=
 \frac{1}{2\Vol(\ptdK)}
\!\!\!\!\!\!\!\!\!\!\sum_{\hspace*{5mm}\ptS\in\ptdVrond(\ptdK)}\!\!\!\!\!\!\!\!
(-1)^\ptepsSdK \! \langle\,\vec \Frond_{\!\ptS},
\overrightarrow{\xdot\xplus},
\overrightarrow{\xdotplus\xidemi}\,\rangle.
\end{array}
\end{equation}
In \eqref{DiscrDivM}, each subdiamond $\SDM$
in $\Vrond(\K)$ (or in $\dVrond(\dK)$) has the form
$\SDM=\SDM^{\ptKdotIKplus}_{\ptdKiIdKiun}$, with some
$\Kdot,\Kplus,\K^*_i,\K^*_{i\hspace*{-1pt}+\!1}$; the notations $\epsSK,
\epsSdK,\xdot,\xplus,\xdotplus,\xidemi,x^*_i,x^*_{i\hspace*{-1pt}+\!1}$
refer to $\SDM=\SDM^{\ptKdotIKplus}_{\ptdKiIdKiun}$  (see Figure~\ref{Fig-3D}).
Details can be found in \cite{AndrBend}.

\begin{rem}\label{rem:reduction-to-duality}
In practise it is not necessary to calculate the discrete divergence; 
indeed, with the help of the duality property, one can
express the discrete system of equations in the dual form, 
where the calculation of the discrete divergence of the solution 
is replaced by the calculation of the discrete gradient of 
a test function. Thus, as for the so-called mimetic finite difference methods, in order 
to formulate a DDFV scheme it is sufficient to calculate discrete gradients. This 
amounts to the ``discrete weak 
formulation'' \eqref{eq:dualformulation} we use in the sequel.
\end{rem}

\medskip
\subsubsection{The scalar products $\Bleft\cdot,\cdot\Bright_{\Om},
\Aleft\cdot,\cdot\Aright_{\Om},\Cleft \cdot,\cdot \Cright_{\Gamma_N}$
and discrete duality}~

\noindent
 $\bullet$  Recall that $\R^\ptTau$ is the space of all
discrete functions on $\Om$. For  $w^\ptTau,v^\ptTau \in
\R^\ptTau$, set
\begin{equation*}
\Bleft w^\ptTau,\,v^\ptTau   \Bright =
\frac{1}{3}\sum_{\ptK\in\ptMrond} \Vol(\K)\;w_\ptK  v_\ptK \;+\;
\frac{2}{3} \sum_{\ptdK\in\ptdMrond} \Vol(\dK)\;w_\ptdK  v_\ptdK.
\end{equation*}

Recall that $(\R^3)^\ptDrond$ is the space
of discrete fields on $\Om$. For $\vec \Frond^\ptTau,\vec
\Grond^\ptTau \in (\R^3)^\ptDrond$, set
\begin{equation*}
\Aleft \vec \Frond^\ptTau,\, \vec \Grond^\ptTau   \Aright =
\sum_{\ptD\in\ptDrond} \Vol(\DM) \, \vec \Frond_\ptD \cdot \vec
\Grond_\ptD.
\end{equation*}

\medskip
\noindent $\bullet$
Recall that in \eqref{eq:DiscreteSol}, for version $(B)$ of the scheme, given
a discrete function $v^\ptTau$,  we set
$$v^\ptTau(x)=\frac 13 \sum_{\ptK\in\ptMrond} v_\ptK \char_\ptK(x) + \frac 23
\sum_{\ptdK\in\ptdMrond} v_\ptdK \char_\ptdK(x).$$
Then the function $ v^{\ptl\ptTau}\in L^\infty(\Gamma_N)$
can be defined as the trace of $v^\ptTau$ on $\Gamma_N$.
This means, $v^{\ptl\ptTau}(x):=\frac{1}{3}v_\ptK +
\frac{2}{3}v_\ptdK$ where for $\mathcal H^{2}$-a.e $x\in
\Gamma_N$, $\K$ and $\dK$ are uniquely defined by the fact that
$x\in\overline{\K\cap \dK}$.

\medskip
\noindent $\bullet$
 Finally, for $\Cleft \cdot,\cdot \Cright_{\Gamma_N}$, we
 simply use the $L^2$ scalar product on $\Gamma_N$.

\medskip
Now a straightforward adaptation of the proof
of the discrete duality property in \cite{AndrBend} yields the
desired discrete duality property \eqref{eq:discr-duality}.

\section{The DDFV schemes and convergence results}\label{sec:Scheme-and-Results}

The time-implicit DDFV finite volume schemes for
Problem \eqref{S1-p},\eqref{S2},\eqref{S3} can be
formally (up to convention \eqref{eq:division-by-zero-convention})
written under the following general form:
\begin{equation}\label{all-AbstractScheme}
 \left\{\begin{array}{l}
\dsp \text{find
$\biggl((u_i^{\ptTau,n},u_e^{\ptTau,n},v^{\ptTau,n})\biggr)_{n=1,\ldots,N}\subset
(R^\ptTau)^3$ satisfying the equations}\\[12pt] \dsp
\frac{v^{\ptTau,n+1} - v^{\ptTau,n}}{\Delt}
-\div^\ptTau_{\!\!s_i^{\ptTau,n+1}}[\bM_{i}^\ptTau\grad^\ptTau_{\!g_i^{\!\ptTau,n+1}}
u_i^{\ptTau,n+1}]+
h^{\ptTau,n+1}
-\Iap^{\ptTau,n+1}=0,\\[5pt]
\dsp \frac{v^{\ptTau,n+1} - v^{\ptTau,n}}{\Delt}
+\div^\ptTau_{\!\!s_e^{\ptTau,n+1}}[\bM_{e}^\ptTau \grad^\ptTau_{\!g_e^{\!\ptTau,n+1}}
u_e^{\ptTau,n+1}]+
h^{\ptTau,n+1}
-\Iap^{\ptTau,n+1}=0,\\[8pt]
\dsp  v^{\ptTau,n+1}-(u_i^{\ptTau,n+1}-u_e^{\ptTau,n+1})=0,
\end{array}\right.
\end{equation}
\begin{equation}\label{eq:discr-IC}
v^{\ptTau,0}= v_0^\ptTau.
\end{equation}
For two rigorous interpretations of \eqref{all-AbstractScheme}, see 
Definition~\ref{def:scheme-solution} below.

We normalise $u_e^{\ptTau,n+1}$ by requiring, for all $n=1,\ldots,N$,
\begin{equation}\label{eq:discrete-normalization}
	\begin{split}
		&\text{if $\Gamma_D=\text{\O}$, then 
		$\sum_{\ptK\in{\ptMrondOm}}\! \Vol(\K) u_{e,\ptK}\!=\!0$,} 
		\\ &\quad \text{$\sum_{\ptdK\in\overline{\ptdMrond}}\! \Vol(\dK) u_{e,\ptdK}\!=\!0$,
		$\sum_{\ptdiezK\in\overline{\ptdiezMrond}}\! \Vol(\diezK) u_{e,\ptdiezK}\!=\!0$}
	\end{split}
\end{equation}
(the last condition is only meaningful for the meshing $(C)$).

The triple $(u_i^{\ptTau,n+1},u_e^{\ptTau,n+1},v^{\ptTau,n+1})$
constitutes the unknown discrete functions at time level $n$;
$v_0^\ptTau$ and $\Iap^{\ptTau,n+1}$ stand for the projections of
the initial datum $v_0$ and the source term $\Iap$ on the space of
discrete functions. Similarly,
$g_{i,e}^{\!\ptTau,n+1}$,$s_{i,e}^{\!\ptTau,n+1}$ are suitable
projections of the Dirichlet and Neumann data $g_{i,e},s_{i,e}$,
respectively. Notice that the boundary data are taken into account in the definition
of the discrete operators  $\grad^\ptTau_{\!g^{\!\ptTau}}$, $\div^\ptTau_{\!\!s^{\ptTau}}$.
The matrices $\bM_{i,e}^\ptTau(\cdot)$ are the
projections of $\bM_{i,e}(\cdot)$ on the diamond
mesh.
We will mainly work with the
mean-value projections; e.g., the projection $\mathbb{P}^\ptTau$ on $\Tau$ of $v_0$
would be the discrete function with the entry
$\frac{1}{\Vol(\ptK)}\dsp\int_\ptK v_0$ corresponding to a control volume $\K$.
For regular functions, the centre-valued projection $\mathbb{P}^\ptTau_c$
can be considered, where the entry $v_0(x_\ptK)$ corresponds to a
volume $\K$. We refer to Sections~\ref{ssec:fctsfieldsdata},~\ref{ssec:proj-ops} for 
details on  the projection operators in use.

\smallskip
A relation that links $h^{\ptTau,n+1}$ to $v^{\ptTau,n+1}$ closes the scheme;
we consider the following two choices: the fully implicit scheme,
\begin{equation}\label{eq:h-fully-impl}
h^{\ptTau,n+1}=\mathbb{P}^\ptTau h(v^{\ptTau,n+1}(\cdot)),
\end{equation}
and the linearised implicit scheme
\begin{equation}\label{eq:h-linearized-impl}
h^{\ptTau,n+1}=  \mathbb{P}^\ptTau\biggl(\,(\tilde
b(v^{\ptTau,n}(\cdot))-L)\,v^{\ptTau,n+1}(\cdot)-l \biggr).
\end{equation}
where  $\mathbb{P}^\ptTau$ is the projection operator acting from $L^1(\Om)$
into the space of the corresponding discrete functions;
further,  $v^{\ptTau,n+1}(\cdot)$ define the
piecewise constant functions reconstructed according to
\eqref{eq:DiscreteSol} from the values
$v^{\ptTau,n+1}=\Bigl(v^{\ptMrond,n+1},v^{\ptdMrond,n+1}\Bigr)$
(for versions $(A)$ and $(B)$)
or $v^{\ptTau,n+1}=\Bigl(v^{\ptMrond,n+1},v^{\ptdMrond,n+1},
v^{\ptdiezMrond,n+1}\Bigr)$ (for version $(C)$).
 The same convention applies to $v^{\ptTau,n}(\cdot)$.
We refer to Section~\ref{ssec:reaction-term} for a detailed
description of such discretisation of the ionic current term.

\begin{rem}\label{rem:h-reconstruction}\rm
In the discretisation of the ionic current term $h(v)$,
the choice \eqref{eq:h-fully-impl},\eqref{eq:h-linearized-impl}
is made to reconstruct the $L^1$ function $v^\ptTau(\cdot)$ and
then to re-project it on the mesh $\Tau$.
This is tricky and it may seem unnatural. But we explain in
Section~\ref{ssec:reaction-term} that this is the way to ensure
that the structure of the reaction terms in the discrete equations
yields exactly the same {\it a priori} estimates
as for the continuous problem.

The seemingly simpler choice $h^{\ptTau,n+1}
=h(v^{\ptTau,n+1})$ (instead of \eqref{eq:h-fully-impl})
does not have good structure properties; we can justify
the convergence of the associated scheme
by adding a penalisation term (cf. \cite{ABK}) whose role is to
make small the differences $v^{n+1}_\ptK-v^{n+1}_\ptdK$,
for $\K\cap\dK\neq \text{\O}$, in the left-hand
side of the scheme \eqref{all-AbstractScheme}.
\end{rem}

\begin{defi}\label{def:scheme-solution}
  A {\it discrete solution} is a set
$\biggl((u_i^{\ptTau,n+1},u_e^{\ptTau,n+1},v^{\ptTau,n+1})\biggr)_{n\in [0,N]}$
(in the sequel, we denote it by
$(u_i^{\ptTau,\ptDelt},u_e^{\ptTau,\ptDelt},v^{\ptTau,\ptDelt})$)
satisfying the initial data \eqref{eq:discr-IC}, the 
normalisation equations \eqref{eq:discrete-normalization}, and 
the closure relation \eqref{eq:h-fully-impl} or \eqref{eq:h-linearized-impl}; 
moreover, it should
solve system \eqref{all-AbstractScheme} in the following sense:\\[3pt]
$\cdot$ equalities in \eqref{all-AbstractScheme} hold component per component for all entries corresponding to primal volumes $\K\in \MrondOm$ and those corresponding
to the dual volumes $\dK\in\dMrond$;\\[2pt]
$\cdot$ for the entries corresponding to $\K\in\MrondGN$, convention \eqref{eq:division-by-zero-convention} is used, that is,
the equations take the form $v^{\ptTau,n+1}-(u_i^{\ptTau,n+1}-u_e^{\ptTau,n+1})=0$ and
$$ (\bM_{i,e})_\ptD \grad_{\ptD} u^{\ptTau,n+1}\cdot n_\ptK + (s_{i,e})^{n+1}_\ptK =0
\quad \text{for $\DM\in\Drond$ such that $\overline{\DM}\cap \Gamma_N=\K$};$$

Equivalently, $(u_i^{\ptTau,\ptDelt},u_e^{\ptTau,\ptDelt},v^{\ptTau,\ptDelt})$ 
is a discrete solution if
$v^{\ptTau,\delt}=u_i^{\ptTau,\delt}- u_e^{\ptTau,\delt}$ and
for all $\ph^\ptTau\in R^\ptTau$,
for all $n\in [0,N]$ the following identities hold:
\begin{equation}\label{eq:dualformulation}
\left\{\begin{array}{l}
\dsp
\frac1{\Delt} \Bleft{v^{\ptTau,n+1} - v^{\ptTau,n}},\ph^\ptTau\Bright_\Om
+\Aleft \bM_{i}^\ptTau\grad^\ptTau_{\!g_i^{\!\ptTau,n+1}}
u_i^{\ptTau,n+1},\grad^\ptTau_0 \ph^\ptTau\Aright_\Om\\[5pt]
\dsp\qquad\qquad  + \Cleft s_i^\ptTau, \ph^{\ptl\ptTau} \Cright_{\Gamma_N}
+ \Bleft h^{\ptTau,n+1}-\Iap^{\ptTau,n+1}, \ph^\ptTau\Bright_\Om =0,\\[8pt]
\dsp
\frac1{\Delt} \Bleft{v^{\ptTau,n+1} - v^{\ptTau,n}},\ph^\ptTau\Bright_\Om
-\Aleft \bM_{i}^\ptTau\grad^\ptTau_{\!g_e^{\!\ptTau,n+1}}
u_e^{\ptTau,n+1},\grad^\ptTau_0 \ph^\ptTau\Aright_\Om\\[5pt]
\dsp\qquad\qquad  - \Cleft s_e^\ptTau, \ph^{\ptl\ptTau} \Cright_{\Gamma_N}
+ \Bleft h^{\ptTau,n+1}-\Iap^{\ptTau,n+1}, \ph^\ptTau\Bright_\Om =0.
\end{array}\right.
\end{equation}
\end{defi}
Notice that the equivalence of the two above 
formulations of \eqref{all-AbstractScheme} is easy to establish;
namely, the discrete duality property \eqref{eq:discr-duality} is 
used together with the choice of discrete test 
functions $\ph^\ptTau$ that only contain one non-zero entry.

\medskip
The existence of solutions to the discrete equations
is obtained in a standard way from the Brouwer
fixed-point theorem and the coercivity enjoyed by our schemes;
the uniqueness proof mimics the one of Theorem~\ref{theo:L^2-contraction}.
More precisely, we have

\begin{prop}\label{prop:schemeexist}
Assume \eqref{eq:h-conseq},\eqref{monotone-bis}.
Whenever $\delt< \frac 1{2L}$, for all given boundary
data satisfying \eqref{eq:Neumann-compatibility} 
(if $\Gamma_D=\text{\O}$, we add \eqref{eq:discrete-normalization} to the scheme) and for all
given initial data \eqref{eq:discr-IC} there exists one and only one discrete solution to the scheme
\eqref{all-AbstractScheme},\eqref{eq:h-fully-impl}; likewise, there exists
one and only one discrete solution to the scheme
\eqref{all-AbstractScheme},\eqref{eq:h-linearized-impl}.
Moreover, for fixed boundary data, the discrete $L^2$ contraction property holds
for the $v^{\ptTau,\delt}$ component of  the solution of the fully implicit scheme
\eqref{all-AbstractScheme},\eqref{eq:discr-IC},\eqref{eq:h-fully-impl}:
Indeed, for all $n\!\in\![0,N]$,
\begin{equation}\label{eq:discr-contdep}
\Bleft v^{\ptTau,n+1}\!\!-\!\hat v^{\ptTau,n+1}\,,\,
v^{\ptTau,n+1}\!\!-\!\hat v^{\ptTau,n+1}\Bright_{\Om}
\;\leq\; e^{L(n+1)\delt}\Bleft v^{\ptTau,0}\!\!-\!\hat
v^{\ptTau,0} \,,\, v^{\ptTau,0}\!\!-\!\hat
v^{\ptTau,0}\Bright_{\Om}.
\end{equation}
\end{prop}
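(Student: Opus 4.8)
The plan is to reduce the scheme at a fixed time level to a finite-dimensional problem for $(u_i^{\ptTau,n+1},u_e^{\ptTau,n+1})$ (with $v^{\ptTau,n+1}=u_i^{\ptTau,n+1}-u_e^{\ptTau,n+1}$), working throughout with the equivalent weak form \eqref{eq:dualformulation}; the Dirichlet values are fixed to $g^\ptTau$, and in the case $\Gamma_D=\text{\O}$ the normalisation \eqref{eq:discrete-normalization} is built into the space so that it becomes a genuine vector space. For the fully implicit scheme I would fix $v^{\ptTau,n}$ and realise the residual of \eqref{eq:dualformulation},\eqref{eq:h-fully-impl} as a continuous map $\Phi$ of the unknown into itself via the inner product $\Bleft\cdot,\cdot\Bright_\Om$, so that a zero of $\Phi$ is a discrete solution; by the standard corollary of Brouwer's theorem it then suffices to show that $\Bleft\Phi(\cdot),\cdot\Bright_\Om$ is positive on a large sphere. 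To get this I would reproduce the continuous energy estimate discretely: test the first equation of \eqref{eq:dualformulation} with the interior part of $u_i^{\ptTau,n+1}-g_i^{\ptTau,n+1}$ and the second with $u_e^{\ptTau,n+1}-g_e^{\ptTau,n+1}$ and subtract. The time term is controlled by $\Bleft a-b,a\Bright_\Om\ge\frac12(\Bleft a,a\Bright_\Om-\Bleft b,b\Bright_\Om)$, the diffusion terms are coercive in the discrete gradient through positive-definiteness of $\bM_{i,e}^\ptTau$ and the $\Aleft\cdot,\cdot\Aright_\Om$ structure, and, crucially, the identity behind Remark~\ref{rem:h-reconstruction} — namely $\Bleft\mathbb{P}^\ptTau h(v^{\ptTau}(\cdot)),\varphi^\ptTau\Bright_\Om=\int_\Om h(v^{\ptTau}(x))\,\varphi^\ptTau(x)\,dx$ for the mean-value projection — transfers the coercivity \eqref{eq:h-conseq} directly, giving $\Bleft h^{\ptTau,n+1},v^{\ptTau,n+1}\Bright_\Om\ge\frac1{2\alpha}\|v^{\ptTau,n+1}(\cdot)\|_{L^r(\Om)}^r-C$ after absorbing the lower-order terms coming from $\tilde h(z)=h(z)+Lz+l$ (which satisfies $\tilde h(z)\,z\ge 0$ by \eqref{monotone-bis}). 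The Neumann and source contributions are bounded by Cauchy--Schwarz and Young, the Dirichlet data producing only bounded extra gradient terms; combined with the discrete Poincar\'e inequality (Poincar\'e--Wirtinger modulo constants in the pure-Neumann case) this yields coercivity in the full discrete norm of $(u_i,u_e)$ and hence existence.

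For uniqueness I would prove the contraction \eqref{eq:discr-contdep} directly, mimicking Theorem~\ref{theo:L^2-contraction}. Given two discrete solutions with data $v^{\ptTau,0},\hat v^{\ptTau,0}$ and the same boundary data, subtract the two systems, test the difference of the first equations with $u_i^{\ptTau,n+1}-\hat u_i^{\ptTau,n+1}$ and of the second with $u_e^{\ptTau,n+1}-\hat u_e^{\ptTau,n+1}$ (both in the homogeneous space), and subtract. Writing $w^{k}:=v^{\ptTau,k}-\hat v^{\ptTau,k}$, the diffusion contribution is nonnegative, the time term gives $\ge\frac1{2\Delt}(\Bleft w^{n+1},w^{n+1}\Bright_\Om-\Bleft w^{n},w^{n}\Bright_\Om)$, and the reaction difference equals $\int_\Om(h(v^{\ptTau})-h(\hat v^{\ptTau}))(v^{\ptTau}-\hat v^{\ptTau})\ge -L\|w^{n+1}(\cdot)\|_{L^2(\Om)}^2$ by the same projection identity and the monotonicity of $\tilde h$. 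Using the pointwise convexity bound $\|w^{n+1}(\cdot)\|_{L^2(\Om)}^2\le\Bleft w^{n+1},w^{n+1}\Bright_\Om$ (the weights $\frac13,\frac23$ in \eqref{eq:DiscreteSol} forming a convex combination) I obtain the one-step inequality $(1-2L\Delt)\Bleft w^{n+1},w^{n+1}\Bright_\Om\le\Bleft w^{n},w^{n}\Bright_\Om$; since $\delt<\frac1{2L}$ makes $1-2L\Delt>0$, the elementary discrete Gronwall inequality then gives \eqref{eq:discr-contdep}. Uniqueness for the fully implicit scheme is the case $\hat v^{\ptTau,0}=v^{\ptTau,0}$, in which $v\equiv\hat v$ and the strict positivity of $\bM_i^\ptTau$ together with the Dirichlet/normalisation conditions forces $u_{i,e}^{\ptTau,n+1}=\hat u_{i,e}^{\ptTau,n+1}$.

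For the linearised scheme \eqref{eq:h-linearized-impl} the system at each level is linear in the unknown at level $n+1$, the coefficient $\tilde b(v^{\ptTau,n}(\cdot))-L$ being frozen, so existence and uniqueness reduce to nonsingularity of a square linear system, i.e.\ to injectivity. Testing the homogeneous system with its own solution, the frozen reaction term contributes $\int_\Om(\tilde b(v^{\ptTau,n})-L)(v^{\ptTau,n+1})^2\ge -L\|v^{\ptTau,n+1}(\cdot)\|_{L^2(\Om)}^2\ge -L\Bleft v^{\ptTau,n+1},v^{\ptTau,n+1}\Bright_\Om$, since $\tilde b(z)=\tilde h(z)/z\ge 0$ by \eqref{monotone-bis}; with the nonnegative diffusion this yields $(\frac1\Delt-L)\Bleft v^{\ptTau,n+1},v^{\ptTau,n+1}\Bright_\Om\le 0$, and $\delt<\frac1{2L}$ forces $v^{\ptTau,n+1}=0$ and then, by coercivity of the diffusion and the boundary/normalisation conditions, only the trivial solution. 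Hence the matrix is invertible and the scheme is uniquely solvable.

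I expect the main obstacle to be the fully implicit existence step: encoding the boundary data $g^\ptTau$ and the pure-Neumann quotient/normalisation correctly into the finite-dimensional map, and verifying the coercivity of $\Bleft\Phi(\cdot),\cdot\Bright_\Om$ on a large sphere. The delicate point is the transfer of the nonlinear coercivity through the reconstruction-then-reprojection identity of Remark~\ref{rem:h-reconstruction}, which is exactly what makes the discrete reaction term reproduce $\int_\Om h(v^{\ptTau})v^{\ptTau}$, and hence the continuous \emph{a priori} estimates, at the discrete level.
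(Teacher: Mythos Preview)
Your proposal is correct and follows essentially the same route as the paper. The uniqueness/contraction argument (test the difference of the two systems with $u_{i,e}^{\ptTau,n+1}-\hat u_{i,e}^{\ptTau,n+1}$, use the projection identity \eqref{eq:reaction-summbyparts}, the monotonicity of $\tilde h$, the convexity bound \eqref{eq:Bsquare>intsquare}, and discrete Gronwall) and the treatment of the linearised scheme via $\tilde b\ge 0$ are exactly what the paper does.

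The one genuine methodological difference is in the existence step for the fully implicit scheme. You use the standard Brouwer corollary (coercivity of $\Bleft\Phi(\cdot),\cdot\Bright_\Om$ on a large sphere), whereas the paper sets up a Leray--Schauder degree argument: it embeds the problem in a one-parameter family $\mathcal F^\theta$ obtained by scaling all data $v_0,\Iap,g_{i,e},s_{i,e}$ by $\theta\in[0,1]$, derives a $\theta$-uniform a~priori bound on possible zeros (by the same energy computation you outline), and then deforms from the trivial zero of $\mathcal F^0$. Both approaches rest on the same discrete energy estimate and on \eqref{eq:reaction-summbyparts}; yours is slightly more direct, while the paper's homotopy argument is a bit more robust when the boundary data and the pure-Neumann compatibility/normalisation have to be carried through the degree calculation (this is precisely the ``main obstacle'' you anticipate). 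In particular, when you test with $u_{i,e}^{\ptTau,n+1}-g_{i,e}^{\ptTau,n+1}$ the reaction term produces an extra $\int_\Om h(v^{\ptTau,n+1})(g_i^{\ptTau,n+1}-g_e^{\ptTau,n+1})$; this is harmless (Young's inequality with exponents $r,r'$ and \eqref{eq:h-conseq} absorb it into the $\|v^{\ptTau,n+1}\|_{L^r}^r$ term), but it is worth making explicit.
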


\begin{rem}\label{rem:optimization}\rm
Let us point out that the fully implicit scheme leads, at each
time level, to a nonlinear system of equations, and to
compute the solution given by Proposition~\ref{prop:schemeexist}
(or, rather, a reasonable approximation to it) we can
use the following variational formulation of the scheme:
\begin{equation*}
 \left|
 \begin{array}{ll}
 & \text{at the time level $n$, minimise over
 $\R^\ptTau\times\R^\ptTau$ the functional}\\[10pt]
 & \!\!\!J\Bigl[u_i^\ptTau,u_e^\ptTau\Bigr]\,:=\, \frac 1{2\delt}
 \Bleft  v^\ptTau\,,\,v^\ptTau \Bright_\Om - \frac 1{\delt}
 \Bleft  v^\ptTau\,,\,v^{\ptTau,n} \Bright_\Om
 +  {\dsp\int_\Om} H(v^{\ptTau}(\cdot))\\[8pt]
 & \qquad\qquad\qquad+ \Aleft \bM_i^{\ptTau}
 \grad^\ptTau_{g_i^{\ptTau,n+1}} u^\ptTau\,,\,
 \grad^\ptTau_{g_i^{\ptTau,n+1}} u^\ptTau
 \Aright_\Om \\[8pt] & \qquad\qquad\qquad+ \Aleft \bM_e^{\ptTau}
 \grad^\ptTau_{g_e^{\ptTau,n+1}} u^\ptTau\,,\,
 \grad^\ptTau_{g_e^{\ptTau,n+1}} u^\ptTau
 \Aright_\Om\\[8pt]
 & \qquad\qquad\qquad- \Cleft s_i^{\ptTau,n+1}\,,\,
 u_i^{\ptl\ptTau}\Cright_{\Gamma_N}-\Cleft s_e^{\ptTau,n+1}\,,
 \,u_e^{\ptl\ptTau}\Cright_{\Gamma_N}
 - \Bleft  \Iap^{\ptTau,n+1}\,,\,v^\ptTau \Bright_\Om,\\[14pt]
 & \text{where $v^\ptTau:=u_i^\ptTau-u_e^\ptTau$,
 and $H:z\mapsto \int_0^z h(s)\,ds$ is the primitive of $h$}
 \end{array}
 \right.
\end{equation*}
(in the case $\Gamma_D=\text{\O}$, the
constraint \eqref{eq:discrete-normalization}
should be added on the domain of the functional $J$).
Similarly to the argument in \cite{ABH``double''}, it is checked from the
discrete duality formula and from
formula \eqref{eq:reaction-summbyparts} in Section~\ref{ssec:reaction-term}
that the scheme \eqref{all-AbstractScheme} is the Euler-Lagrange
equation for the above problem. From the properties
of $h(\cdot)$ it follows that for $\delt<\frac 1{2L}$,
we are facing a minimisation problem for the convex coercive functional $J$.
Thus descent iterative methods can be used
for solving the discrete system \eqref{all-AbstractScheme} at each time step.
\end{rem}
Now we can state the main result of this paper.
\begin{theo}\label{th:convergence}
Assume \eqref{eq:h-conseq},\eqref{monotone-bis} hold with some $r\geq 2$.
Assume that the family of meshes satisfies the regularity assumptions
\eqref{eq:mesh-regularity},\eqref{eq:inclination-bound},\eqref{eq:neighbours-bound}
(and the analogous restrictions on the mesh $\diezMrond$, for
version $(C)$) stated in
Section~\ref{ssec:MeshesRegularity}. Then
\begin{itemize}
\item[(i)] the sequence of solutions
$\Bigl(u_i^{\ptTau,\delt}(\cdot),u_e^{\ptTau,\delt}(\cdot),v^{\ptTau,\delt}(\cdot)  \Bigr)$
to the fully implicit scheme \eqref{all-AbstractScheme},\eqref{eq:discr-IC},
\eqref{eq:h-fully-impl},\eqref{eq:DiscreteSol}
converges, as the approximation parameters $\Delta x,\Delta t$ tend to zero,
to the unique solution $(u_i,u_e,v)$ of Problem \eqref{S1-p},\eqref{S2},\eqref{S3};
the convergence is strong in $L^2(Q)\times L^2(Q)\times L^r(Q)$.
Moreover, the discrete gradients converge to $(\grad u_i,\grad u_e,\grad v)$
strongly in $\Bigl(L^2(Q)\Bigr)^3$;

\smallskip
\item[(ii)] For any $r<16/3$, the  statement analogous to (i)
holds for the discrete solutions of the linearised implicit scheme
\eqref{all-AbstractScheme},\eqref{eq:discr-IC},\eqref{eq:h-linearized-impl},\eqref{eq:DiscreteSol}.
\end{itemize}

If $\Gamma_D=\text{\O}$, the constraint \eqref{eq:discrete-normalization}
should be added to the equations of the scheme.
\end{theo}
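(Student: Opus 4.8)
The plan is to follow the classical three-step route for finite volume convergence---uniform a priori estimates, compactness, and identification of the limit through the discrete weak formulation \eqref{eq:dualformulation}---and then to invoke the uniqueness statement of Theorem~\ref{theo:L^2-contraction} to promote subsequential convergence to convergence of the whole family. First I would derive the discrete counterpart of the energy estimate behind Theorem~\ref{theo:existence}. Taking $\ph^\ptTau=u_i^{\ptTau,n+1}$ in the first line of \eqref{eq:dualformulation}, $\ph^\ptTau=u_e^{\ptTau,n+1}$ in the second, subtracting, and using $v=u_i-u_e$, the two time terms combine into $\tfrac1\delt\Bleft v^{\ptTau,n+1}-v^{\ptTau,n},v^{\ptTau,n+1}\Bright_\Om$ while the diffusion terms produce $\Aleft\bM_i^\ptTau\grad^\ptTau u_i^{\ptTau,n+1},\grad^\ptTau u_i^{\ptTau,n+1}\Aright_\Om+\Aleft\bM_e^\ptTau\grad^\ptTau u_e^{\ptTau,n+1},\grad^\ptTau u_e^{\ptTau,n+1}\Aright_\Om$. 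Multiplying by $\delt$, summing over $n$, and applying the discrete chain rule (the discrete analogue of Lemma~\ref{lem:chain-rule}, which follows from $2a(a-b)\ge a^2-b^2$) yields a telescoping $L^\infty(0,T;L^2)$ bound on $v^{\ptTau,\delt}$; coercivity of $\bM_{i,e}^\ptTau$ gives a uniform $L^2(Q)$ bound on the discrete gradients, and the lower bound in \eqref{eq:h-conseq} turns the reaction contribution $\Bleft h^{\ptTau,n+1},v^{\ptTau,n+1}\Bright_\Om$ into a uniform $L^r(Q)$ bound on $v^{\ptTau,\delt}$. The re-projection device in \eqref{eq:h-fully-impl} is precisely what makes the discrete reaction term reproduce the continuous structure (cf.\ Remark~\ref{rem:h-reconstruction}); a summation-by-parts identity for that term, adapted to the lifting \eqref{eq:DiscreteSol}, is needed here.

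Next I would extract compactness. The $L^2(Q)$ gradient bound, through the standard DDFV discrete Poincar\'e and space-translation estimates, controls space translates of the reconstructions. A point specific to the ``double'' mesh is that the same gradient bound forces the primal reconstruction $v^{\ptMrond,\delt}$ and the dual reconstruction $v^{\ptdMrond,\delt}$ to share a common limit, so that the lifting \eqref{eq:DiscreteSol} and its constituents all converge to one function $v$. Time translates of $v^{\ptTau,\delt}$ are controlled by using the scheme together with the established bounds, which gives an Aubin--Simon type estimate. A Fr\'echet--Kolmogorov argument then provides strong convergence of $u_{i,e}^{\ptTau,\delt}$ in $L^2(Q)$ and of $v^{\ptTau,\delt}$ in $L^r(Q)$ along a subsequence, together with weak $\bigl(L^2(Q)\bigr)^3$ convergence of the discrete gradients to limit fields, which the consistency property of Remark~\ref{rem:consistAffines} identifies with $\grad u_{i,e}$.

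Then I would pass to the limit in \eqref{eq:dualformulation}. For a fixed $\ph\in\mathcal D([0,T)\times(\Om\cup\Gamma_N))$, I would take the centre-valued projection $\mathbb{P}^\ptTau_c\ph$ as discrete test function; its discrete gradient converges strongly to $\grad\ph$ by consistency, so the diffusion terms pass to the limit by the weak--strong product. A discrete integration by parts in time (Abel summation) moves the difference quotient onto the test function and reproduces $-\iint_Q v\,\pt\ph-\int_\Om v_0\,\ph(0,\cdot)$, while strong $L^r(Q)$ convergence of $v^{\ptTau,\delt}$ together with the growth bound \eqref{eq:h-conseq} gives $h(v^{\ptTau,\delt})\to h(v)$ in $L^{r'}(Q)$, and the chosen Neumann projection passes the boundary terms to the limit. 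The limit triple solves \eqref{eq:weak-sol-1}, hence by Theorem~\ref{theo:L^2-contraction} it is the unique weak solution, which upgrades the subsequential convergence to convergence of the whole family. Strong convergence of the gradients then follows by an energy argument: passing to the limit in the summed discrete energy identity and comparing with the continuous energy balance shows that $\Aleft\bM_{i,e}^\ptTau\grad^\ptTau u_{i,e}^{\ptTau,\delt},\grad^\ptTau u_{i,e}^{\ptTau,\delt}\Aright_\Om$ converges to $\iint_Q\bM_{i,e}\grad u_{i,e}\cdot\grad u_{i,e}$, which with weak convergence and uniform coercivity yields strong convergence in $\bigl(L^2(Q)\bigr)^3$.

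The main obstacle I expect lies in part~(ii). There the reaction term $(\tilde b(v^{\ptTau,n})-L)\,v^{\ptTau,n+1}-l$ freezes the nonlinear coefficient at the previous time level, so to recover $h(v)=(\tilde b(v)-L)v-l$ in the limit one must additionally control the cross term $\tilde b(v^{\ptTau,n})\,(v^{\ptTau,n+1}-v^{\ptTau,n})$. Since $\tilde b(z)$ grows like $|z|^{r-2}$, this product lives in a borderline Lebesgue space; combining the $L^\infty(L^2)\cap L^r(Q)$ interpolation bounds (and the discrete $L^2(0,T;L^6)$ bound coming from the gradient estimate in dimension three) with the $\order{\sqrt{\delt}}$ time-translation estimate leaves enough room only when $r<16/3$, which is the origin of that restriction---note that the physically relevant cubic case $r=4$ is thereby still covered. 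The threshold $\delt<1/(2L)$ of Proposition~\ref{prop:schemeexist} is also used throughout to keep the linearised operator monotone and the estimates uniform in the mesh and time-step parameters.
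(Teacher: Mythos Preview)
Your outline for part~(i) follows the paper's route (energy estimate $\Rightarrow$ compactness $\Rightarrow$ passage to the limit $\Rightarrow$ energy comparison for strong gradient convergence), but the ordering of the strong convergences is off. At the compactness stage you only obtain \emph{weak} $L^2(Q)$ convergence of $u_{i,e}^{\ptTau,\delt}$ and their discrete gradients: there is no evolution equation for $u_i$ or $u_e$ separately, so no time-translate control for them, and Fr\'echet--Kolmogorov does not yield strong $L^2(Q)$ convergence of $u_{i,e}^{\ptTau,\delt}$ at this point. Likewise, from the $L^r(Q)$ bound and strong $L^1$ (hence $L^{r-\eps}$) convergence of $v^{\ptTau,\delt}$ you only get weak $L^r$ convergence, not strong; and $h(v^{\ptTau,\delt})\to h(v)$ holds in $L^1(Q)$, not $L^{r'}(Q)$. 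None of this is fatal: weak convergence of the gradients suffices to pass to the limit in the linear diffusion terms against the strongly convergent $\grad^\ptTau\mathbb{P}^\ptTau_c\ph$, and $L^1$ convergence of $h$ suffices against a bounded test function. The strong $L^r(Q)$ convergence of $v^{\ptTau,\delt}$ and the strong $L^2(Q)$ convergence of $u_{i,e}^{\ptTau,\delt}$ come only \emph{after} the energy comparison step, the latter via uniform time translates of the gradients and the discrete Poincar\'e inequality.

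Your account of part~(ii) contains a genuine gap. For the linearised scheme the discrete energy identity does \emph{not} produce an $L^r(Q)$ bound on $v^{\ptTau,\delt}$; the reaction contribution is $\iint_Q b(v^{\ptTau,\delt}(\cdot-\delt))\,|v^{\ptTau,\delt}(\cdot)|^2$, which is a mixed quantity and cannot be bounded below by $\|v^{\ptTau,\delt}\|_{L^r}^r$. So your ``$L^\infty(L^2)\cap L^r(Q)$ interpolation'' is unavailable. The actual mechanism behind $r<16/3$ is different: from $L^\infty(0,T;L^2(\Om))$ and the discrete Sobolev embedding $L^2(0,T;L^6(\Om))$ one interpolates to a uniform $L^{10/3}(Q)$ bound on $v^{\ptTau,\delt}$. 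Since $b(z)\le\beta(1+|z|^{r-2})$, the condition $r-2<10/3$ (i.e.\ $r<16/3$) makes $b(v^{\ptTau,\delt}(\cdot-\delt))$ equi-integrable on $Q$. Combining this with the mixed estimate $\iint_Q b(v^{\ptTau,\delt}(\cdot-\delt))|v^{\ptTau,\delt}|^2\le C$ via the splitting
\[
\iint_E |b(v^{\ptTau,\delt}(\cdot-\delt))\,v^{\ptTau,\delt}|\le \frac1\delta\iint_E b(v^{\ptTau,\delt}(\cdot-\delt))+\delta\iint_Q b(v^{\ptTau,\delt}(\cdot-\delt))|v^{\ptTau,\delt}|^2
\]
yields equi-integrability of the full ionic current $h^{\ptTau,\delt}$; then the Kruzhkov $L^1$ compactness lemma (which here requires only $L^1$ control on the right-hand side of the discrete evolution) and Vitali's theorem give $h^{\ptTau,\delt}\to h(v)$ in $L^1(Q)$. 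The restriction is therefore about equi-integrability of $b(v^{\ptTau,n})$, not about bounding a cross term $b(v^{\ptTau,n})(v^{\ptTau,n+1}-v^{\ptTau,n})$ through an $\mathcal O(\sqrt{\delt})$ time-translation estimate.
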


In the same vein, the standard 2D DDFV construction can be applied to problem
\eqref{S1-p},\eqref{S2},\eqref{S3} on 2D polygonal domains. The convergence result of
Theorem~\ref{th:convergence}(i) remains true, and the one
of Theorem~\ref{th:convergence}(ii) extends to all $r<6$.
We stress that the realistic case $r=4$ is
covered by our convergence results.

\section{Discrete functional analysis tools 
for DDFV schemes}\label{sec:DiscreteCalculus}

For a given mesh $\Tau$ of $\Om$ as described
in Section~\ref{sec:DDFV}, the size of $\Tau$ is defined as
$$
\size(\Tau):=\max\left\{\max\limits_{\ptK\in\overline{\ptMrond}}
\text{diam}(\K)\,,\,\max\limits_{\ptdK\in\overline{\ptdMrond}}\text{diam}(\dK)
\,,\,\max\limits_{\ptD\in\ptDrond}\text{diam}(\DM)   \right\}.
$$
If the assumption $\xK\in\K$   is relaxed, $\text{diam}(\K)$ must be
replaced with $\text{diam}(\K\cap\{\xK\})$ in the above expression.

In what follows, we will always think of a family of meshes
such that $\size(\Tau)$ goes to zero.

\subsection{Regularity assumptions on the meshes}\label{ssec:MeshesRegularity}~

In different finite volume methods, one always needs some qualitative
restrictions on the mesh $\Tau$ (such as, e.g., $\xK\in \K$, or
the convexity of volumes and/or diamonds, or the mesh orthogonality, or
the Delaunay condition on a simplicial mesh). For the convergence
analysis with respect to families of such meshes, it is  convenient
(though not always necessary) to impose shape regularity
assumptions. These assumptions are quantitative: this means that 
the ``distortion'' of certain objects in a mesh
is measured with the help of a regularity constant
$\regmesh$, which is finite for each individual mesh but may get
unbounded if an infinite family of meshes is considered.
For the 3D DDFV meshes presented in this paper, there are
two main mesh regularity assumptions.
First, we require several lower bounds on $\dKL,\ddKdL$:
\begin{equation}\label{eq:mesh-regularity}
\left|
\!\!\!\!\begin{array}{l}
\begin{array}{ll}
\text{$\forall$ neighbours $\K,\L$},\; \diam(\K)+\diam(\L)\leq \regmesh \dKL;\\
\text{$\forall$ dual neighbours $\dK,\dL$},\; \diam(\dK)+\diam(\dL)\leq \regmesh \ddKdL;\\[5pt]
\end{array}\\
\begin{array}{ll}
\text{$\forall$ diamonds $\DM$ with vertices $\xK,\xL$ and with}&\\
\text{neighbour dual vertices $x_\ptdK,x_\ptdL$},
\diam(\DM)\leq \regmesh \min\{\dKL,\ddKdL\}.
\end{array}
\end{array}
\right.
\end{equation}
Further, we need  a bound on the inclination of the
(primal and dual) interfaces with respect to the
(dual or primal) edges:
\begin{equation}\label{eq:inclination-bound}
\left|\begin{array}{l}
\text{$\forall$ primal neighbour volumes $\K,\L$, the angle
$\alpha_{\!\ptK\!,\ptL}$ between $\overrightarrow{\xK x_\ptL}$ and the}\\
\text{plane $\KIL$ is separated from $0$ and $\pi$, meaning that
$\regmesh \cos \alpha_{\!\ptK\!,\ptL}\geq 1$};\\[5pt]
\text{$\forall$ neighbour vertices $\xdK,\xdL$ of $\KIL$, the
angle $\alpha^*_{\!\ptdK\!\!,\ptdL}$ between $\overrightarrow{\xdK \xdL}$}
\\
\text{and $\overrightarrow{x_{\ptdK\ptI\ptdL}x_\ptKIL}$ is separated from $0$ and $\pi$,
i.e., $\regmesh \cos \alpha^*_{\!\ptdK\!\!,\ptdL}\geq 1$}.
\end{array}\right.
\end{equation}

Also a uniform bound on the 
number of neighbours of  volumes / diamonds is useful:
\begin{equation}\label{eq:neighbours-bound}
\left|\begin{array}{l}
\text{Each primal volume $\K$ has at most $\regmesh$ neighbour primal volumes};\\
\text{each dual volume $\dK$ has at most $\regmesh$ neighbour dual volumes}.
\end{array}\right.
\end{equation}
For version $(C)$ of the scheme, we impose in addition
conditions on the third mesh $\diezMrond$;
moreover, the number of vertices of a diamond is restricted by $\regmesh$.
Recall that for versions $(A)$ and $(B)$ we
assume that all diamond has five ($=2+3$)
or six ($=2+4$) vertices, because the faces of the
primal volumes are assumed to be triangles or quadrilaterals; and the
convergence results are shown for the case of triangular primal faces.

For versions $(A)$ and $(B)$, when the number $l$ of vertices
of a face $\KIL$ exceeds three, the kernel of the linear form
used to reconstruct the discrete gradient in $\DM^\ptKIL$ is not
reduced to a constant at the vertices of $\DM^\KIL$.
This is a problem, e.g., for the discrete Poincar\'e inequality and
for the proof of discrete compactness. In general, the situation
with $l\leq 4$ vertices is not clear; for example, the discrete Poincar\'e inequality
holds on every individual mesh, but it is not an easy task to prove that
the embedding constant is uniform, even under rigid proportionality
assumptions on the meshes. The uniform Cartesian meshes
is one case with $l=4$ that can be treated
(see \cite{AndrBend}), but they are not suitable
for the application we have in mind.

In this paper, for a certain range of values of the power $r$
in \eqref{eq:h-conseq},\eqref{monotone-bis}, we use
Sobolev embedding inequalities of the discrete $H^1$
spaces into $L^q$, $q>1$; for these results to hold,
we may also require
\begin{equation}\label{eq:volume-ratios}
\left|\begin{array}{lrcl}
\text{$\forall$ primal volumes $\K$ and interfaces $\KIL$},
\mKIL\dKL \leq \regmesh\,\Vol(\K);\\
\text{$\forall$ dual volumes $\K$ and interface $\dKIdL$},
\mdKIdL\ddKdL \leq   \regmesh\,\Vol(\dK).
\end{array}\right.
\end{equation}

\subsection{Consistency of projections and discrete gradients}\label{ssec:proj-ops}

Here we gather basic consistency results for the DDFV discretisations.
Heuristically,  for a given function $\ph$  on $\Om$, the projection of
$\ph$ on a mesh $\Tau$ and subsequent application of the discrete
gradient $\grad^\ptTau$ should produce a discrete field sufficiently close
(for $\size(\Tau)$ small) to $\grad\ph$. Similarly,
for a given field $\vec \Frond$, the adequate
projection on the mesh and the application
of $\div^{\!\!\ptTau}$ to this projection should yield a discrete
function close to $\div \vec \Frond$.
In this paper, we mainly use the mean-value projections.
For scalar functions on $\Om$, two projections on $\R^\ptTau$ (which
has two components, namely the
projections on $\Mrond$ and on $\dMrond$) are used:
$$
\mathbb{P}^\ptTau:\;\ph\;\mapsto\; \left(\;\Bigl(\,\frac 1{\Vol(\K)}
\int_\ptK \ph\,\Bigr)_{\ptK\in\Mrond}   \;,\;
\Bigl(\,\frac 1{\Vol(\dK)} \int_\ptdK \ph\,\Bigr)_{\ptdK\in\dMrond}   \;\right)
=: \left(\; \mathbb{P}^\ptMrond\!\ph\;,\;
\mathbb{P}^\ptdMrond\!\!\ph \;\right),
$$
$$
\mathbb{P}_c^\ptTau:\;\ph\;\mapsto\; \left(\;\Bigl(\,\ph(x_\ptK)\,\Bigr)_{\ptK\in\Mrond}   \;,\;
\Bigl(\,\ph(x_\ptdK)\,\Bigr)_{\ptdK\in\dMrond}   \;\right)
=: \left(\; \mathbb{P}_c^\ptMrond\!\ph\;,\;
\mathbb{P}_c^\ptdMrond\!\!\ph \;\right);
$$
in case $\K$ is a degenerate volume in $\MrondGN$, $\Vol(\K)$ is zero and we replace the
 corresponding entry of $\mathbb{P}^{\ptTau}\ph$ by the mean 
 value $\meanint_\ptK \ph$ of $\ph$ over the face $\K\subset\Gamma_N$.
 Similarly, the Neumann data $s_{i,e}$ will be taken into account 
 through the values $\meanint_\ptK s_{i,e}$ for $\K\in \MrondGN$.

Further, if we are interested in the  values of  $\ph$ on the Dirichlet part of the boundary,
 then we use the projection
 \begin{align*}
 \mathbb{P}^{\ptl\ptTau}:\;\ph\;\mapsto\;
  \left(\;\Bigl(\,\dspmeanint_\ptK \ph\,\Bigr)_{\ptK\in\ptl\Mrond}   \;,\;
 \Bigl(\,\dspmeanint_{\overline{\ptdK}\cap\Gamma_D} \ph\,\Bigr)_{\ptdK\in\ptl\dMrond}   \;\right)
 =: \left(\; \mathbb{P}^{\ptl\ptMrond}\!\ph\;,\;
\mathbb{P}^{\ptl\ptdMrond}\!\!\ph \;\right).
 \end{align*}
In particular, the Dirichlet data $g_{i,e}$ will be taken into account in this way. 
For $\R^3$-valued fields on $\Om$, we use the projection on $(\R^3)^\ptDrond$
defined by
$$
\vec{\mathbb{P}}^\ptTau:\;\vec \Frond\; \mapsto\;
\left(\;\frac 1{\Vol(\DM)}
\int_\ptD \vec\Frond \;\right)_{\ptD\in\ptDrond}.
$$

\medskip
With each of these discrete functions, we associate 
piecewise constant functions of $x$ on $\Om$, on $\Gamma_D$ or on $\Gamma_N$,
according to the sense of the projection; then we can 
study convergence, e.g., of $\vec{\mathbb{P}}^\ptTau \vec\Frond$ 
to $\vec\Frond$ in Lebesgue spaces, as $\size(\Tau)\to 0$.
For the data $v_0$,$\Iap$, $\bM_{i,e}$, $g_{i,e}$, $s_{i,e}$, we need 
the consistency of the associated projection operators
(recall that $v_0$,$\Iap$ are projected on the meshes $\MrondOm$ and $\dMrond$, $\bM_{i,e}$ are projected on the diamonds, $g_{i,e}$ are projected
on the boundary volumes, and $s_{i,e}$ are projected on 
the degenerate interior primal volumes $\K\in\MrondGN$).
These consistency results can be shown in a straightforward 
way (see, e.g., \cite{ABH``double''}); for example, we have
$\mathbb{P}^\ptTau\,\Iap \longrightarrow \Iap$ in $L^2(\Om)$, and $\mathbb{P}^{\ptl\ptTau}\,g_{i,e} \longrightarrow g_{i,e}$ in $L^2(\Gamma_D)$.

\medskip
Note that for the study of weak compactness in Sobolev spaces
and convergence of discrete solutions, the consistency
results can be formulated for test functions only
(and the consistency for $\div^{\!\!\ptTau}\!\!\circ\! \vec{\mathbb{P}}^\ptTau$ is
formulated in a weak form, except on very symmetric meshes).
These results are shown under the regularity restrictions
\eqref{eq:mesh-regularity},\eqref{eq:inclination-bound},\eqref{eq:neighbours-bound} on the mesh;
let us give the precise statements.

\begin{prop}\label{PropConsistency}
Let $\Tau$ be a 3D DDFV mesh of $\Om$
as described in Section~\ref{sec:DDFV}.
Let $\regmesh$ measure the mesh regularity in the
sense of \eqref{eq:mesh-regularity},\eqref{eq:inclination-bound},\eqref{eq:neighbours-bound}.
Then the following results hold: \\[5pt]
(i)~~ For all $\ph\in \mathcal D(\overline{\Om})$,
$$
\Bigl\|\ph-\mathbb{P}^\ptMrond\!\ph\Bigr\|_{L^\infty(\Om)}\leq C(\ph)\,\size(\Tau), \quad
\Bigl\|\ph-\mathbb{P}^\ptdMrond\!\!\ph\Bigr\|_{L^\infty(\Om)}\leq C(\ph)\,\size(\Tau);
$$
and for version $(C)$, the analogous estimates hold for
$\Bigl\|\ph-\mathbb{P}^\ptdiezMrond\!\ph\Bigr\|_{L^\infty(\Om)}$.\\
Analogous estimates hold for the projections $\mathbb{P}^\ptMrond_c,\mathbb{P}^\ptdMrond_c$.

Similarly, for all  $\vec\Frond\in  \Bigl(\mathcal D(\overline{\Om})\Bigr)^3$,
$$
\Bigl\|\vec\Frond-\vec{\mathbb{P}}^\ptTau\!\!\vec
\Frond\Bigr\|_{L^\infty(\Om)}\leq C(\vec\Frond)\,\size(\Tau). $$
(ii)~~  For all $\ph\in \mathcal D(\Om\cup\Gamma_N)$,
$$
\Bigl\|\grad\ph-\grad^\ptTau_0
(\mathbb{P}_c^\ptTau\!\ph)\Bigr\|_{L^\infty(\Om)}
\leq C(\ph,\regmesh)\,\size(\Tau). $$
(iii)~~ For versions $(A)$ and $(B)$, assume that each
primal interface $\KIL$ is a triangle. For each
$\vec\Frond\in  \Bigl(\mathcal D(\overline{\Om})\Bigr)^3$ and
for all $w^\ptTau\in \R^\ptTau_0$,
$$
\biggl|\Bleft \mathbb{P}^\ptTau\!\Bigl(\div\vec\Frond\Bigr)
-\div^{\!\!\ptTau} (\vec{\mathbb{P}}^\ptTau\!\vec\Frond)
\;,\;w^\ptTau\Bright_{\Om}\biggr|\leq C(\vec\Frond,\regmesh)
\,\size(\Tau)\,\|\grad^\ptTau w^\ptTau\|_{L^1(\Om)}. $$
\end{prop}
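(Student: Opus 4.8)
The plan is to treat the three assertions in order of increasing difficulty, the genuine work being in~(iii). For~(i), I would argue pointwise on each volume. If $\ph\in\mathcal D(\overline\Om)$ and $x\in\K$ for a primal volume $\K$, the first-order Taylor inequality gives $|\ph(x)-\ph(\xK)|\le\diam(\K)\,\|\grad\ph\|_{L^\infty(\Om)}\le\size(\Tau)\,\|\grad\ph\|_{L^\infty(\Om)}$, which is the bound for $\mathbb{P}^\ptMrond_c$; averaging this over $\K$ yields the same estimate for $\mathbb{P}^\ptMrond$. The identical argument on dual volumes, on the volumes of $\diezMrond$ in version~$(C)$, and componentwise on diamonds for $\vec{\mathbb{P}}^\ptTau$, completes~(i). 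No mesh regularity is used here.

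For~(ii) the crucial input is that the discrete gradient is exact on affine functions (Remark~\ref{rem:consistAffines}). Fix a diamond $\DM=\DM^\ptKdotIKplus$ and let $P$ be the first-order Taylor polynomial of $\ph$ at the face centre $\xdotplus$, so that $\grad P\equiv\grad\ph(\xdotplus)$. By linearity of $\grad^\ptTau_0$ and exactness on affines, for $x\in\DM$ one has $\grad_{\!\ptD}(\mathbb{P}^\ptTau_c\ph)-\grad\ph(x)=\bigl(\grad\ph(\xdotplus)-\grad\ph(x)\bigr)+\grad_{\!\ptD}(\mathbb{P}^\ptTau_c(\ph-P))$. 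The first bracket is $O(\size(\Tau))$ by smoothness. For the second term I would insert the nodal values $(\ph-P)(\cdot)=O(\size(\Tau)^2)$ into the explicit representation \eqref{eq:grad-representation}: the normal component divides a second-order-small difference by $\ddotplus$, while the tangential component divides second-order-small nodal differences by the mixed products in \eqref{eq:interpolation}. The inclination bound \eqref{eq:inclination-bound} ensures that $\overrightarrow{\xdot\xplus}\cdot\Ndotplus$ and the relevant mixed products are bounded below by a fixed fraction of the products of the corresponding lengths, so both quotients are $O(\size(\Tau))$ with a constant depending only on $\regmesh$. This is the sole place where shape regularity enters~(ii).

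Part~(iii) is the heart of the proof and I would establish it by duality. First, the exact discrete duality \eqref{eq:discr-duality} turns $\Bleft\div^{\!\!\ptTau}(\vec{\mathbb{P}}^\ptTau\vec\Frond),w^\ptTau\Bright_\Om$ into $-\Aleft\vec{\mathbb{P}}^\ptTau\vec\Frond,\grad^\ptTau_0 w^\ptTau\Aright_\Om$ plus a Neumann pairing on $\Gamma_N$. Separately, the Green--Gauss theorem rewrites $\Bleft\mathbb{P}^\ptTau(\div\vec\Frond),w^\ptTau\Bright_\Om$ as the weighted sum (with weights $\tfrac13$ and $\tfrac23$) of the exact fluxes $\int_{\ptl\K}\vec\Frond\cdot n_\ptK$ and $\int_{\ptl\dK}\vec\Frond\cdot n_\ptdK$. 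Subtracting and regrouping interface by interface, the nodal values $w_\ptK,w_\ptdK$ assemble into the differences $w_\ptKplus-w_\ptKdot$ and $w^*_{i+1}-w^*_i$ that build $\grad_{\!\ptD}w^\ptTau$ in \eqref{eq:grad-representation} --- this is precisely the summation-by-parts underlying discrete duality, but carrying the true field $\vec\Frond$ instead of its diamond average. What survives in each diamond is the flux consistency error $\int(\vec\Frond-(\vec{\mathbb{P}}^\ptTau\vec\Frond)_\ptD)\cdot n$, paired against those $w$-differences. Because the flux of a constant vector through a closed polyhedral boundary vanishes, I may subtract the diamond mean and apply~(i), namely $\|\vec\Frond-\vec{\mathbb{P}}^\ptTau\vec\Frond\|_{L^\infty(\DM)}\le C(\vec\Frond)\,\size(\Tau)$, to bound each diamond's contribution by $C\,\size(\Tau)\,\Vol(\DM)\,|\grad_{\!\ptD}w^\ptTau|$. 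Summing over diamonds and using $\sum_{\ptD}\Vol(\DM)\,|\grad_{\!\ptD}w^\ptTau|=\|\grad^\ptTau w^\ptTau\|_{L^1(\Om)}$ together with the regularity bounds \eqref{eq:mesh-regularity},\eqref{eq:neighbours-bound} gives the claimed estimate $C(\vec\Frond,\regmesh)\,\size(\Tau)\,\|\grad^\ptTau w^\ptTau\|_{L^1(\Om)}$.

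The main obstacle is the bookkeeping in~(iii): matching, face by face, the continuous flux errors to the exact signs and weights $(\tfrac13,\tfrac23)$ that define $\grad_{\!\ptD}w^\ptTau$ and $\Bleft\cdot,\cdot\Bright_\Om$, so that the summation-by-parts closes exactly as in \cite{AndrBend}; and, in tandem, checking that the Neumann pairing produced by duality cancels, up to $O(\size(\Tau))$, the genuine boundary integral $\int_{\Gamma_N}(\vec\Frond\cdot n)\,w^{\ptl\ptTau}$ coming from Green's theorem, with the degenerate volumes $\K\in\MrondGN$ and the convention \eqref{eq:division-by-zero-convention} treated consistently. Once this algebra is arranged, only the elementary estimates above remain.
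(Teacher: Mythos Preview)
The paper itself does not prove this proposition: immediately after the statement it says ``We refer to \cite{AndrBend} for a proof of this result.'' So there is no in-paper argument to compare against; your sketch is the natural route and is almost certainly the one taken in \cite{AndrBend}. Parts~(i) and~(ii) are handled correctly.

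For part~(iii) your duality strategy is right, but you should make explicit where the hypothesis ``each primal interface $\KIL$ is a triangle'' is actually used. After the summation by parts you describe, the dual-mesh contribution to the error is a sum over dual interfaces of terms of the form $(w_\ptdL-w_\ptdK)\int_{\dKIdL}(\vec\Frond-\vec\Frond_\ptD)\cdot n$. To bound each such term by $C\,\size(\Tau)\,\Vol(\DM)\,|\grad_{\!\ptD}w^\ptTau|$ you need $|w_\ptdL-w_\ptdK|\le \ddKdL\,|\grad_{\!\ptD}w^\ptTau|$, i.e.\ the second inequality in \eqref{eq:divided-differences}. That control of the tangential divided differences by the diamond gradient is exactly what fails when the face has $l\ge 4$ vertices (the reconstruction kernel is then nontrivial), and it is the reason the paper imposes $l=3$ in~(iii). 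Your write-up currently buries this inside ``the nodal values assemble into the differences that build $\grad_{\!\ptD}w^\ptTau$''; it would be cleaner to invoke \eqref{eq:divided-differences} directly and then use the volume comparison $m_\ptdKIdL\,\ddKdL\le C(\regmesh)\,\Vol(\DM)$ from \eqref{eq:inclination-bound} (or \eqref{eq:volumes-compared}) to close the estimate.
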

We refer to \cite{AndrBend} for a proof of this result.

\subsection{Discrete Poincar\'e, Sobolev inequalities and
strong compactness}\label{ssec:PoincareSobolev}
~\\[3pt]
The key fact here is the following remark:
$$
\begin{array}{l}
\text{Assuming (for versions $(A)$, $(B)$) that each face $\KIL$ of
$\Mrond$ is a triangle},\\  \text{one gets the same
embedding results on the 3D DDFV meshes $(A),(B),(C)$}\\
\text{as the results known for the two-point
discrete gradients on $\Mrond$ and on $\dMrond$.}
\end{array}
$$
Indeed, for variants $(A),(B)$ it has been
already observed in the proof of Proposition~\ref{PropConsistency}(iii)
that the restriction $l=3$ on the number $l$ of dual 
vertices of a diamond $\DM^{\ptKdotIKplus}$
allows for a control by $|\grad_\ptD w^\ptTau|$
of the divided differences:
\begin{equation}\label{eq:divided-differences}
\frac{|\wplus-\wdot|}{\ddotplus} \leq
\Bigl| \grad_\ptD w^\ptTau\Bigr|,\qquad
\frac{|w^*_{\!i\hspace*{-1pt}+\!1}\!\!-\!w^*_i|}{\diiun} \leq
\Bigl| \grad_\ptD w^\ptTau\Bigr|
\end{equation}
(here $i=1,2,3$ and  by our convention, $w^*_4:=w^*_1$,
$d_{3,4}:=d_{1,3}$; cf.~Figure~\ref{Fig-3D}).
For version $(C)$, this kind of control is always true
for the divided differences along the edges of any of the three meshes.
Consequently, for a proof of the different embeddings,
we can treat the primal and the dual meshes in $\Tau$
separately, as if our scheme was one
with the two-point gradient reconstruction.

\medskip

First we give discrete DDFV versions of the embeddings
of the discrete $W^{1,p}_0(\Om)$ spaces,
where we refer to the embedding into
$L^p(\Om)$ (the Poincar\'e inequality), into $L^{p^*}(\Om)$ with
$p^*:=\frac{3p}{3-p}$, $p<3$ (the critical Sobolev embedding), as
well as the compact embeddings into $L^q(\Om)$ for all $q<+\infty$   or $q<p^*$.

\begin{prop}\label{prop:Poincare-and-Sobolev} 
Let $\Tau$ be a 3D DDFV mesh of $\Om$ as described in Section~\ref{sec:DDFV}. 
Let $\regmesh$   measure the mesh regularity in the 
sense \eqref{eq:inclination-bound} and \eqref{eq:volume-ratios}.
Assume (for versions $(A)$ and $(B)$) that 
each primal interface $\KIL$ is a triangle.

Let $w^\ptTau\in\R^\ptTau_0$. Then
$$
\|w^\ptMrond\|_{L^2(\Om)},\,\|w^\ptdMrond\|_{L^2(\Om)}\;
\leq\;C(\Om,\regmesh)\;\|\grad^\ptTau w^\ptTau\|_{L^2(\Om)}.
$$
Moreover,
$$
\|w^\ptMrond\|_{L^{6}(\Om)},\,\|w^\ptdMrond\|_{L^{6}(\Om)}\;\leq\;C(\Om,\regmesh)\;
\|\grad^\ptTau w^\ptTau\|_{L^2(\Om)}.
$$
\end{prop}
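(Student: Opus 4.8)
The plan is to reduce both inequalities to the classical discrete Poincar\'e and Sobolev inequalities that are known for two-point finite-volume gradients, applied separately to the primal component $w^\ptMrond$ on $\Mrond$ and to the dual component $w^\ptdMrond$ on $\dMrond$, and then to run the standard discrete Gagliardo--Nirenberg bootstrap. First I would exploit the divided-difference control \eqref{eq:divided-differences}, available precisely because each face $\KIL$ is a triangle. Starting from $|\wplus-\wdot|/\ddotplus\leq |\grad_\ptD w^\ptTau|$, multiplying by $\Vol(\DM)$ and summing over diamonds, and using the inclination bound \eqref{eq:inclination-bound} in the form $\mKIL\,\dKL\leq C(\regmesh)\,\Vol(\DM)$ (which follows since $\Vol(\DM)=\tfrac13\mKIL\,h$ with the normal height $h$ comparable to $\dKL$), I obtain $\sum_{\KIL}\frac{\mKIL}{\dKL}\,|w_\ptK-w_\ptL|^2\leq C(\regmesh)\,\|\grad^\ptTau w^\ptTau\|_{L^2(\Om)}^2$; the second divided difference and the second inclination bound give the analogous estimate on the dual mesh with $\mdKIdL,\ddKdL$. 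This makes precise the heuristic that, for these embeddings, $\Tau$ behaves like a pair of two-point meshes, and this reduction is the genuinely DDFV-specific ingredient (it is the observation already used for Proposition~\ref{PropConsistency}(iii); see \cite{AndrBend}).

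Next I would prove the $L^6$ bound on each mesh by the discrete Sobolev bootstrap. Extending $w^\ptMrond$ by zero outside $\Om$ (legitimate since $w^\ptTau\in\R^\ptTau_0$), the core ingredient is the discrete endpoint embedding $\|w^\ptMrond\|_{L^{3/2}(\Om)}\leq C\sum_{\KIL}\mKIL\,|w_\ptK-w_\ptL|$, i.e.\ the discrete $BV\hookrightarrow L^{3/2}$ inequality in dimension three, obtained by the usual coordinate-direction argument: bound $|w_\ptK|$ along each of the three axes by the jumps across the faces crossed, integrate over the transverse variables, and take the product of the three directional bounds. I would then apply this endpoint inequality not to $w^\ptMrond$ but to $|w^\ptMrond|^4$: since $\bigl||w_\ptK|^4-|w_\ptL|^4\bigr|\leq 4(|w_\ptK|+|w_\ptL|)^3|w_\ptK-w_\ptL|$, Cauchy--Schwarz splits the resulting sum as $\bigl(\sum_{\KIL}\mKIL\,\dKL\,(|w_\ptK|+|w_\ptL|)^6\bigr)^{1/2}$ times the primal two-point seminorm from the first paragraph. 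The volume ratios \eqref{eq:volume-ratios} together with the neighbour bound \eqref{eq:neighbours-bound} convert the first factor into $C\,\|w^\ptMrond\|_{L^6(\Om)}^3$, yielding $\|w^\ptMrond\|_{L^6(\Om)}^4\leq C\,\|w^\ptMrond\|_{L^6(\Om)}^3\,\|\grad^\ptTau w^\ptTau\|_{L^2(\Om)}$, hence the claimed bound after dividing. The same computation applies verbatim to $w^\ptdMrond$. The $L^2$ Poincar\'e inequality then costs nothing on the bounded domain $\Om$, by H\"older: $\|w^\ptMrond\|_{L^2(\Om)}\leq |\Om|^{1/3}\,\|w^\ptMrond\|_{L^6(\Om)}$, and likewise for $w^\ptdMrond$ (alternatively it follows from the same, simpler, directional integration performed directly in $L^2$).

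The main difficulty is not the reduction or the algebra but the uniformity of every constant: I must ensure that each bound depends only on $\Om$ and $\regmesh$, and not on the individual mesh of the family. This is delicate exactly at the endpoint $BV\hookrightarrow L^{3/2}$ step, where the mesh faces have arbitrary orientations on a distorted mesh while the Gagliardo argument integrates along the fixed coordinate axes; reconciling the two requires \eqref{eq:inclination-bound} (so that each face contributes a controlled component to each axis direction and no direction is effectively lost) and \eqref{eq:volume-ratios} (to pass between face-weighted sums and Lebesgue integrals), with \eqref{eq:neighbours-bound} keeping the combinatorial multiplicities bounded. Once the first-paragraph reduction is in place, the remaining work is the classical two-point analysis, for which I would follow the lines of \cite{AndrBend} and \cite{ABH``double''}.
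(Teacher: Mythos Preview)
Your proposal is correct and follows essentially the same approach as the paper: reduce to the two-point-gradient setting on each of the meshes $\Mrond$ and $\dMrond$ via the divided-difference control \eqref{eq:divided-differences} (this is exactly the key remark the paper makes), and then invoke the classical discrete Sobolev arguments from \cite{EyGaHe:book,CoudiereGH,EGH:Sushi-IMAJNA}. Your explicit one-step Gagliardo--Nirenberg (apply the discrete $BV\hookrightarrow L^{3/2}$ endpoint to $|w^\ptMrond|^4$ and use Cauchy--Schwarz) is precisely the content of those references in the Dirichlet case, so there is no genuine divergence from the paper.

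One small point: you invoke \eqref{eq:neighbours-bound} when converting $\sum_{\KIL}\mKIL\,\dKL\,(|w_\ptK|+|w_\ptL|)^6$ into $C\|w^\ptMrond\|_{L^6(\Om)}^6$, but the proposition as stated only assumes \eqref{eq:inclination-bound} and \eqref{eq:volume-ratios}. You can avoid the neighbour bound by using instead $\mKIL\,\dKL\leq C(\regmesh)\,\Vol(\DM^\ptKIL)$ from \eqref{eq:inclination-bound} (this is part of the paper's bound \eqref{eq:volumes-compared}) and the fact that the diamonds partition $\Om$; then one only needs a uniform lower bound on $\Vol(\DM\cap\K)/\Vol(\DM)$, which again follows from \eqref{eq:inclination-bound}. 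This keeps your argument within the stated hypotheses.
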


Notice that for the Poincar\'e inequality (the first statement), assumption
\eqref{eq:volume-ratios}
is not needed, cf.~\cite{AGW} for a proof.
Actually, with the hint of \cite[Lemma 2.6]{AGW}
the Sobolev embeddings for $q\leq 2\times1^*=3$ can be
obtained without using \eqref{eq:volume-ratios}.

The statement  follows in a very direct way from the proofs given in
 \cite{EyGaHe:book,CoudiereGH,EGH:Sushi-IMAJNA}.
Because of \eqref{eq:divided-differences}, the assumption
that the primal mesh faces are triangles (i.e., $l=3$)
is a key assumption for the proof.
In some of the proofs in these papers one refers
to admissibility assumptions on the mesh
(such as the mesh orthogonality and  assumptions of
the kind ``$|\xK-\xL|\leq \regmesh |\xK-\xKIL|$'',
see \cite{EyGaHe:book,EGH:Sushi-IMAJNA}), yet, as
in \cite{ABH``double''} (where the proof of the
Poincar\'e inequality is given for the $2D$ case),
these assumptions are easily replaced by the bounds
\begin{equation}\label{eq:volumes-compared}
\begin{split}
\mKIL\dKL &\leq C(\regmesh)\;
\min\Bigl\{\Vol(\ptD^\ptKIL),\Vol(\K),\Vol(\L)\Bigr\},\\
\mdKIdL\ddKdL &\leq  C(\regmesh)\;
\min\Bigl\{\Vol(\ptD^\ptKIL_{\ptdKIdL}),\Vol(\dK),\Vol(\dL)\Bigr\}
\end{split}
\end{equation}
that stem from the mesh regularity
assumptions  \eqref{eq:volume-ratios}
and \eqref{eq:inclination-bound}.

The embeddings of the discrete
$W^{1,p}(\Om)$ space contain an additional
term in the right-hand side, which is usually taken to be either
the mean value of $w^\ptTau$ on some fixed
part $\Gamma$ of the boundary $\ptl\Om$
(used when a non-homogeneous Dirichlet boundary
condition on $\Gamma$ is imposed),
or the mean value of   $w^\ptTau$ on some subdomain
$\omega$ of $\Om$ (the simplest choice
is $\omega=\Om$, used for the pure Neumann boundary conditions).
Let us point out that the strategy of Eymard, Gallou\"et and Herbin in
\cite{EGH:Sushi-IMAJNA} actually allows to obtain Sobolev
embeddings for the ``Neumann case'' as soon as the Poincar\'e inequality
is obtained. For the proof, one bootstraps the estimate
of $\int_{\Om} |w^\ptTau|^\alpha$. First obtained
from the Poincar\'e inequality with $\alpha=2$, it is extended to
$\alpha=2\cdot 1^*=2\frac 32 =3$ with the discrete
variant \cite[Lemma 5.2]{EGH:Sushi-IMAJNA} (where
one can exploit \eqref{eq:volumes-compared})
of the Nirenberg technique. In the same way, the bound of $\int_{\Om} |w^\ptTau|^\alpha$
is further extended to $\alpha=2(1^*)^2=2\Bigl( \frac 32\Bigr)^2$ and so on, until one
reaches the critical exponent $2^*=6$. The details are given in
\cite{ABR-Cross-VF}. Moreover, the Poincar\'e inequality
for the ``Neumann case'' (i.e., the embedding into $L^2(\Om)$ of
the discrete analogue of the space $\Bigl\{ u\in H^1(\Om)\;\Bigl|\;\int_\Om u=0\Bigr\}$) 
and for the case with control by the mean value on a part of the boundary, was
shown in \cite{EGH:Sushi-IMAJNA}, \cite{GlitzkyGriepentrog}.
Thus we can assume that the  analogue of Proposition~\ref{prop:Poincare-and-Sobolev}
with the additional terms $\Bigl|\frac 1{\Vol(\Om)}\int_\Om w^\ptMrond\Bigr|$,
$\Bigl|\frac 1{\Vol(\Om)}\int_\Om w^\ptdMrond\Bigr|$
or $\Bigl|\frac 1{|\Gamma_D|}\int_{\Gamma_D} w^\ptMrond\Bigr|$,$\Bigl|\frac 1{|\Gamma_D|}\int_{\Gamma_D} w^\ptdMrond\Bigr|$  in the right-hand side
of the estimates is justified.

Notice that the compactness of the sub-critical embeddings is easy to obtain by interpolation
of the $L^6$ embedding with the compact $L^1$ embedding
derived from the Helly theorem (indeed, the $L^1$ estimate 
of $\grad^\ptTau w^\ptTau$ can be seen as the $BV$ estimate of
the piecewise constant functions $w^\ptMrond$ and $w^\ptdMrond$).

Finally, notice that the same arguments that yield the
Poincar\'e inequality with a homogeneous boundary condition
also yield the trace inequality
\begin{equation}\label{eq:trace-ineq}
 \Bigl\|w^{\ptl\ptMrond}\Bigr\|_{L^2(\Gamma_N)}
 \leq  C(\Gamma_N,\Om,\regmesh) \biggl(\Bigl\|w^\ptMrond\Bigr\|_{L^2(\Om)}+
 \Bigl\|\grad^\ptTau w^\ptTau\Bigr\|_{L^2(\Om)} \biggr)
\end{equation}
(the inequalities on the mesh $\dMrond$ and, for the
case $(C)$, on the mesh $\diezMrond$ are completely analogous).
These inequalities are useful for treating
non-homogeneous Neumann boundary conditions
on a part $\Gamma_N$ of $\ptl\Om$.

\subsection{Discrete $W^{1,p}(\Om)$ weak compactness}\label{ssec:W1p}

In relation with Proposition~\ref{prop:Poincare-and-Sobolev}(ii), let
us stress that there is no reason that the components
$w^{\ptMrond_{\!h}}$, $w^{\ptdMrond_{\!h}}$ of a sequence
$\Bigl(w^{\ptTau_h})_h$ of discrete functions with $L^p$ bounded
discrete gradients converge to the same limit.
Counterexamples are constructed starting from two distinct smooth functions
discretised, one on the primal mesh $\overline{\Mrond}$,
the other on the dual mesh $\overline{\dMrond}$.

However, the proposition below shows that in our 3D DDFV framework, we
can assume that the ``true limit'' of the discrete
functions $w^{\ptTau_h}=\Bigl(w^{\ptMrond_{\!h}}$\!,
$w^{\ptdMrond_{\!\!h}} \Bigr)$  or $w^{\ptTau_h}=\Bigl(w^{\ptMrond_{\!h}}\!,
w^{\ptdMrond_{\!\!h}}\!,w^{\ptdiezMrond_{\!\!h}} \Bigr)$
coincides with the limit of \eqref{eq:DiscreteSol}.

\begin{prop}\label{prop:AsymptCompactness}~\\
(i)
Let $w^{\ptTau_h}\in\R^{\ptTau_h}$ be discrete functions on a
family $(\Tau_h)_h$ of 3D DDFV meshes of $\Om$ as
described in Section~\ref{sec:DDFV}, parametrised by $h\geq \size(\Tau_h)$.
Assume $\Gamma_D\neq\text{\O}$
and let $g^\ptTau=\mathbb{P}^\ptTau g$,
for some fixed boundary datum $g\in H^1(\Om)$.
Assume that the family $\Bigl( \grad^{\ptTau_h}_{g^{\ptTau_h}}
w^{\ptTau_h}\Bigr)_{h\in(0,h_{max}]}$ is bounded in ${L^2(\Om)}$.

Assume (for versions $(A)$ and $(B)$) that each primal interface $\KIL$ is a triangle.
Assume that $\sup_{h\in(0,h_{max}]} {{\rm reg}(\Tau_h)}<+\infty$,
where ${\rm reg}(\Tau_h)$ measures the regularity of $\Tau_h$ in the sense
\eqref{eq:mesh-regularity},\eqref{eq:inclination-bound},\eqref{eq:neighbours-bound}
and \eqref{eq:volume-ratios}.

According to the type of 3D DDFV meshing considered, let us
assimilate $w^{\ptTau_h}$ into the piecewise
constant functions $w^{\ptTau_h}(\cdot)$ defined
by \eqref{eq:DiscreteSol}; furthermore, let us assimilate the
discrete gradient $\grad^{\ptTau_h}_{g^{\ptTau_h}} w^{\ptTau_h}$
to the function $\Bigl(\grad^{\ptTau_h}_{g^{\ptTau_h}} w^{\ptTau_h} \Bigr)(\cdot)$ on $\Om$.
Then for any sequence $(h_i)_i$ converging to zero there exists
$w\in g+V$ such that, along a sub-sequence,
\begin{equation}\label{eq:weak-comp-Sobolev}
\left|
\begin{array}{l}
\text{$w^{\ptTau_{h_i}}(\cdot)$ converges to $w$ strongly in
$L^2(\Om)$ (in fact, in $L^q(\Om)$, $q<6$)}\\
\text{and  $\Bigl(\grad^{\ptTau_{h_i}}w^{\ptTau_{h_i}}\Bigr)(\cdot)$
converges to $\grad w$ weakly in $L^2(\Om)$}.
\end{array}\right.
\end{equation}
(ii) If $\Gamma_D=\text{\O}$, and if the
additional assumption of uniform boundedness of
$$
m_{w^{\ptMrond_{\!h}}}:=\frac 1{\Vol(\Om)} \int_\Om w^{\ptMrond_{\!h}}, \qquad
m_{w^{\ptdMrond_{\!\!h}}}:=\frac 1{\Vol(\Om)} \int_\Om w^{\ptdMrond_{\!\!h}}
$$
is imposed (with the analogous bound on the mesh $\diezMrond$ for
version $(C)$), then \eqref{eq:weak-comp-Sobolev}
holds with $w\in H^1(\Om)$.
\end{prop}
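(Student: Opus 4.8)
The plan is to put the primal and dual components on the same footing, extract strong limits of each piecewise constant reconstruction by a discrete compactness argument, and then identify the weak limit of the discrete gradients by means of the discrete duality \eqref{eq:discr-duality} together with the consistency of the discrete divergence from Proposition~\ref{PropConsistency}. Throughout, the standing hypothesis that each primal face $\KIL$ is a triangle is what permits me to invoke the divided-difference control \eqref{eq:divided-differences}, so that $\overline{\Mrond}$ and $\overline{\dMrond}$ may be analysed as if each carried a two-point discrete gradient. First I would establish uniform bounds. In case (i), writing $w^{\ptTau_h}=g^{\ptTau_h}+(w^{\ptTau_h}-g^{\ptTau_h})$ with $w^{\ptTau_h}-g^{\ptTau_h}\in\R^\ptTau_0$ and $g^{\ptTau_h}=\mathbb P^\ptTau g$, the discrete Poincar\'e and Sobolev inequalities of Proposition~\ref{prop:Poincare-and-Sobolev} applied to the homogeneous part yield bounds on $\|w^{\ptMrond_h}\|_{L^6(\Om)}$ and $\|w^{\ptdMrond_h}\|_{L^6(\Om)}$ that are uniform in $h$ (using $\sup_h\reg(\Tau_h)<+\infty$ and $g\in H^1(\Om)$). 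By Cauchy--Schwarz the $L^2$ bound on $\grad^{\ptTau_h}w^{\ptTau_h}$ gives an $L^1$ bound, which through \eqref{eq:divided-differences} is a uniform discrete $BV$ bound on the functions $w^{\ptMrond_h}$ and $w^{\ptdMrond_h}$.

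Next I would extract limits. The discrete $BV$ bound and the Helly (or discrete Kolmogorov) compactness give relative compactness of $w^{\ptMrond_h},w^{\ptdMrond_h}$ in $L^1(\Om)$; interpolating the resulting strong $L^1$ convergence with the uniform $L^6$ bound yields, along a subsequence, strong convergence in every $L^q(\Om)$ with $q<6$, to limits $w_1,w_2$. Consequently the reconstruction \eqref{eq:DiscreteSol}, namely $w^{\ptTau_{h_i}}(\cdot)=\tfrac13 w^{\ptMrond_{h_i}}+\tfrac23 w^{\ptdMrond_{h_i}}$, converges strongly in $L^q$, $q<6$, to $w:=\tfrac13 w_1+\tfrac23 w_2$. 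The $L^2$ bound on the gradients then lets me extract a further subsequence with $\grad^{\ptTau_{h_i}}w^{\ptTau_{h_i}}\weak\chi$ in $L^2(\Om)^3$.

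The heart of the proof is to show $\chi=\grad w$. For a test field $\vec\psi\in\mathcal D(\Om)^3$, the discrete duality \eqref{eq:discr-duality} (equivalently, discrete integration by parts, with boundary terms absent since $\vec\psi$ vanishes near $\ptl\Om$) gives
\[
\Aleft \grad^{\ptTau_{h_i}}w^{\ptTau_{h_i}},\,\vec{\mathbb P}^{\ptTau_{h_i}}\vec\psi\Aright_\Om
= -\Bleft w^{\ptTau_{h_i}},\,\div^{\ptTau_{h_i}}\!\vec{\mathbb P}^{\ptTau_{h_i}}\vec\psi\Bright_\Om .
\]
The left-hand side converges to $\int_\Om\chi\cdot\vec\psi$ by weak--strong convergence, using $\vec{\mathbb P}^\ptTau\vec\psi\to\vec\psi$ from Proposition~\ref{PropConsistency}(i). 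On the right-hand side, Proposition~\ref{PropConsistency}(iii) replaces $\div^\ptTau\vec{\mathbb P}^\ptTau\vec\psi$ by $\mathbb P^\ptTau(\div\vec\psi)$ up to an error that vanishes against the $L^1$-bounded discrete gradient; since the pairing $\Bleft\cdot,\cdot\Bright$ carries exactly the weights $\tfrac13,\tfrac23$ of \eqref{eq:DiscreteSol}, the strong $L^q$ convergence of the two reconstructions produces $-\int_\Om w\,\div\vec\psi$. Hence $\int_\Om\chi\cdot\vec\psi=-\int_\Om w\,\div\vec\psi$ for all such $\vec\psi$, that is $\chi=\grad w$ in $\mathcal D'(\Om)$; combined with $\chi\in L^2$ this gives $w\in H^1(\Om)$.

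It remains to pin the boundary behaviour and to treat case (ii). In case (i) the homogeneous part $w^{\ptTau_h}-g^{\ptTau_h}\in\R^\ptTau_0$ is extended by zero across $\Gamma_D$; its discrete gradient stays $L^2$-bounded, so the limit lies in $H^1$ with vanishing trace on $\Gamma_D$, while $g^{\ptTau_h}\to g$ by consistency of the boundary projection, the trace inequality \eqref{eq:trace-ineq} ensuring that the boundary values pass to the limit. This shows $w-g\in V$. In case (ii) the homogeneous-boundary Poincar\'e--Sobolev estimate is replaced by its mean-value version; the assumed uniform bounds on $m_{w^{\ptMrond_h}},m_{w^{\ptdMrond_h}}$ supply the missing additive constants, and the same scheme yields $w\in H^1(\Om)$. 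I expect the main obstacle to be the identification step: ensuring that the $\tfrac13/\tfrac23$-weighted pairing $\Bleft\cdot,\cdot\Bright$ selects precisely the reconstruction \eqref{eq:DiscreteSol} as the limit of the right-hand side, and---more delicately---verifying $w-g\in V$ uniformly, since the primal and dual components need \emph{not} share a common limit and the Dirichlet trace must be recovered from the zero-extension together with \eqref{eq:trace-ineq}.
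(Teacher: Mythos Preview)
Your proposal follows the same overall route as the paper: strong compactness of the piecewise constant reconstructions via discrete Sobolev embeddings (the paper phrases this as ``compactness of subcritical Sobolev embeddings of Proposition~\ref{prop:Poincare-and-Sobolev}'' rather than Helly plus interpolation, but these amount to the same thing), weak $L^2$ compactness of the gradients, and then identification of the weak limit $\chi$ with $\grad w$ via the discrete duality \eqref{eq:discr-duality} combined with Proposition~\ref{PropConsistency}(i),(iii). Your observation that the weights in $\Bleft\cdot,\cdot\Bright_\Om$ match those in \eqref{eq:DiscreteSol} is exactly the point.

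The one place where the paper does something cleaner than you anticipate is the trace step. You take $\vec\psi\in\mathcal D(\Om)^3$, which gives only $\chi=\grad w$ in $\mathcal D'(\Om)$, and then you plan to recover $w-g\in V$ via a zero-extension argument that you yourself flag as delicate. The paper instead takes $\vec\Frond\in\mathcal D(\overline\Om)^3$, i.e.\ test fields that do \emph{not} vanish on the boundary. The same discrete duality argument then yields
\[
\int_\Om \chi\cdot\vec\Frond + \int_\Om w\,\div\vec\Frond = 0
\qquad\text{for all }\vec\Frond\in\mathcal D(\overline\Om)^3,
\]
and this single identity encodes both $\chi=\grad w$ and the vanishing of the trace of $w$ on $\ptl\Om$ (in the homogeneous Dirichlet case the paper treats explicitly). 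No separate zero-extension step is needed. For case (ii) the paper does restrict to compactly supported $\vec\Frond$, exactly as you propose, since no trace information is required there.
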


Let us illustrate the DDFV techniques by giving the
ideas of the proof. We justify (i)  for the
case of the meshing described in Section~\ref{sec:DDFV}
and the homogeneous Dirichlet boundary condition
on $\Gamma_D:=\ptl\Om$. The case of a non-homogeneous
Dirichlet condition is thoroughly treated in \cite{ABH``double''},
for the 2D DDFV schemes. The case of Neumann
boundary conditions is the simplest one.

\begin{proof}[Proof of Proposition~\ref{prop:AsymptCompactness}]
The strong compactness claim follows by the compactness
of the subcritical Sobolev embeddings of
Proposition~\ref{prop:Poincare-and-Sobolev}.
The weak $L^2$ compactness of
the family $\Bigl( \grad^{\ptTau_h} w^{\ptTau_h}\Bigr)_{h}$
is immediate from its $L^2(\Om)$ boundedness.
Thus if $w$ is the strong $L^2$ limit of a sequence $  w^{\ptTau_h}=
\frac 13 w^{\ptMrond_h}+ \frac 23 w^{\ptdMrond_{\!\!h}}$ as $h\to 0$ and
$\chi$ is the weak $L^2$ limit of the associated sequence of
discrete gradients $\grad^{\ptTau_h} w^{\ptTau_h}$, it only remains to show that
$\chi=\grad w$ in the sense of distributions and that $w$ has zero trace on $\ptl\Om$.
These two statements follow from the identity
\begin{equation}\label{eq:id-for-identification}
\forall\, \vec\Frond \in \mathcal D(\overline{\Om})^3 \;\;\;
\int_\Om \chi\cdot \vec\Frond + \int_\Om w\,\div\vec\Frond=0,
\end{equation}
that we now prove. We exploit the discrete duality and the consistency property
of Proposition~\eqref{PropConsistency}(i),(iii).

Take the projection $\vec{\mathbb{P}}^{\ptTau_h}\!
\vec\Frond\in (\R^3)^{\ptDrond_h}$, $w^{\ptTau_h}\in \R_0^{\ptTau_h}$
and write the discrete duality formula
\begin{equation}\label{eq:test-fct-compactness}
\Aleft\grad^{\ptTau_h} w^{\ptTau_h}\, ,
\, \vec{\mathbb{P}}^{\ptTau_h}\! \vec\Frond
\Aright_{\Om} \;+\;\Bleft
w^{\ptTau_h}\, ,\, \div^{\!\!\ptTau_h} \vec{\mathbb{P}}^{\ptTau_h}\! \vec\Frond
\Bright_{\Om}=0.
\end{equation}
According to the definition of $\Aleft \,\cdot\, ,\, \cdot\,\Aright_{\Om} $,
the first term in \eqref{eq:test-fct-compactness} is precisely the integral over $\Om$
 of the scalar product of the constant per diamond fields
 $\grad^{\ptTau_h} w^{\ptTau_h}$ and $\vec{\mathbb{P}}^{\ptTau_h}\! \vec\Frond$.
 By  Proposition~\eqref{PropConsistency}(i)
and the definition of $\chi$, this term converges to the first
term in \eqref{eq:id-for-identification} as $h\to 0$.
Similarly, introducing the projection
$\mathbb{P}^{\ptTau_h}\!\Bigl(\div\vec\Frond\Bigr)$
of $\div\vec\Frond$ on $\R^{\ptTau_h}$, from the
definition of $\Bleft \,\cdot\, ,\, \cdot\,\Bright_{\Om}$,
Proposition~\eqref{PropConsistency}(i)
and the definition of $w^{\ptTau_h}$ in \eqref{eq:DiscreteSol},
we see that, as $h\to 0$,
\begin{align*}
\;\Bleft
w^{\ptTau_h}\, ,\,\mathbb{P}^\ptTau\!\Bigl(\div\vec\Frond\Bigr)\
\Bright_{\Om} \longrightarrow & \frac 13 \int_{\Om} \Bigl(\lim_{h\to 0}
w^{\ptMrond_{\!h}}\Bigr)\,\div\vec\Frond
\\ & + \frac 23 \int_{\Om} \Bigl(\lim_{h\to 0} w^{\ptdMrond_{\!\!h}}\Bigr)
\,\div\vec\Frond=\int_\Om w\,\div\vec\Frond.
\end{align*}
It remains to invoke Proposition~\eqref{PropConsistency}(iii) and
the $L^1(\Om)$ bound on $\grad^{\ptTau_h} w^{\ptTau_h}$ to justify the fact that
$$
\lim_{h\to 0} \Bleft
w^{\ptTau_h}\, ,\, \div^{\!\!\ptTau_h} \vec{\mathbb{P}}^{\ptTau_h}\! \vec\Frond
\Bright_{\Om}= \lim_{h\to 0} \Bleft
w^{\ptTau_h}\, ,\, \mathbb{P}^\ptTau\!\Bigl(\div\vec\Frond\Bigr)
\Bright_{\Om}.
$$
For a proof of (ii) use the versions of the compact
Sobolev embeddings with control by the mean value in $\Om$,
and use test functions $\vec\Frond$
compactly supported in $\Om$.
\end{proof}

\subsection{Discrete operators, functions and
fields on $(0,T)\times\Om$}\label{ssec:time-dep}

We discretise our evolution equations in space using the
DDFV operators as described above. In this
time-dependent framework, analogous consistency
properties, Poincar\'e inequality and discrete
$L^p(0,T;W^{1,p}(\Om))$ compactness properties hold.

To be specific, given a DDFV mesh $\Tau$ of $\Om$ and a time step $\delt$,
one considers
the additional projection operator
$$
\mathbb{S}^\ptdelt:\; f\;\mapsto   \Bigl(f^{n} \Bigr)_{n\in [1,\Ndelt]}\subset L^1(\Om),
\qquad f^{n}(x):=\frac 1{\ptdelt}\int_{(n\hspace*{-1pt}-\!1)\ptdelt}^{n\ptdelt} f(t,x)\, dt.
$$
Here $f$ can mean a function in $L^1((0,T)\times\Om)$ or a field in
$\Bigl(L^1((0,T)\times\Om)\Bigr)^3$. The greatest integer
smaller than or equal to $T/\ptdelt$ is denoted by $\Ndelt$.

We define discrete functions $w^{\ptTau,\ptdelt}\in (\R^\ptTau)^\Ndelt$
on $(0,T)\times \Om$ as collections of discrete functions
$w^{\ptTau,n+1}$ on $\Om$ parametrised by $n\in [0,\Ndelt]\cap \N$.
Discrete functions $w^{\ptTau,\ptdelt}\in (\R^\ptTau)^\Ndelt$
on $(0,T)\times \overline{\Om}$ and discrete fields
$\vec\Frond^{\ptTau,\ptdelt}\in(\R^\ptDrond)^\Ndelt$ are defined similarly.
The associated norms are defined in a natural way; e.g., the
discrete $L^2(0,T;H^{1}_0(\Om))$ norm of a
discrete function $w^{\ptTau,n}\in
(\R^\ptTau_0)^\Ndelt$ is computed as
$$
\left( \sum\nolimits_{n=0}^{\Ndelt} \delt \, 
\|\grad^\ptTau w^{\ptTau,n+1}\|_{L^2(\Om)}^2\right)^{\frac12}.
$$
To treat space-time dependent test functions and fields as in
Proposition~\ref{PropConsistency},
one replaces the projection operators $\mathbb{P}^\ptTau$ (and its components
$\mathbb{P}^\ptMrond$,$\mathbb{P}^\ptdMrond$),
$\mathbb{P}^{\ptl\ptTau}$ and $\vec{\mathbb{P}}^\ptTau$
by their compositions with $\mathbb{S}^\ptdelt$. Then the statements and proof
of Proposition~\ref{PropConsistency} can be
extended in a straightforward way.

Also the statements of Proposition~\ref{prop:AsymptCompactness}
extend naturally to the time-dependent context; one only has to replace the statement
\eqref{eq:weak-comp-Sobolev} with the weak
$L^2(0,T;H^1(\Om))$ convergence statement:
\begin{equation*}
\left|
\begin{array}{l}
\text{$w^{\ptTau_{h},\delt_h}$ converges to
$w$ weakly in $L^2((0,T)\times\Om)$ and in $L^2(0,T;L^6(\Om))$;}\\
 \text{$\grad^{\ptTau_{h}}w^{\ptTau_{h},\delt_h}$ converges to $\grad w$
weakly in $L^2(\Om)$},
\end{array}\right.
\end{equation*}
as $\size(\Tau_h)+\delt_h\to 0$. It is natural that strong compactness on
the space-time cylinder $(0,T)\times \Om$ does not follow from a
discrete spatial gradient bound alone; one also
needs some control of time oscillations. It is also well known that
this control can be a very weak one (cf., e.g., the
well-known Aubin-Lions and Simon lemmas).  In the next section, we
give the discrete version of one of these results.

\subsection{Strong compactness in $L^1((0,T)\times \Om)$}\label{ssec:compL^1Q}

Below we state a result that fuses a basic space
translates estimate (for the ``compactness in space'')
with the Kruzhkov $L^1$ time compactness lemma (see \cite{Kruzhkov-zametki}). 
Actually, the Kruzhkov lemma is, by essence, a local compactness result.
For the sake of simplicity, we state the version suitable for discrete functions that
are zero on the boundary; the corresponding $L^1_{loc}([0,T]\times \Om)$
version can be shown with the same arguments
(cf. \cite{ABR-Cross-VF}), and this local version can be 
used for all boundary conditions.

\begin{prop}\label{prop:KruzhkovLemma}
Let $\Bigl(u^{\ptTau_h,\delt_h}\Bigr)_h\in (\R^{\ptTau_h}_0)^{\Ndelt_h}$
be a family of discrete functions on the cylinder $(0,T)\times \Om$ corresponding to
a family $(\delt_h)_h$ of time steps and to
a family $(\Tau_h)_h$ of 3D DDFV meshes of $\Om$ as described
in Section~\ref{sec:DDFV}; we understand that $h\geq \size(\Tau_h)+\delt_h$.
Assume that $\sup_{h\in(0,h_{max}]} {{\rm reg}(\Tau_h)}<+\infty$, where
${\rm reg}(\Tau_h)$ measures the regularity of $\Tau_h$ in the sense
\eqref{eq:mesh-regularity} and \eqref{eq:inclination-bound}.

Assume (for versions $(A)$ and $(B)$) that all primal
interfaces $\KIL$ for all meshes $\Tau_h$ are triangles.
For each $h>0$, assume that the discrete functions $v^{\ptTau_h,\delt_h}$
satisfy the discrete evolution equations
\begin{equation*}
    \text{for $n\in[0,N_h]$},\quad
    \frac{v^{\ptTau_h,{n+1}}-v^{\ptTau_h,n}}{\delt}=\div^{\!\ptTau_h}\,
    \vec \Frond^{\ptTau_h,n+1}+f^{\ptTau_h,n+1}
\end{equation*}
with some initial data $ v^{\ptTau_h,0} \in\R^{\ptTau_h} $,
source terms $ f^{\ptTau_h,\delt_h}  \in (\R^{\ptTau_h})^{\Ndelt_h}$
and discrete fields $\vec \Frond^{\ptTau_h,\delt_h}
\in ((\R^3)^{\ptDrond_h})^{\Ndelt_h} $.

Assume that there is a constant $M$ such that
the following uniform $L^1((0,T)\times\Om)$ estimates hold:
\begin{multline*}
\sum\nolimits_{n=0}^{N_h}
\delt\, \biggl(\Bigl\|\,v^{\ptMrond_{\!h},n+1}\,\Bigr\|_{L^1(\Om)}\!
+\Bigl\|\,v^{\ptdMrond_{\!h},n+1}\,\Bigr\|_{L^1(\Om)}\\
 + \Bigl\|\,f^{\ptMrond_{\!h},n+1}\,\Bigr\|_{L^1(\Om)}\!
 + \Bigl\|\,f^{\ptdMrond_{\!h},n+1}\,\Bigr\|_{L^1(\Om)}\!+
  \,\Bigl\|\,\vec \Frond^{\ptTau_h,n+1}\,
  \Bigr\|_{L^1(\Om)}\biggr)\leq M,
\end{multline*}
and
\\[-10pt]
\begin{equation*}
\sum\nolimits_{n=0}^{N_h}
\delt \, \Bigl\|\,\grad^{\ptTau_h}
u^{\ptTau_h,n+1}\,\Bigr\|_{L^1(\Om)} \leq M.
\end{equation*}
Assume that the families $\Bigl(\,b(u^{\ptMrond_h,0})\,\Bigr)_h$,
$\Bigl(\,b(u^{\ptdMrond_{\!h},0})\,\Bigr)_h$ are bounded in $L^1(\Om)$.

Then for any sequence $(h_i)_i$ converging to zero there
exist $\beta^o,\beta^* \in  L^1((0,T)\times\Om)$ such that, extracting
if necessary a sub-sequence,
$$
\text{$b(u^{\ptMrond_{h_i},\delt_{h_i}})\longrightarrow
\beta^o$, \quad
$b(u^{\ptdMrond_{h_i},\delt_{h_i}})
\longrightarrow \beta^*$, \quad
in $L^1((0,T)\times\Om)$ as $i\to\infty$}.
$$
\end{prop}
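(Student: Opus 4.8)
The plan is to verify the Kolmogorov--Riesz--Fr\'echet criterion for relative compactness in $L^1((0,T)\times\Om)$: a uniform $L^1$ bound together with equicontinuity under space and time translations. I would establish these for the piecewise-constant reconstructions attached to $u^{\ptMrond_{h}}$ and $u^{\ptdMrond_{h}}$ (extended by their boundary values, which vanish since the functions lie in $\R^{\ptTau}_0$), treating the primal and dual components separately as justified in Section~\ref{ssec:PoincareSobolev}, and then pass through the continuous nonlinearity $b$. Throughout I keep in mind that the evolving function $v$ and the function $u$ carrying the gradient bound are linked in the underlying scheme (in the intended application they coincide), so that both the space and the time regularity bear on the \emph{same} reconstructed quantity.

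\emph{Space translates.} First I would turn the uniform bound $\sum_n \delt\,\|\grad^{\ptTau_h} u^{\ptTau_h,n+1}\|_{L^1(\Om)}\le M$ into a discrete $BV$-type estimate. Because each primal face is a triangle, the divided-difference control \eqref{eq:divided-differences} shows that $|\grad_\ptD u^{\ptTau_h}|$ dominates the jumps of $u$ across both the primal and the dual edges of a diamond $\DM$; weighting by the interface measures and using the volume comparisons \eqref{eq:volumes-compared} that stem from the mesh regularity, a telescoping argument along mesh lines yields
$$
\int_0^T\!\!\int_{\R^3} \bigl|u^{\ptMrond_{h}}(t,x+\xi)-u^{\ptMrond_{h}}(t,x)\bigr|\,dx\,dt \;\le\; C\,\bigl(|\xi|+\size(\Tau_h)\bigr)\,M,
$$
uniformly in $h$, and the analogous bound for $u^{\ptdMrond_{h}}$. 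This is the ``compactness in space'' half of the statement.

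\emph{Time translates via the Kruzhkov lemma.} Next I would feed the discrete evolution equation into the Kruzhkov $L^1$ time-compactness mechanism. The idea is to test the equation against the mesh projection $\mathbb{P}^{\ptTau_h}$ of a spatial mollifier $\rho_\eta(\,\cdot-y)$ of scale $\eta$ and to sum over the time slabs lying between $t$ and $t+\tau$; the discrete-divergence term is moved onto $\grad^{\ptTau_h}$ of the mollifier by the discrete duality \eqref{eq:discr-duality}, producing a contribution of order $\eta^{-1}\|\vec\Frond^{\ptTau_h}\|_{L^1}$, while the source contributes $\|f^{\ptTau_h}\|_{L^1}$, and the error incurred in replacing $v$ by its mollification is absorbed by the space-translate estimate just obtained. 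Optimising $\eta\sim\sqrt\tau$ gives a uniform bound $\int_0^{T-\tau}\|v^{\ptTau_h,\delt_h}(t+\tau)-v^{\ptTau_h,\delt_h}(t)\|_{L^1(\Om)}\,dt\le C\,\omega(\tau)$ with $\omega(\tau)\to0$; the $L^1$ bounds on $b(u^{\ptMrond_h,0})$ and $b(u^{\ptdMrond_h,0})$ serve to control the first time slab.

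\emph{Passage through $b$, and the main obstacle.} Combining the two moduli with the uniform $L^1$ bound yields relative compactness of the reconstructions in $L^1((0,T)\times\Om)$, so along a subsequence they converge a.e.; since $b$ is continuous, $b(u^{\ptMrond_{h_i}})\to b(u_\ast)$ a.e., and likewise on the dual mesh. The genuinely delicate point is upgrading this a.e. convergence to convergence in $L^1$: the a priori control is only of $L^1$ type, whereas \eqref{eq:h-conseq} forces the superlinear growth $|b(z)|\le C(1+|z|^{r-2})$, so neither the $L^1$ boundedness nor the strong convergence of $b(u)$ is automatic. I would close this gap by Vitali's theorem, producing the required equi-integrability of $\{b(u^{\ptMrond_{h_i}})\}$ from the interplay of three ingredients: the growth bound from \eqref{eq:h-conseq}, the higher space integrability of the reconstructions furnished by the discrete Sobolev embedding of Proposition~\ref{prop:Poincare-and-Sobolev}, and the propagation in time of the initial $L^1$ bounds on $b(u^{\ptMrond_h,0})$, $b(u^{\ptdMrond_h,0})$ through the evolution equation. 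This reconciliation of the weak $L^1$ estimates with the superlinear nonlinearity $b$ is the crux; by contrast, the space-time compactness of the underlying discrete function is comparatively routine once the divided-difference bounds and the Kruzhkov mollification estimate are in hand.
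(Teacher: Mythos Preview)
The paper does not give a self-contained proof of this proposition: it describes the result as ``fusing a basic space translates estimate with the Kruzhkov $L^1$ time compactness lemma'' and defers to \cite{Kruzhkov-zametki} and \cite{ABR-Cross-VF} for the argument. Your outline --- Kolmogorov--Riesz--Fr\'echet fed by a $BV$-type space-translate bound from the discrete gradient estimate, together with a time-translate bound obtained through the Kruzhkov mollification trick and discrete duality --- is exactly the intended route, and your treatment of the primal and dual components separately via \eqref{eq:divided-differences} is correct. Your acknowledgement that the $u$/$v$ link is left unspecified in the statement and must be read as $u=v$ in the applications is also on target.

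Where your proposal breaks down is the final ``passage through $b$'' step, in two respects. First, a concrete error: you invoke Proposition~\ref{prop:Poincare-and-Sobolev} to gain higher integrability of the reconstructions, but that proposition is the $H^1\hookrightarrow L^6$ embedding and requires an $L^2$ bound on the discrete gradient; the hypothesis here supplies only an $L^1$ gradient bound, which yields at best the $W^{1,1}\hookrightarrow L^{3/2}$ embedding and cannot control $|u|^{r-2}$ for the relevant range of $r$. Your third ingredient (propagating the $L^1$ bound on $b(u^0)$ forward through the evolution equation) has no mechanism in the stated generality either. Second, and more structurally, you have misidentified the crux. Inspecting the two places where the paper actually uses this proposition (Step~5 in Section~\ref{sec:ConvergenceProof} for each scheme), it is invoked only to produce strong $L^1(Q)$ convergence of $v^{\ptTau,\delt}$ itself --- effectively the case $b=\mathrm{id}$. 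The equi-integrability of the nonlinear ionic term is then established \emph{outside} the proposition, from the application-specific $L^2$ gradient bound \eqref{est:L2-0T-H1} and the interpolation $L^\infty(0,T;L^2)\cap L^2(0,T;L^6)\hookrightarrow L^{10/3}(Q)$; this is precisely the origin of the restriction $r<16/3$ in Theorem~\ref{th:convergence}(ii). So the superlinear growth of $b$ is not a burden the proposition is asked to carry, and what you called ``comparatively routine'' is in fact the entire content being claimed.
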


Notice that we only use the full strength of
Proposition~\ref{prop:KruzhkovLemma} to treat the
linearised implicit scheme. For the fully implicit scheme, more
traditional (although not much simpler) $L^2$ versions of
time translation estimates, inspired by the
technique of \cite{AL}, can be used (see \cite{EyGaHe:book}).

\subsection{Discretisation of the ionic current term}\label{ssec:reaction-term}

Consider the general situation where $w$ is discretised on 
a DDFV mesh. Moreover, assume that we also need to discretise
some scalar function $\psi(w)$ (in our context, this is the ionic 
current term $h$; in general reaction-diffusion 
systems, $\psi$ may represent reaction terms).

Then we discretise such reaction term on a 3D DDFV 
mesh of the kind $(B)$ by taking, for $\Psi=\psi(w)$,
\begin{equation}\label{eq:h-ptTau}
\begin{split}
    \Psi^\ptTau & := \biggl(\,\Bigl( \psi(\check w_\ptK) \Bigr)_{\ptK\in\ptMrond}\,,\,
    \Bigl( \psi(\check w_\ptdK)  \Bigr)_{\ptdK\in\ptdMrond}\,\biggr),\\
    \check w_\ptK & :=\frac1{3} w_\ptK
    +\frac 23 \sum_{\ptdK\in\overline{\ptdMrond}}
    \frac{\Vol(\ptK\cap\ptdK)}{\Vol(\ptK)} w_\ptdK, \\
    \check w_\ptdK & := \frac1{3} \sum_{\ptK\in \overline{\ptdMrond}}
    \frac{\Vol(\ptK\cap\ptdK)}{\Vol(\ptdK)} w_\ptK+ \frac 23  w_\ptdK
\end{split}
\end{equation}
In other words,
$$
\begin{array}{l}
\text{$\check w_\ptK$ and $\check w_\ptdK$ are the mean values}\\
\text{of the function
$w^\ptTau(\cdot):=\frac 13 w^{{\ptMrond}}(\cdot)
+\frac 23 w^{{\ptdMrond}}(\cdot)$ on $\K$ and on $\dK$, respectively.}
\end{array}
$$

With this choice, we have for all $w^\ptTau\in\R^\ptTau_0$ and
for all $\ph^\ptTau\in \R^\ptTau$,
\begin{equation*}\label{eq:reaction-summbyparts-0}
\Bleft (\psi(w))^\ptTau , \ph^\ptTau\Bright_\Om =
\int_\Om \psi\left(\frac 13 w^{{\ptMrond}}+\frac 23 w^{{\ptdMrond}}\right)
\;\left(\frac 13 \ph^{{\ptMrond}}+\frac 23 \ph^{{\ptdMrond}}\right)
=\int_\Om
\psi\Bigl(w^\ptTau(\cdot)\Bigr)\;\ph^\ptTau(\cdot).
\end{equation*}
For schemes $(A)$ and $(C)$, we use similar projection
formulas with the expression of
$w^\ptTau(\cdot)$ given by \eqref{eq:DiscreteSol}; this
always leads to the formula
\begin{equation}\label{eq:reaction-summbyparts}
\dsp \Bleft (\psi(w))^\ptTau , \ph^\ptTau\Bright_\Om  =
\int_\Om
\psi\Bigl(w^\ptTau(\cdot)\Bigr)\;\ph^\ptTau(\cdot).
\end{equation}
By the definition \eqref{eq:DiscreteSol} of $w^\ptTau(\cdot)$,
the definition of $\Bleft\,\cdot\,,\,\cdot\,\Bright_\Om$, and
Jensen's inequality, we get
\begin{equation}\label{eq:Bsquare>intsquare}
\forall w^\ptTau\in\R^\ptTau \quad \int_\Om \Bigl| w^\ptTau(\cdot)\Bigr|^2
\leq \Bleft w^\ptTau , w^\ptTau\Bright_\Om.
\end{equation}

Finally, notice that such choice of discretisation of the ionic current term
does not enlarge the stencil of the DDFV scheme
used for the discretisation of the diffusion.

\section{The convergence proofs}\label{sec:ConvergenceProof}

\subsection{Convergence of the fully implicit scheme}

The proof follows closely the existence proof for
Problem \eqref{S1-p},\eqref{S2},\eqref{S3} mentioned
in Section~\ref{sec:Theory-WellPosedness}.

\underline{Step 1} (proof of Proposition \ref{prop:schemeexist} -- uniqueness 
of a discrete solution).
Although Remark~\ref{rem:optimization} can be used to infer the
existence and uniqueness of a discrete solution, let us
give a proof that contains the essential calculations also
utilised in the subsequent steps. For the uniqueness and the
continuous dependence claim \eqref{eq:discr-contdep} we reason as in
Theorem~\ref{theo:L^2-contraction}, omitting
the regularisation step.
Namely,
using \eqref{all-AbstractScheme} for two solutions
$\biggl((u_i^{\ptTau,\delt},u_e^{\ptTau,\delt},v^{\ptTau,\delt})\biggr)$ and
$\biggl((\hat u_i^{\ptTau,\delt},\hat u_e^{\ptTau,\delt}, 
\hat v^{\ptTau,\delt})\biggr)$, by subtraction we get
\addtocounter{equation}{1}
$$
\leqno ({\theequation}_{i.e}) \qquad
\begin{array}{l}
\dsp \frac 1{\delt} \biggl( (v^{\ptTau,n+1}\!-\!\hat v^{\ptTau,n+1}) 
- (v^{\ptTau,n}\!-\!\hat v^{\ptTau,n})  \biggr)
\,-\, \Bigl(h^{\ptTau,n+1}\!\!-\!\hat h^{\ptTau,n+1}\Bigr)\\[7pt]
\dsp \qquad\quad -(-1)^{i,e}\,
\div^\ptTau_{\!\!s_{i,e}^{\ptTau,n+1}}[\,\bM_{i,e}^\ptTau \;
\Bigl(\grad^\ptTau_{\!g_{i,e}^{\!\ptTau,n+1}}
u_{i,e}^{\ptTau,n+1} \!\!-\! \grad^\ptTau_{\!g_{i,e}^{\!\ptTau,n+1}}
\hat u_{i,e}^{\ptTau,n+1} \Bigr)\,] \,=\,0
\end{array}
$$
with $(-1)^i\!:=\!1$, $(-1)^e\!:=\!-1$ and
$
h^{\ptTau,n+1}=\mathbb{P}^\ptTau h(v^{\ptTau,n+1}(\cdot))$, $\hat h^{\ptTau,n+1}
=\mathbb{P}^\ptTau h(\hat v^{\ptTau,n+1}(\cdot)).
$
For all $n$, we take the scalar product $\Bleft\,\cdot\,,\,\cdot\,\Bright_\Om$ of 
equations $({\theequation}_{i.e})$ with the discrete functions
$\ph^\ptTau:=\Bigl( u_e^{\ptTau,n+1} \!\!-\! \hat u_e^{\ptTau,n+1} \Bigr) $, respectively
(more precisely, we use the discrete weak 
formulations \eqref{eq:dualformulation} with test function $\ph^\ptTau$).
Then we subtract the relation obtained for $e$ from the relation obtained for $i$.
Finally, we use the discrete duality property \eqref{eq:discr-duality} on 
the divergence terms. Notice that the boundary terms
vanish, because both solutions correspond to the same 
Dirichlet and Neumann data $g_{i,e}^{\ptTau,\delt},s_{i,e}^{\ptTau,\delt}$;
in particular, we can use \eqref{eq:discr-duality} because
$$\Bigl(\grad^\ptTau_{\!g_{i,e}^{\!\ptTau,n+1}}
u_{i,e}^{\ptTau,n+1} \!\!-\! \grad^\ptTau_{\!g_{i,e}^{\!\ptTau,n+1}}
\hat u_{i,e}^{\ptTau,n+1} \Bigr)= \grad^\ptTau_{\!0} \Bigl(
u_{i,e}^{\ptTau,n+1} \!\!-\!\hat u_{i,e}^{\ptTau,n+1}\Bigr),$$ further, the terms 
coming from $\Gamma_N$ are $\Cleft s_{i,e}^{\ptTau,n+1}\!\!-\! 
s_{i,e}^{\ptTau,n+1} , u_{i,e}^{\ptTau,n+1} \!\!-\!
\hat u_{i,e}^{\ptTau,n+1} \Cright_{\Gamma_N}=0$.
The outcome of the calculation is the following equality:
\begin{multline}\label{eq:big-equality}
\frac{1}{\delt}
\Bleft \,\Bigl(v^{\ptTau,n+1}\!\!-\!\hat v^{\ptTau,n+1}\Bigr)
- \Bigl(v^{\ptTau,n}\!\!-\!\hat v^{\ptTau,n}\Bigr)\,,\,\Bigl(v^{\ptTau,n+1}\!
-\!\hat v^{\ptTau,n+1}\Bigr) \,\Bright_\Om\\
+ \Aleft \,\bM_i^{\ptTau}\Bigl(\grad^\ptTau_{\!g_i^{\!\ptTau,n+1}} u_i^{\ptTau,n+1} \!\!
-\!\grad^\ptTau_{\!g_i^{\!\ptTau,n+1}}  \hat u_i^{\ptTau,n+1}  \Bigr)\,,\,
\Bigl(\grad^\ptTau_{\!g_i^{\!\ptTau,n+1}} u_i^{\ptTau,n+1}\!\!
-\! \grad^\ptTau_{\!g_i^{\!\ptTau,n+1}} \hat u_i^{\ptTau,n+1}\Bigr)\, \Aright_\Om\\ +
\Aleft \,\bM_e^{\ptTau}\Bigl(\grad^\ptTau_{\!g_e^{\!\ptTau,n+1}} u_e^{\ptTau,n+1}\!\!
-\! \grad^\ptTau_{\!g_e^{\!\ptTau,n+1}} \hat u_e^{\ptTau,n+1}  \Bigr)\,,\,
\Bigl(\grad^\ptTau_{\!g_e^{\!\ptTau,n+1}} u_e^{\ptTau,n+1}\!\!
-\!\grad^\ptTau_{\!g_e^{\!\ptTau,n+1}}  \hat u_e^{\ptTau,n+1}\Bigr)\, \Aright_\Om \\
+ \Bleft\, \Bigl(h^{\ptTau,n+1}\!\!-\!\hat h^{\ptTau,n+1}\Bigr)\,,\,
\Bigl(v^{\ptTau,n+1}\!\!-\!\hat v^{\ptTau,n+1}\Bigr) \Bright_\Om = 0.
\end{multline}
Then we sum over $n\in[0,k]$, $k\leq N$. Using the convexity
inequality $a(a-b)\ge \frac{1}{2}(a^2-b^2)$,
the positivity of $\bM_{i,e}^\ptTau$ and the definition of
$\tilde b$, using \eqref{eq:reaction-summbyparts} we get
\begin{multline}\label{eq:tmp-1}
\frac 12 \Bleft \,\Bigl(v^{\ptTau,k+1}\!\!-\!\hat v^{\ptTau,k+1}\Bigr)
\,,\,\Bigl(v^{\ptTau,k+1}\!-\!\hat v^{\ptTau,k+1}\Bigr) \,\Bright_\Om\\
+ \sum_{n=0}^k \delt \int_\Om \biggl(\tilde h(v^{\ptTau,n+1}(\cdot))
-\tilde{h}(\hat v^{\ptTau,n+1}(\cdot))\biggr)
\biggl(v^{\ptTau,n+1}(\cdot)-\hat v^{\ptTau,n+1}(\cdot)\biggr)
\\
\leq  \frac 12 \Bleft \,\Bigl(v^{\ptTau,0}\!\!-\!\hat v^{\ptTau,0}\Bigr)
\,,\,\Bigl(v^{\ptTau,0}\!-\!\hat v^{\ptTau,0}\Bigr) \,\Bright_\Om
+  L\sum_{n=0}^k \delt
\int_\Om \Bigl|v^{\ptTau,n+1}(\cdot)-\hat v^{\ptTau,n+1}(\cdot)\Bigr|^2.
\end{multline}
Then the second term is non negative, and we get \eqref{eq:discr-contdep}
from \eqref{eq:Bsquare>intsquare} and the discrete Gronwall inequality.

In particular, it follows that for fixed initial and boundary data
there is uniqueness of $v^{\ptTau,k+1}$ for all $k$.
Then, returning to \eqref{eq:big-equality}, we find out that there is uniqueness for
$\grad^\ptTau_{\!g_{i,e}^{\!\ptTau,n+1}} u_{i,e}^{\ptTau,n+1}$ for all $n$.
We conclude the uniqueness of
$u_{i,e}^{\ptTau,n+1}$ using the discrete Poincare
inequality (and, in the case $\Gamma_D=\text{\O}$, using
condition \eqref{eq:discrete-normalization}).

\underline{ Step 2} (proof of Proposition \ref{prop:schemeexist} -- existence of a discrete solution).
Regarding the question of existence, we reason by induction in $n$.
Using the discrete weak formulation \eqref{eq:dualformulation} 
with the test function $\ph^\ptTau=u_{i,e}^\ptTau$, subtracting the equations
obtained for subscripts $i$ and $e$, we find
\begin{align*}
& \frac{1}{\delt}
\Bleft \, v^{\ptTau,n+1}
 \,,\, v^{\ptTau,n+1}  \,\Bright_\Om
 + \int_\Om \tilde h(v^{\ptTau,n+1}(\cdot))\,v^{\ptTau,n+1}(\cdot)
 \\ & \qquad
 + \Aleft \,\bM_i^{\ptTau}\grad^\ptTau_{\!g_i^{\!\ptTau,n+1}} u_i^{\ptTau,n+1}\,,\,
\grad^\ptTau_{\!g_i^{\!\ptTau,n+1}} u_i^{\ptTau,n+1}\, \Aright_\Om
\\ & \qquad\qquad
+ \Aleft \,\bM_e^{\ptTau}\grad^\ptTau_{\!g_e^{\!\ptTau,n+1}} u_e^{\ptTau,n+1}\,,\,
\grad^\ptTau_{\!g_e^{\!\ptTau,n+1}} u_e^{\ptTau,n+1}\, \Aright_\Om
\\ & = \frac{1}{\delt}
\Bleft \,   v^{\ptTau,n} \,,\, v^{\ptTau,n+1}\,\Bright_\Om+
\int_\Om  \biggl(L \Bigl|v^{\ptTau,n+1}(\cdot)|^2+l\,v^{\ptTau,n+1}(\cdot)
\biggr)
\\ & \qquad\quad
+ \Bleft\, \Iap^{\ptTau,n+1}\,,\, v^{\ptTau,n+1} \Bright_\Om
 + \Cleft s_i^{\ptTau,n+1}\,,\,u_i^{\ptTau,n+1}  \Cright_{\Gamma_N}
 + \Cleft s_e^{\ptTau,n+1}\,,\,u_e^{\ptTau,n+1}  \Cright_{\Gamma_N} .
\end{align*}
Then using the condition $\frac 1\delt>L$,
property \eqref{eq:Bsquare>intsquare}, the Cauchy-Schwarz
inequality and the equivalence of all norms on $\R^\ptTau$,
we deduce the {\it a priori} estimate
\begin{multline*}
\frac \gamma 2 \Biggl(\Aleft\grad^\ptTau_{\!g_i^{\!\ptTau,n+1}}
u_i^{\ptTau,n+1}\,,\, \grad^\ptTau_{\!g_i^{\!\ptTau,n+1}} u_i^{\ptTau,n+1}\Aright
+ \Aleft\grad^\ptTau_{\!g_e^{\!\ptTau,n+1}} u_e^{\ptTau,n+1}\,,\,
\grad^\ptTau_{\!g_e^{\!\ptTau,n+1}} u_e^{\ptTau,n+1}\Aright \Biggr)\\
+  \left(\frac 1{2\delt} -L\right)\Bleft \, v^{\ptTau,n+1}
 \,,\, v^{\ptTau,n+1}  \,\Bright_\Om \;\leq\;
 C(v^{\ptTau,n},\Iap^{\ptTau,n+1},s_{i,e}^{\ptTau,n+1},
 g_{i,e}^{\ptTau,n+1},l,\gamma,\Omega,\Gamma_D).
\end{multline*}
The left-hand side allows us to bound the discrete
solutions {\it a priori}, if $\Gamma_D\neq\text{\O}$.
The case of pure Neumann BC is slightly more delicate.

We take advantage of the above estimate to apply the
Leray-Schauder topological degree theorem. Let us look at the
most delicate case $\Gamma_D=\text{\O}$.

For $\theta\in[0,1]$, we consider the initial data $\theta v_0$, the Dirichlet
and Neumann boundary data $\theta g_{i,e}$ and $\theta s_{i,e}$,
and the source term $\theta \Iap$. We consider a
family $\mathcal F^\theta$ of maps on the space
$$
Sp:=\biggl\{ \biggl((u_i^{\ptTau,n},u_e^{\ptTau,n},v^{\ptTau,n})
\biggr)_{n=1,\ldots,N}\subset
(R^\ptTau)^3 \biggl| \text{\; \eqref{eq:discrete-normalization} holds for all $n$}\;
\biggr\}
$$
defined as follows. First, given an element in $Sp$ denoted $U^{\ptTau,n+1}$,
introduce the following notation for the expressions in 
the left-hand side of equations \eqref{all-AbstractScheme}
with data scaled by $\theta$:
\begin{equation*}\label{eq:notation-Ftheta}
\begin{array}{l}
\alpha_i^\theta( U^{\ptTau,\delt}):=\frac{v^{\ptTau,n+1} - v^{\ptTau,n}}{\Delt}
-\div^\ptTau_{\!\!\theta s_i^{\ptTau,n+1}}[\bM_{i}^\ptTau\grad^\ptTau_{\!\theta g_i^{\!\ptTau,n+1}}
u_i^{\ptTau,n+1}]+
h^{\ptTau,n+1}
- \theta \Iap^{\ptTau,n+1}, \\
\alpha_e^\theta( U^{\ptTau,\delt}):=\frac{v^{\ptTau,n+1} - v^{\ptTau,n}}{\Delt}
+\div^\ptTau_{\!\!\theta s_e^{\ptTau,n+1}}[\bM_{e}^\ptTau \grad^\ptTau_{\!\theta g_e^{\!\ptTau,n+1}}
u_e^{\ptTau,n+1}]+
h^{\ptTau,n+1}
-\theta\Iap^{\ptTau,n+1}, \\
\gamma^\theta( U^{\ptTau,\delt}):=v^{\ptTau,n}-(u_i^{\ptTau,n}-u_e^{\ptTau,n}); \\
\end{array}
\end{equation*}
recall that for the volumes $\K\in \MrondGN$, the 
convention \eqref{eq:division-by-zero-convention}
applies, so that the entry of $\alpha_{i,e}^\theta( U^{\ptTau,n+1})$
corresponding to the volumes $\K\in \MrondGN$ should be taken equal to
$(\bM_{i,e})_\ptD^{n+1}\cdot\nuK\!+(s_{i,e})^{n+1}_\ptK$, where 
the diamond $\DM$ is the one with $\K\subset \ptl\DM$. 
Then we define $\mathcal F^\theta$ as the element of
$\R^\ptTau\times\R^\ptTau\times\R^\ptTau$ given by $\Bigl(\alpha_i^\theta, \alpha_i^\theta-\alpha_e^\theta, \gamma^\theta  \Bigr)$.
This definition implies that $\mathcal F^\theta$ maps $Sp$ into itself,
thanks to the definition of the discrete divergence (which 
ensures the consistency of the fluxes)
and to the constraint \eqref{eq:Neumann-compatibility} that is 
preserved at the discrete level.

With this definition, it is evident that the zeros of $\mathcal F^\theta$ are solutions
of the scheme \eqref{all-AbstractScheme} with data scaled by $\theta$.
The  estimate that we have just deduced is uniform in $\theta$; it provides a 
uniform in $\theta$ bound on
some norm  of possible zeros of $\mathcal F^\theta$ in $Sp$.
Therefore from the Leray-Schauder theorem and the
existence of a trivial zero of $\mathcal F^0$ we infer existence
of a zero for $\mathcal F^\theta$, in particular for $\theta=1$.

\underline{Step 3} (Estimates of the discrete solution).
We make the same calculation as in Step 1, but
with $\hat u_{i,e}^{\ptTau,\delt}$ set to zero; and we sum over $n=1,\dots,k$.
Using the convexity inequality $a(a-b)\ge \frac{1}{2}(a^2-b^2)$,
the positivity of $\bM_{i,e}^\ptTau$ and the definition of $\tilde b$,
using \eqref{eq:reaction-summbyparts} we get the identity
\begin{align*}
& \frac 12 \Bleft \, v^{\ptTau,k+1}
 \,,\, v^{\ptTau,k+1}  \,\Bright_\Om
 + \int_0^{(k+1)\delt}\!\!\int_\Om
 \tilde h(v^{\ptTau,\delt}(\cdot))\,v^{\ptTau,\delt}(\cdot)
\\ & \qquad
+ \gamma \int_0^{(k+1)\delt}\!\!\int_\Om
\biggl( \Bigr|\Bigl(\grad^\ptTau_{\!g_i^{\!\ptTau,\delt}}
u_i^{\ptTau,\delt}\Bigr)(\cdot)\Bigr|^2
+\Bigr|\Bigl(\grad^\ptTau_{\!g_e^{\!\ptTau,\delt}}
u_e^{\ptTau,\delt}\Bigr)(\cdot)\Bigr|^2\biggr)
\\ &  \leq \frac 12 \Bleft \, v^{\ptTau,0}
 \,,\, v^{\ptTau,0}  \,\Bright_\Om
 +\int_0^{(k+1)\delt}\!\!\int_\Om  \biggl(L \Bigl|v^{\ptTau,\delt}(\cdot)|^2
 +l\,v^{\ptTau,\delt}(\cdot)\biggr)
 \\  & \qquad +
 \int_0^{(k+1)\delt}\!\!\int_\Om  \Iap^{\ptTau,\delt}(\cdot)\, v^{\ptTau,\delt}(\cdot)
\\ & \qquad
+ \int_0^{(k+1)\delt}\!\!\int_{\Gamma_N}
 \biggl(s_i^{\ptl\ptTau,\delt}(\cdot)\,,\,u_i^{\ptl\ptTau,\delt}(\cdot)
 +   s_e^{\ptl\ptTau,\delt}(\cdot)\,,\,u_e^{\ptl\ptTau,\delt}(\cdot)\biggr).
\end{align*}
Using the Cauchy-Schwarz inequality,  property \eqref{eq:Bsquare>intsquare},
the trace inequality \eqref{eq:trace-ineq} for each
components of  the solution, and the discrete Gronwall
inequality, we deduce the following uniform bounds:
\begin{align}
\label{est:Linty-L2}
 \|v^{\ptTau,\delt}(\cdot)\|_{L^\infty(0,T;L^2(\Om))} \leq C;\\
\label{est:Lr}
 \|v^{\ptTau,\delt}(\cdot)\|_{L^r(Q)} \leq C;\\
\label{est:L2-0T-H1}
 \|u_{i,e}^{\ptTau,\delt}(\cdot)\|_{L^2(Q)} +
 \Bigl\|\Bigl(\grad^\ptTau_{g^{\ptTau,\delt}}
 u_{i,e}^{\ptTau,\delt}\Bigr)(\cdot)\Bigr\|_{L^2(Q)} \leq C,
\end{align}
where $C$ depends on $\regmesh,\gamma, \alpha,L,l,
\|v_0\|_{L^2(\Om)},\|\Iap\|_{L^2(Q)},
\|g_{i,e}\|_{L^2(0,T;H^1(\Om))}$, and
$\|s_{i,e}\|_{L^2((0,T)\times\Gamma_N)}$.
As usual, in order to obtain \eqref{est:L2-0T-H1}, the case
$\Gamma_D=\text{\O}$ is treated separately, using the normalisation
property \eqref{eq:discrete-normalization} for $u_e$,
the $L^2(Q)$ bound on $v$ and the fact that $u_e=u_i-v$.

\underline{Step 4} (Continuous weak formulation for the discrete solutions).
We take a test function $\ph\in \mathcal D\Bigl((0,T]\times\Bigl(\Om\!\cup\!\Gamma_N\Bigr)\Bigr)$
and discretise it as follows:
$$
\ph^{\ptTau,\delt}:=  \mathbb{P}^\ptTau_c \!\circ\! \mathbb{S}^\ptdelt \ph;
\;\; \text{on the Dirichlet boundary, we take}\;\;
\ph^{\ptl\ptTau}:=0.
$$
Then we use the discrete weak 
formulation \eqref{eq:dualformulation} with 
test function $\delt\ph^{\ptTau,n+1}$ at time level $n$,  and sum 
over $n$. What we get is
\begin{multline}\label{eq:discr-cont-1}
\sum_{n=0}^N  \Bleft v^{\ptTau,n+1}\!\!-\! v^{\ptTau,n}\,,\,
\ph^{\ptTau,n+1} \Bright_\Om
+ \sum_{n=0}^N \delt \Bleft h^{\ptTau,n+1}\,,\,\ph^{\ptTau,n+1}\,\Bright_\Om\\
+ \sum_{n=0}^N \delt \Aleft   \bM_i^{\ptTau}
\grad^\ptTau_{\!g_i^{\!\ptTau,n+1}} u_i^{\ptTau,n+1}\,,\,
\grad_0 \ph^{\ptTau,n+1} \Aright_\Om\\
= \sum_{n=0}^N \delt \Bleft \Iap^{\ptTau,n+1}\,,\,\ph^{\ptTau,n+1} \Bright_\Om
+   \sum_{n=0}^N \delt \Cleft s_i^{\ptTau,n+1}\,,\,
\ph^{\ptl\ptTau,n+1}  \Cright_{\Gamma_N}.
\end{multline}
The equation for the components $u_e^{\ptTau,\delt}$ is analogous.

We use summation by parts on the first
term in \eqref{eq:discr-cont-1}, the Lipschitz continuity of $\ptl_t \ph$,
the definition of $v^{\ptTau,0}$ and
Proposition~\ref{PropConsistency}(i), to see that this term equals
$$
-\iint_{Q} v^{\ptTau,\delt}(\cdot)\, \ptl_t\ph- \int_\Om v_0\,\ph(0,\cdot)
+r^1_\ph(\size(\Tau), \delt)(1+\|v_0\|_{L^1(\Om)}),
$$
where $r_\ph$ denotes a generic remainder term such that
$r_\ph(\size(\Tau),\delt)\to 0$ as $\size(\Tau),\delt\to 0$.
Thanks to \eqref{eq:reaction-summbyparts}, the second
term in \eqref{eq:discr-cont-1} is merely
\begin{multline*}
  \sum_{n=0}^N \delt \int_\Om h(v^{\ptTau,n+1}(\cdot))\;\ph^{\ptTau,n+1}(\cdot)=
\iint_Q h(v^{\ptTau,\delt}(\cdot))\;\ph^{\ptTau,\delt}(\cdot)\\
=\iint_Q h(v^{\ptTau,\delt}(\cdot))\;\ph+
r_\ph(\size(\Tau),\delt)\,C(\regmesh)\Bigl\|h(v^{\ptTau,\delt})\Bigr\|_{L^1(Q)}.
\end{multline*}
Because the discrete gradients are constant per
diamond, thanks to the definition of $\bM_i^{\ptTau,n+1}$
and to the discrete gradient consistency result of 
Proposition~\ref{PropConsistency}(ii), the third term in \eqref{eq:discr-cont-1} is equal to
\begin{multline*}
\sum_{n=0}^N \delt \int_\Om \bM_i (\cdot)
\grad^\ptTau_{\!g_i^{\!\ptTau,n+1}} u_i^{\ptTau,n+1}(\cdot)\cdot
\grad_0 \ph^{\ptTau,n+1}(\cdot)=\iint_Q \bM_i(\cdot)
\grad^\ptTau_{\!g_i^{\!\ptTau,\delt}} u_i^{\ptTau,\delt}(\cdot)\cdot
\grad  \ph\\ + r_\ph(\size(\Tau),\delt)\,C(\regmesh)\|\bM_i\|_{L^\infty(\Om)}
\Bigl\|\grad^\ptTau_{g^{\ptTau,\delt}} u_i^{\ptTau,\delt}\Bigr\|_{L^1(Q)}.
\end{multline*}
Similarly, thanks to further consistency results, the two last 
terms in \eqref{eq:discr-cont-1} can be rewritten as
$$
\iint_Q \Iap\, \ph+\int_0^T\!\!\int_{\Gamma_N} s_i\, \ph
+ r_\ph(\size(\Tau), \delt) \,C(\regmesh)\biggl(
 \|\Iap\|_{L^1(Q)}+\|s_i\|_{L^1((0,T)\times\Gamma_N}\biggr).
$$
Gathering the above calculations, we end up with the weak form
of the discrete equation:
\begin{multline}\label{eq:weak-scheme}
\iint_{Q} \biggl(-v^{\ptTau,\delt}(\cdot) \ptl_t\ph
+ \bM_i (\cdot) \grad^\ptTau_{\!g_i^{\!\ptTau,\delt}} u_i^{\ptTau,\delt}(\cdot)\cdot
\grad  \ph +h(v^{\ptTau,\delt}(\cdot))\;\ph\biggr)\\
= \int_\Om v_0\,\ph(0,\cdot)+\iint_Q \Iap\, \ph
+\int_0^T\!\!\int_{\Gamma_N} s_i\, \ph
\\ + \,C\,r_\ph(\size(\Tau), \delt)
\biggl(1+\Bigl\|\grad^\ptTau_{g^{\ptTau,\delt}} u_i^{\ptTau,\delt}\Bigr\|_{L^1(Q)}\biggr);
\end{multline}
the constant $C$ depends on the data of the problem, $\Om$ and $\regmesh$.
The second equation of the system is analogous, with $u_i^{\ptTau,\delt}$ replaced by
$u_e^{\ptTau,\delt}$ and with signs changed accordingly.

\underline{Step 5} (Convergences via compactness).
All the convergences below are along a sub-sequence
of a sequence $\delt_m,\Tau_m$ of time steps and meshes
with $\size(\Tau_m)+\delt_m$ tending to zero as $m\to \infty$.
In what follows, we drop the subscripts ``$m$'' in the notation;
indeed, after the identification of the limits, the uniqueness of a solution to
Problem~\eqref{S1-p},\eqref{S2},\eqref{S3} will
permit to suppress the extraction argument.

From \eqref{est:L2-0T-H1} and the compactness results in
Sections~\ref{ssec:W1p},~\ref{ssec:time-dep},
we readily find that
\begin{equation}\label{eq:convL2}
u_{i,e}^{\ptTau,\delt}(\cdot) \to u_{i,e}, \;\;
\Bigl(\grad^\ptTau_{g^{\ptTau,\delt}} u_{i,e}^{\ptTau,\delt}\Bigr)(\cdot)
\to \grad u_{i,e} \;\;\text{weakly in $L^2(Q)$},
\end{equation}
as $\size(\Tau),\delt\to 0$; and $u_{i,e}-g_{i,e}\in L^2(0,T;V)$.
Because $v^{\ptTau,\delt}=u_{i}^{\ptTau,\delt}-u_{e}^{\ptTau,\delt}(\cdot)$,
analogous convergences hold for
$v^{\ptTau,\delt}$ and its discrete gradient, the corresponding
limits being $v :=u_i-u_e$ and $\grad v$, respectively.

Moreover, if $\Gamma_D=\ptl\Om$, $g_{i,e}\equiv 0$, we can use
the compactness result of Section~\ref{ssec:compL^1Q} and infer
the strong convergence of $v^{\ptTau,\delt}(\cdot)$ in $L^1(Q)$;
by the preceding remark, the limit is identified with $v$. In the case
of other boundary conditions, we use the local version of
Proposition~\ref{prop:KruzhkovLemma} (shown in \cite{ABR-Cross-VF} for
traditional finite volume schemes; the adaptation to DDFV schemes is straightforward).
Up to now we only have the $L^1(0,T;L^1_{loc}(\Om))$
convergence of $v^{\ptTau,\delt}(\cdot)$ to $v$.
In both cases, the uniform up-to-the-boundary estimate \eqref{est:Lr}
and the interpolation argument yield the strong
convergence of $v^{\ptTau,\delt}(\cdot)$ to
$v$ in $L^{r-\eps}(Q)$, for all $\eps>0$, and the
weak $L^r(Q)$ convergence. In particular,
thanks to the growth assumption \eqref{eq:h-conseq} on $h$ we have
\begin{equation}\label{eq:conv-h}
h(v^{\ptTau,\delt}(\cdot)) \to h(v), \;\;\text{strongly in $L^1(Q)$ as
$\size(\Tau),\delt\to 0$}.
\end{equation}

\underline{Step 6} (Passage to the limit in the continuous weak formulation).
In view of the properties \eqref{eq:convL2},\eqref{eq:conv-h} of Step 5,
the passage to the limit in \eqref{eq:weak-scheme} and the corresponding equation
for $u_e^{\ptTau,\delt}$ is straightforward. We conclude that the limit triple
$\Bigl(u_i,u_e,v\Bigr)$ of $\biggl(u_i^{\ptTau,\delt},u_e^{\ptTau,\delt},v^{\ptTau,\delt}   \biggr)$
is a weak solution of Problem~\eqref{S1-p},\eqref{S2},\eqref{S3}.
In view of the uniqueness of a weak solution, we can
bypass the ``extraction of a sub-sequence" part in Step 5.
This ends the convergence proof.

\underline{Step 7} (Strong convergences).
We will prove that the functions $u_{i,e}^{\ptTau,\delt}$ and their
discrete gradients  converge strongly to $u_{i,e}$,$\grad u_{i,e}$, respectively,
in $L^2(Q)$, while $v^{\ptTau,\delt}$ converges strongly to $v$ in $L^r(Q)$.
To this end, we will utilise monotonicity arguments
to improve the weak convergences  to the strong ones.

 By the established weak convergences
and the strong $L^2$ convergence of $v^{\ptMrond,0},v^{\ptdMrond,0}$
and (for version $(C)$) of $v^{\ptdiezMrond,0}$ to $v_0$, we get
\begin{align}
&\lim_{\size(\ptTau),\delt \to 0} \Biggl(
\frac 12 \Bleft v^{\ptTau,0},\, v^{\ptTau,0}\Bright_\Om 
+ \sum_{n=0}^N \delt \Bleft \Iap^{\ptTau,n+1}\,,\,v^{\ptTau,n+1} \Bright_\Om
\nonumber \\ & \qquad \qquad\qquad\qquad
+   \sum_{n=0}^N \delt \Cleft s_i^{\ptTau,n+1}\,,\,
u_i^{\ptl\ptTau,n+1}  \Cright_{\Gamma_N}  +\sum_{n=0}^N \delt \Cleft s_i^{\ptTau,n+1}\,,\,
u_e^{\ptl\ptTau,n+1}  \Cright_{\Gamma_N}\Biggr) \nonumber
\\[-5pt] & \qquad\qquad
=\frac{1}{2}\int_\Om |v^0|^2+\iint_Q \Iap\,v
+ \int_0^T\!\!\int_{\Gamma_N} \Bigl(s_i u_i+s_e u_e  \Bigr). \label{eq:equal-limits}
\end{align}
First, as in Step 2, take the discrete solutions $u^{\ptTau,\delt}_{i,e}$
as test functions in the discrete
equations and subtract the resulting identities.
Next, with the help of the regularisation Lemma~\ref{lem:regularisation},
take $u_{i,e}$ as test functions in the two equations of the
system, and subtract the resulting identities.
Comparing the two relations with the help of \eqref{eq:equal-limits},
using in addition inequality \eqref{eq:Bsquare>intsquare}, we infer
\begin{multline*}
\lim_{\size(\ptTau),\delt \to 0}\Biggl(\;
 \frac 12 \int_\Om \Bigl| v^{\ptTau,\delt}(T)\Bigr|^2
 + \iint_Q \tilde h(v^{\ptTau,\delt}(\cdot))\,v^{\ptTau,\delt}(\cdot)\\
 + \iint_Q \biggl( \bM_i(\cdot)\Bigl(\grad^\ptTau_{\!g_i^{\!\ptTau,\delt}}
 u_i^{\ptTau,\delt}\Bigr)(\cdot)
 \cdot \Bigl(\grad^\ptTau_{\!g_i^{\!\ptTau,\delt}} u_i^{\ptTau,\delt}\Bigr)(\cdot)
\\  \qquad \qquad\qquad\qquad
+ \bM_e (\cdot) \Bigl(\grad^\ptTau_{\!g_e^{\!\ptTau,\delt}}
u_e^{\ptTau,\delt}\Bigr)(\cdot)\cdot
\Bigl(\grad^\ptTau_{\!g_e^{\!\ptTau,\delt}} u_e^{\ptTau,\delt}\Bigr)(\cdot) \biggr)\;\Biggr)\\
\leq \;\frac 12 \int_\Om |v(T)|^2+\iint_Q \tilde h(v)\,v +
\iint_Q \biggl( \bM_i\grad u_i\cdot \grad u_i
+\bM_e\grad u_e\cdot\grad u_e\biggr).
\end{multline*}

Furthermore, let us assume for simplicity that $L=0$, $l=0$ (which
means that $h(0)=0$ and $h(r)r\geq 0$, so that the Fatou 
lemma can be used); to treat the general
case, use the test function $\zeta(t):=\exp(2L(T-t))\char_{[0,T)}(t)$ in
order to absorb the terms containing $L|v|^2\zeta$ into
the term $ \frac{v^2}{2}\ptl_t\zeta$.

By the Fatou lemma and properties of weak
convergence (with respect to weighted vector-valued $L^2(Q)$ spaces
with weights the matrices $\bM_{i,e}>0$),
we conclude that the above inequality is actually an equality.
Using the fact that weak convergence plus convergence of norms
yields strong convergence in uniformly convex Banach
spaces, using an easy refinement 
of the Fatou lemma\footnote{This result is sometimes 
referred to as the Schaeffe lemma, and it can be stated as follows: 
$\dsp \hspace{5pt}\biggl[\;f_n\geq 0,\; f_n\to f \text{\;a.e. on 
$\Om$},\; \int_{\Om} f_n \to \int_\Om f  \text{\;as $n\to\infty$} \;\biggr] 
\;\; \Longrightarrow \;\; \biggl[\; f_n\to f 
\text{\;in $L^1(\Om)$} \text{\;as $n\to\infty$} \;\biggr]$.} 
(convergence of the integrals implies the strong convergence), we conclude that
\begin{align*}
 & \Bigl(\grad^\ptTau_{\!g_{i,e}^{\!\ptTau,\delt}} u_{i,e}^{\ptTau,\delt}\Bigr)
 (\cdot)\; \to\; \grad u_{i,e} \;\;\;\text{
strongly in $L^2(Q)$,}\\
& \tilde h(v^{\ptTau,\delt}(\cdot))\,v^{\ptTau,\delt}(\cdot) \;\to\;
\tilde h(v)v \;\;\;\text{
strongly in $L^1(Q)$.}
\end{align*}
Using the lower bound in \eqref{eq:h-conseq} and the Vitali theorem, we infer that
$\|v^{\ptTau,\delt}(\cdot)\|_{L^r(Q)}$ converges to $\|v\|_{L^r(Q)}$, thus the weak $L^r(Q)$ 
convergence of $v^{\ptTau,\delt}(\cdot)$ to $v$ is upgraded to the claimed 
strong convergence.

Finally, the strong $L^2(Q)$ convergence of the discrete
gradients of $u_{i,e}^{\ptTau,\delt}$ ensures
a uniform estimate on their translates in time:
$$
\int_0^{T\!-\!\tau}\!\!\!\!\int_\Om | \grad^\ptTau u_{i,e}^{\ptTau,\delt}(t\!+\!\tau,x) 
-  \grad^\ptTau u_{i,e}^{\ptTau,\delt}(t,x) |^2\,dxdt \to 0 
\text{\; as $\tau\!\to\! 0$, uniformly in ${\Tau,\delt}$}.
$$
Then the discrete Poincar\'e inequality yields a
uniform control of the $L^2(Q)$ time translates
$$
\int_0^{T-\tau}\!\!\int_\Om | u_{i,e}^{\ptTau,\delt}(t+\tau,x)
-u_{i,e}^{\ptTau,\delt}(t,x) |^2\,dxdt
$$
of $u_{i,e}^{\ptTau,\delt}$ (here we also use
the uniform time translates of the discrete Dirichlet
data $g_{i,e}^{\ptTau,\delt}$; as usual,
the case $\Gamma_D=\text{\O}$ is treated separately).
Because we can control the space translates of $u_{i,e}^{\ptTau,\delt}$ 
through the uniform $L^2(Q)$ estimate of
$\grad^\ptTau u_{i,e}^{\ptTau,\delt}$ (see Section~\ref{ssec:PoincareSobolev}), 
we conclude that $u_{i,e}^{\ptTau,\delt}$ 
converge strongly in $L^2(Q)$ to to $u_{i,e}$.

\begin{rem}\label{rem:strong-convergences}\rm
Under some stronger proportionality assumptions
on the meshes,  consistency properties
similar to those of Proposition~\ref{PropConsistency}(i),(ii) hold
not only for test functions, but also for
functions in $L^2(0,T;H^1(\Om))$ (cf. \cite{ABH``double''}).
Using the argument of \cite{ABH``double''}, we conclude
that the discrete solution $u_{i,e}^{\ptTau,\delt}$ converges
strongly in $L^2(\Om)$ to $u_{i,e}$. Indeed, from the
discrete Poincar\'e inequality we derive the estimate
\begin{align*}
&\Bigl\| u_{i,e}^{\ptMrond,\delt}
- \mathbb{P}^\ptMrond \circ \,\mathbb{S}^\delt u_{i,e} \Bigr\|_{L^2(Q)}
\\ & \qquad
\leq C(\regmesh,\Om) \Bigl\|  \grad^\ptTau_{\!g_{i,e}^{\!\ptTau,\delt}} u_{i,e}^{\ptTau,\delt}
-  \grad^\ptTau_{\!g_{i,e}^{\!\ptTau,\delt}}\mathbb{P}^\ptTau
\, \circ \mathbb{S}^\delt u_{i,e}\Bigr\|_{L^2(Q)}
\end{align*}
and analogous estimates on the meshes $\dMrond$ and (for version $(C)$) $\diezMrond$.
Then the consistency and strong convergence of the discrete gradients imply the
desired result: we find $\Bigl\| u_{i,e}^{\ptMrond,\delt}
-u_{i,e} \Bigr\|_{L^2(Q)}\to 0$ as $\size(\Tau),\delt\to 0$, and so forth.
\end{rem}

\subsection{A linearised implicit scheme and its convergence}\label{ssec:linear-scheme}
We follow step by step the preceding proof and indicate the
modifications needed to take into account the
linearised-implicit treatment of the ionic current term.
We notice that throughout the calculations of the preceding proof,
\begin{equation}\label{eq:replacement}
\text{$\tilde h(v^{\ptTau,\delt}(\cdot))$ should be replaced by
$b(v^{\ptTau,\delt}(\cdot-\delt))v^{\ptTau,\delt}(\cdot)$};
\end{equation}
where we have set $v^{\ptTau,\delt}(t,\cdot)
:=v^{\ptTau,0}(\cdot)$ for $t\in(-\delt,0]$,
and by $v^{\ptTau,\delt}(\cdot-\delt)$ we mean the function
$(t,x)\in Q\mapsto v^{\ptTau,\delt}(t-\delt,x)$.

\underline{Step 1}. We cannot get the continuous 
dependence with the same technique, but by induction, we get 
uniqueness. Indeed, as soon as the 
uniqueness of $v^{\ptTau,n}$ is justified, we have
$$
\int_\Om \Bigl( b(v^{\ptTau,n}(\cdot))v^{\ptTau,n+1}(\cdot)
- b(v^{\ptTau,n}(\cdot)) \hat v^{\ptTau,n+1}(\cdot)\Bigr)
\Bigl( v^{\ptTau,n+1}(\cdot) - \hat v^{\ptTau,n+1}(\cdot)\Bigr)\geq 0.
$$
This inequality plays the same role as the 
non-negativity of the second term in \eqref{eq:tmp-1}.

\underline{Steps 2 and 4}. The arguments are unchanged,
except for \eqref{eq:replacement}.

\underline{Step 3}. The estimates \eqref{est:Linty-L2} and \eqref{est:L2-0T-H1} remain true.
Notice that the function $b$ is non-negative. Therefore
the estimate \eqref{est:Lr} is replaced by the following one:
\begin{equation}\label{est:mixed}
\iint_Q b(v^{\ptTau,\delt}(\cdot-\delt))\,\Bigl|v^{\ptTau,\delt}(\cdot)\Bigr|^2 \leq C.
\end{equation}
Now we use new arguments. Namely the discrete Sobolev embedding inequality
of Section~\ref{ssec:PoincareSobolev} yields a uniform
$L^2(0,T;L^6(\Om))$ bound on $v^{\ptTau,\delt}$; then
interpolation with the $L^\infty((0,T),L^2(\Om))$ bound \eqref{est:Linty-L2} ensures that
\begin{equation}\label{est:L16-tiers}
 \|v^{\ptTau,\delt}(\cdot)\|_{L^{10/3}(Q)} \leq C.
\end{equation}

\underline{Step 5}. The main difference is the way we ensure the strong
$L^1(Q)$ convergence of $v^{\ptTau,\delt}$ and
the weak $L^1(Q)$ convergence of the
associated ionic current term
\begin{equation}
   \label{eq:ionic-current-linearized-scheme}
   h^{\ptTau,\delt}(\cdot):=b(v^{\ptTau,\delt}(\cdot-\delt))v^{\ptTau,\delt}(\cdot) \,
   -\,L v^{\ptTau,\delt}(\cdot) \,-\,l.
\end{equation}
It is sufficient to treat the nonlinear part of $h^{\ptTau,\delt}(\cdot)$; thus
we can ``forget'' about the two last
terms in \eqref{eq:ionic-current-linearized-scheme}.
Let us first notice that the definition of $b$ and the
growth bound \eqref{eq:h-conseq} on $h$
imply that for some constant $\beta=\beta(\alpha,L,l)$ we have
$$
b(z)\;\leq\; \beta\, (1+|z|^{r-2}).
$$
Then the assumption $r-2<16/3-2=10/3$ made in
Theorem~\ref{th:convergence}(ii) and the uniform $L^{10/3}(Q)$
bound \eqref{est:L16-tiers}
on $v^{\ptTau,\delt}$  ensure the equi-integrability of
the functions $ b(v^{\ptTau,\delt}(\cdot-\delt))$ on $Q$.
Now for any measurable set $E\subset Q$, for all $\delta >0$,
\begin{multline*}
\iint_E |b(v^{\ptTau,\ptDelt}(\cdot-\delt))\,v^{\ptTau,\delt}(\cdot)|
\leq \frac 1\delta \iint_E  b(v^{\ptTau,\delt}(\cdot-\delt))
\\ + {\delta} \iint_Q
b(v^{\ptTau,\delt}(\cdot-\delt,x))\,\Bigl|v^{\ptTau,\delt}(\cdot)\Bigr|^2.
\end{multline*}
Thus estimate \eqref{est:mixed} and the aforementioned
equi-integrability of $ b(v^{\ptTau,\delt}(\cdot-\delt))$
ensure the equi-integrability of the ionic current term
$h^{\ptTau,\delt}(\cdot)$. In particular, from \eqref{eq:h-linearized-impl}
we infer $L^1(Q)$ bounds on the components
of the discrete function $h^{\ptTau,\delt}$:
$$
\sum_{n=0}^N \delt \biggl(\|h^{\ptMrond,n+1}\|_{L^1(\Om)}
+\|h^{\ptdMrond,n+1}\|_{L^1(\Om)}\biggr) \leq C,
$$
for version $(C)$, the term on $\diezMrond$ is also controlled.
At this stage, the full strength of Proposition~\ref{prop:KruzhkovLemma}
is put into service: indeed, we only have the $L^1$ control
of the right-hand side of the discrete  evolution equations \eqref{all-AbstractScheme}.
We infer the strong $L^1(Q)$ convergence (along a sub-sequence) for $v^{\ptTau,\delt}$.
Then from the Vitali theorem and the fact that
$v^{\ptTau,\delt}(\cdot-\delt)-v^{\ptTau,\delt}(\cdot)\to 0$ in $L^1(Q)$
and a.e., we get the strong
convergence of $h^{\ptTau,\delt}(\cdot)$ to $h(v)$.

\underline{Steps 6 and 7}. The arguments remain unchanged, taking
into account \eqref{eq:replacement}.

\section{Numerical experiments}\label{sec:Numerics}

For the numerical simulations, the bidomain problem \eqref{S1-p} is
reformulated in terms of $v$ and $u_e$ only; the elimination of
$u_i$, thanks to the relation $v=u_i-u_e$, decreases the
number of unknowns per primal/dual volume from three to two.

In terms of $v$ and $u_e$, the parabolic type
problem \eqref{S1-p} is turned into the
following elliptic-parabolic problem:
\begin{equation}
    \label{eq:num-model}
   \left\{
     \begin{split}
       \dsp
        \Div (\bM_e(x)+\bM_i(x))\Grad u_e
        +
        \Div \bM_i(x)\Grad v = 0
        \qquad (t,x)\in Q,
        \\ \dsp
        \varepsilon \pt v
        +\varepsilon^2\Div \bM_e(x)\Grad u_e
        +h[v]= \Iap
        \qquad (t,x)\in Q.
    \end{split}
    \right.
\end{equation}
For $\varepsilon=1$, problem  \eqref{eq:num-model} is equivalent
to the original problem  \eqref{S1-p}; indeed,
the first line in \eqref{eq:num-model} results from
the summation of the two lines in \eqref{S1-p}, with $u_i=v+u_e$.
The bidomain problem will be considered under formulation
\eqref{eq:num-model} throughout this section. We point
out that numerical schemes associated with
formulation \eqref{eq:num-model} are equivalent with
numerical schemes for the formulation \eqref{S1-p}, following
the same algebraic operations on the discrete equations.

A scaling parameter $\varepsilon$ has also been introduced in \eqref{eq:num-model}.
Its presence clearly makes no difference for the mathematical
study of the previous sections, but it greatly helps the solutions of
\eqref{eq:num-model} to behave as  excitation potential waves (which waves
\eqref{eq:num-model} is supposed to model). More precisely, following
the analysis in \cite{ColliGuerriTentoni90}, such
a scaling parameter together with a cubic shape for
$h[v]:=v(v-1)(v-\alpha)$ provide a simplified model for 
spreading of excitation in the myocardium;
the parameters $\varepsilon$ and $\alpha$ have been
set respectively to 1/50 and 0.2. The way  excitation
waves are generated is detailed in Subsection \ref{subsec:num-settings}.

The convergence of the DDFV space discretisations for the bidomain problem
has been  justified in Theorem~\ref{th:convergence} for two
different discretisations (the fully implicit in time and the
linearised semi-implicit one) of the ionic current term. 
Here we complement these theoretical studies by the
numerical experiments on the third (and the most
important in practise) case of fully explicit in time
discretisation of the ionic current term.
The implementation of the scheme  is detailed
in Section~\ref{subsec:num-implem}.
Although the theoretical study of this scheme is made difficult
for technical reasons, we do observe convergence numerically.

The convergence result in Theorem \ref{th:convergence} involves
a comparison between the exact solution and the discrete
solution in the $L^2(Q)$ norm, the discrete solution
being interpreted as the weighted sum of two piecewise
constant functions (on the primal and on the dual cells). The
practical computation of this $L^2$ distance is  difficult.
Namely, a (coarse) numerical solution on a given (coarse) mesh
has to be compared with a second (reference)
numerical solution computed on a reference mesh aimed to
reproduce the exact solution, unknown in practise. The reference
mesh will not be here a refinement of the coarse one, thus
the precise computation of an $L^2$ norm between the coarse
and the reference numerical solutions, thought as
piecewise constant functions, is an awful task.
Therefore we use a slightly different convergence indicator, as
presented in Section~\ref{subsec:num-results}.
The quantity \eqref{eq:def-space-time-err} provides a more
convenient measure of the square of the $L^2(Q)$ norm.
The convergence with respect to this variant of discrete $L^2$ norm will be studied here.
Notice that under reasonable regularity assumptions on the mesh,
the two discrete $L^2$ norms are equivalent.

Convergence will be studied both in 2D and in 3D. The introduction
of the two dimensional case is mostly intended to confirm the
results in dimension three: because of the numerical
facilities in dimension two (smaller growth
of the problem size under refinement), it allows a
deeper insight into the asymptotic behaviours.

\subsection{Settings}\label{subsec:num-settings}
Problem \eqref{eq:num-model} is considered for a reaction
term $v\mapsto h[v]$ set on the domain $\Om=[0,1]^d$,  $d=2,3$ denoting
the space dimension. We consider solutions under the form of  excitation potential
waves spreading from the domain centre
towards its boundary, as depicted in Figure \ref{fig:2d-prop} in the
two dimensional case, relatively to some medium anisotropy detailed below.
\begin{figure}[!ht]
  \centering
  \begin{tabular}{ccc}
    \includegraphics[width=110pt]{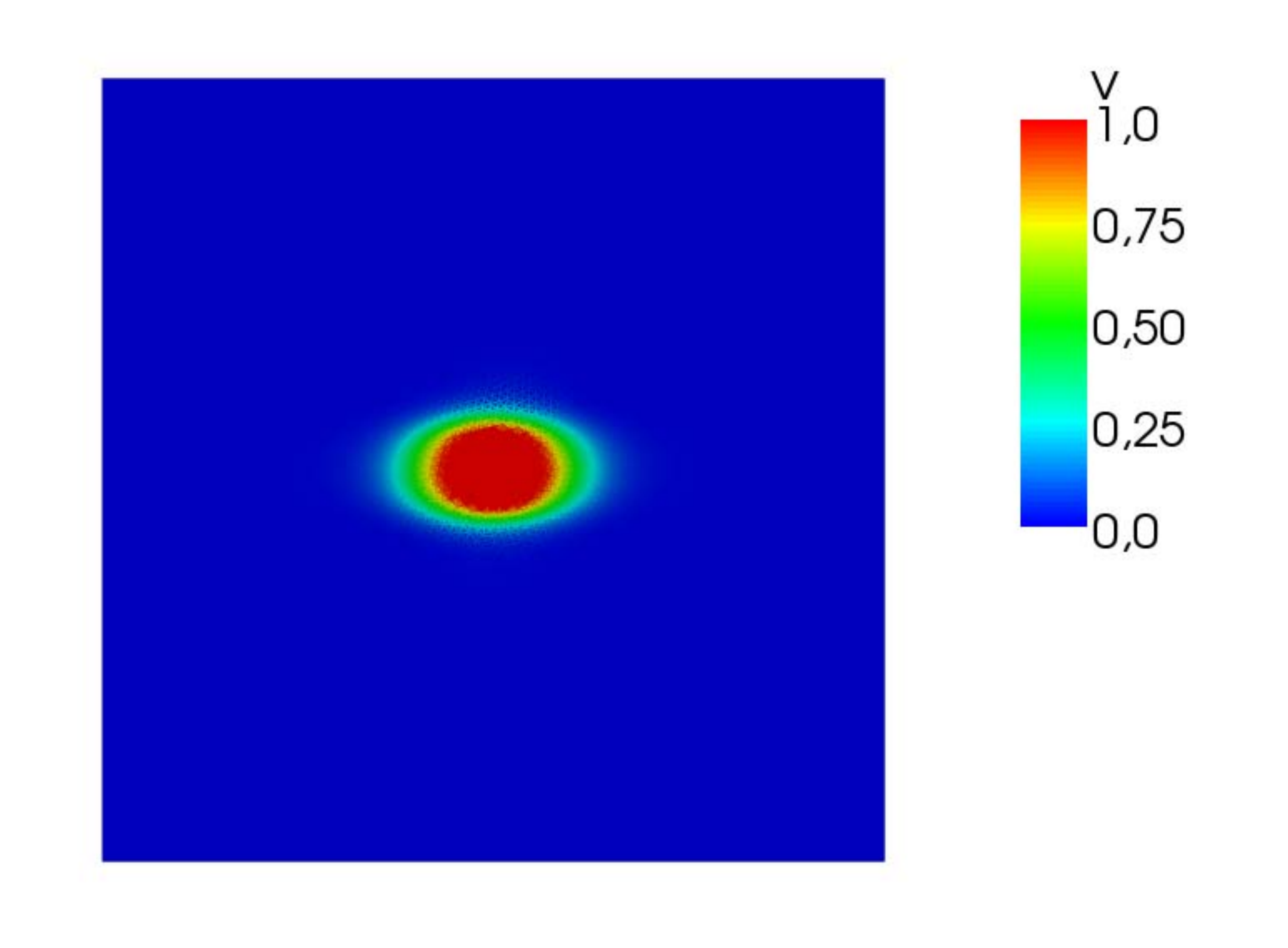}
    &
    \includegraphics[width=110pt]{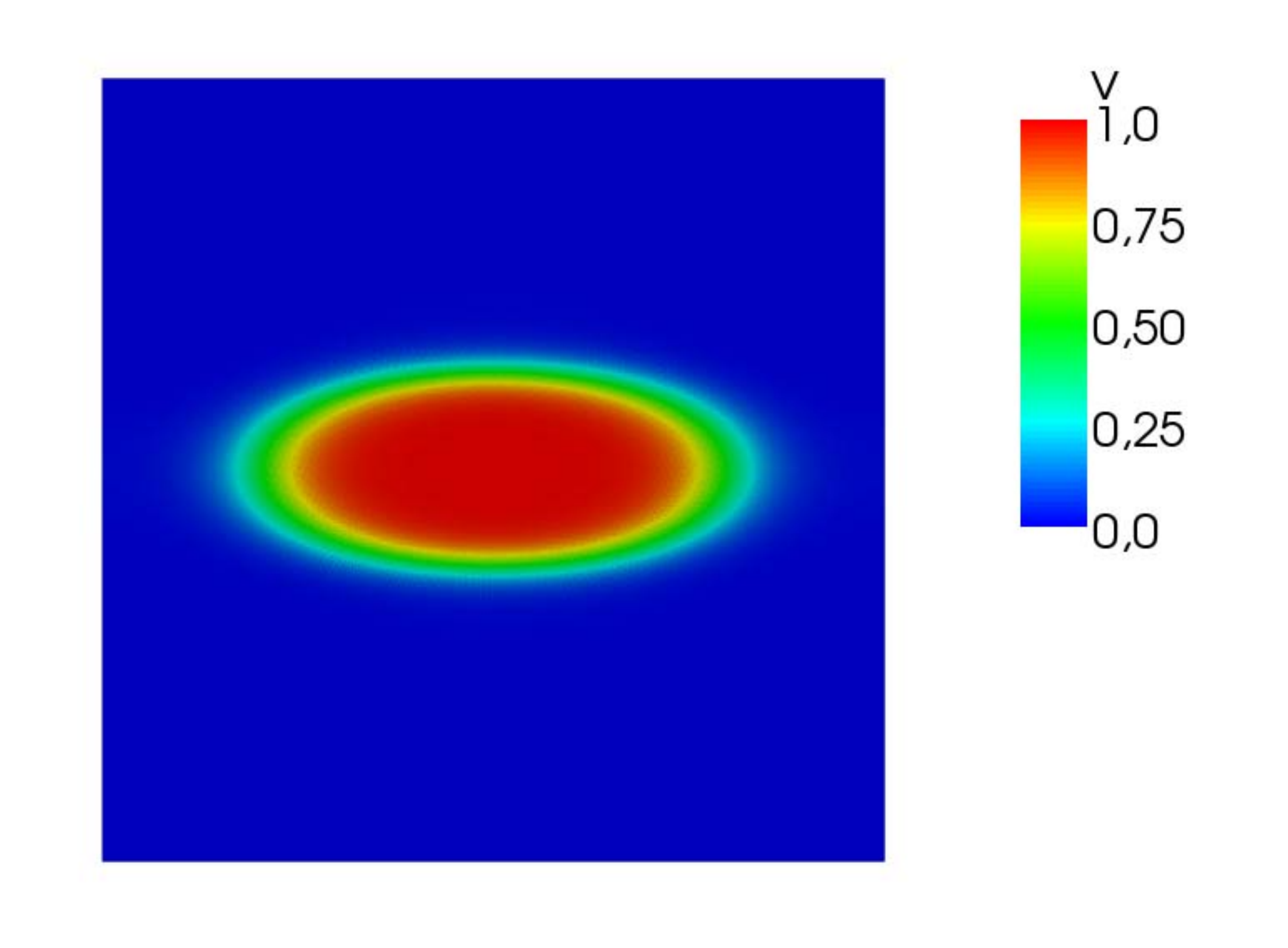}
    &
    \includegraphics[width=110pt]{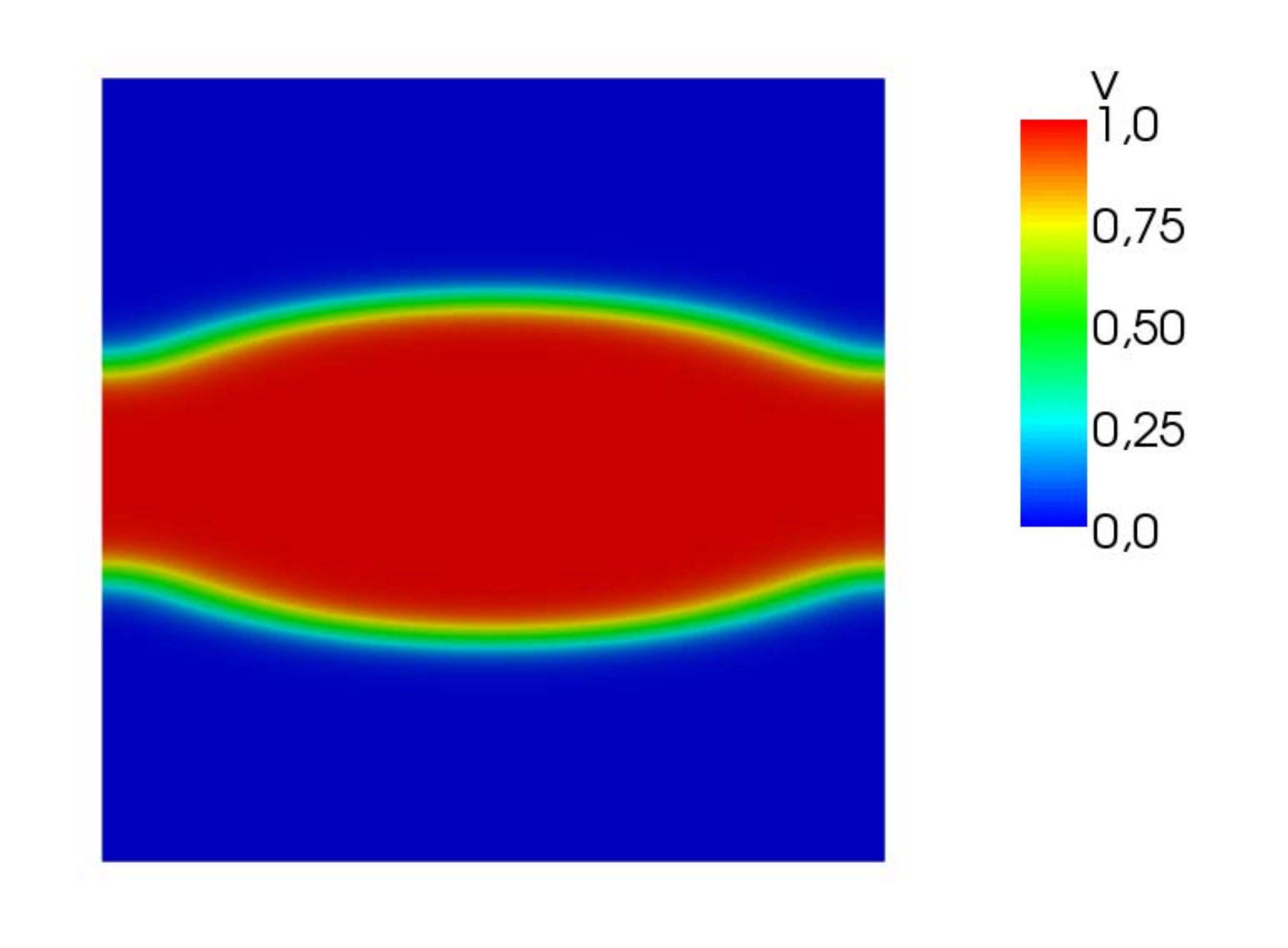}
    \\
    \includegraphics[width=110pt]{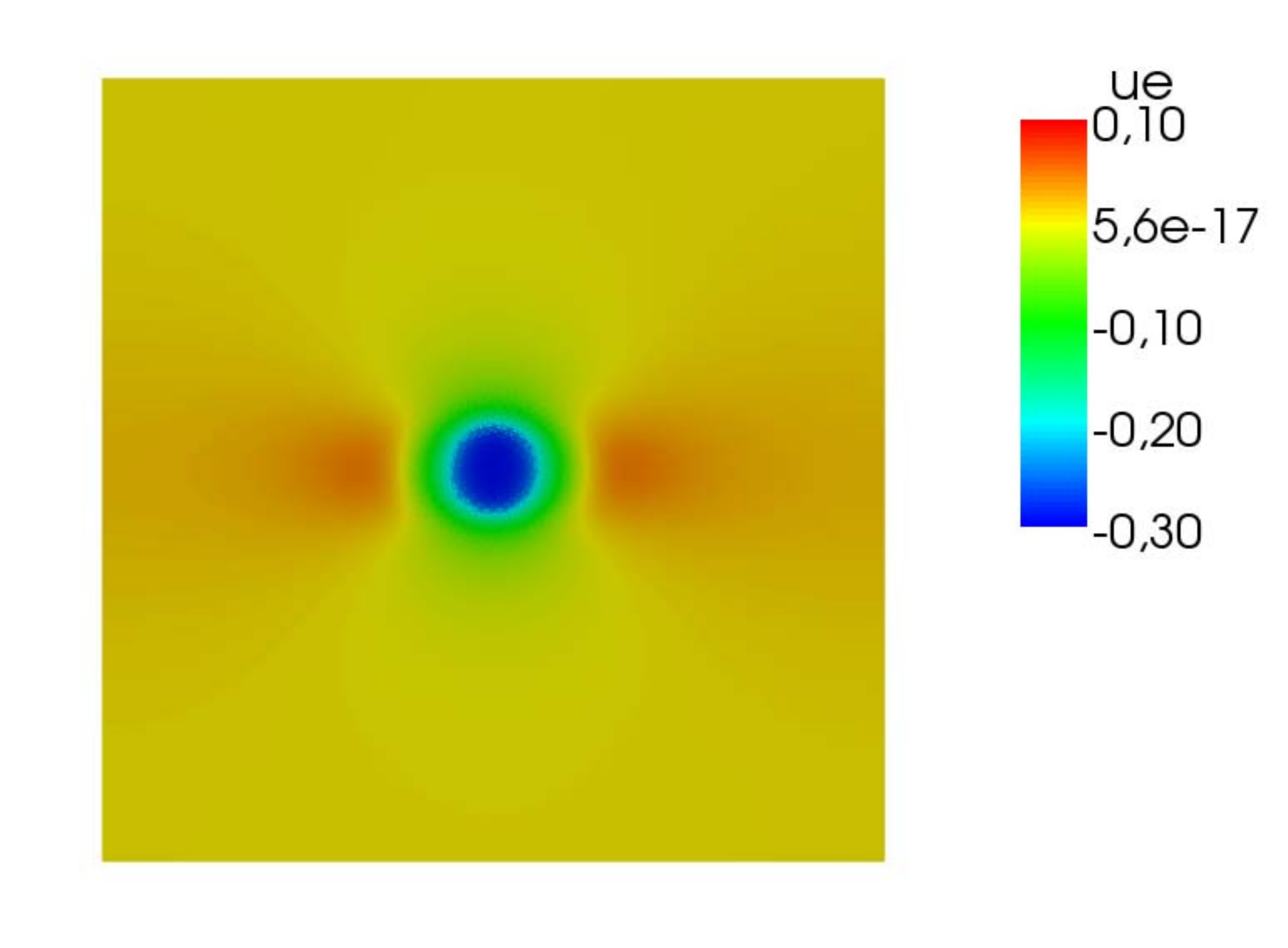}
    &
    \includegraphics[width=110pt]{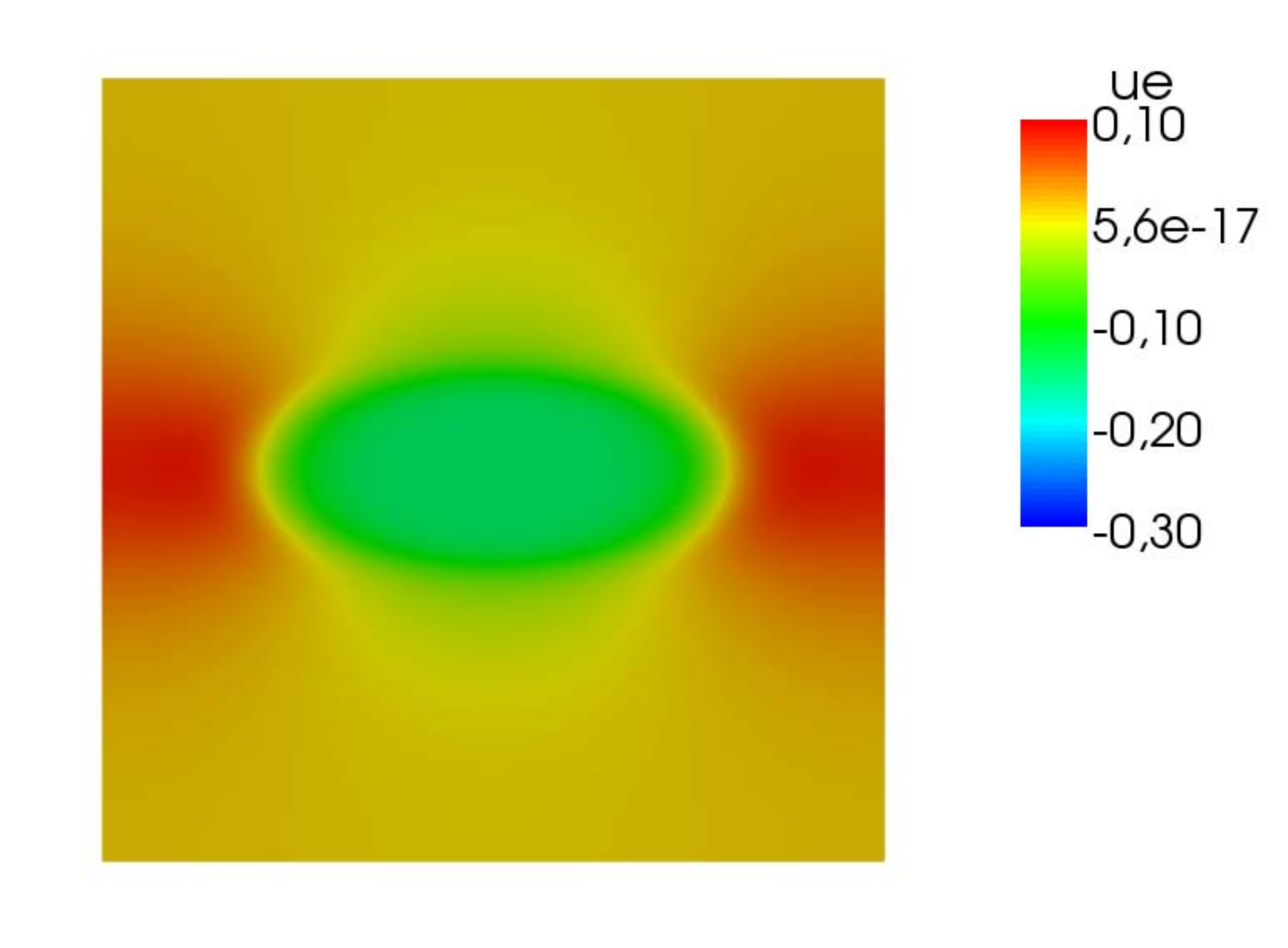}
    &
    \includegraphics[width=110pt]{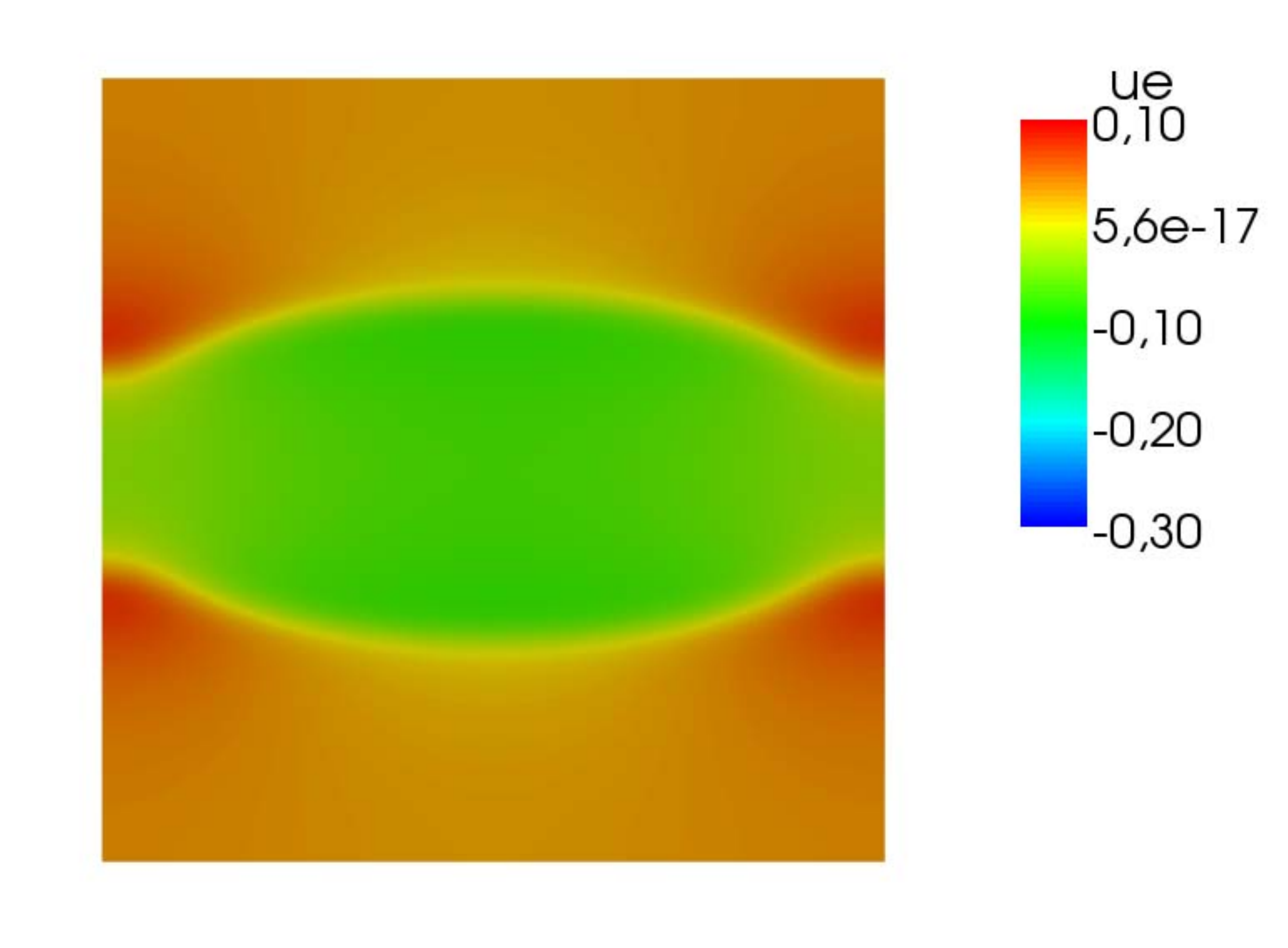}
  \end{tabular}
  \caption{
  Reference solution in the two-dimensional case.
    Above: spreading of the transmembrane potential $v$ excitation wave.
    From left to right, the three pictures correspond to times $t$=0.2, 0.6 and 1.2 after
    the stimulation initiation, stimulation duration being 0.1.
    Below: associated extracellular potential $u_e$.
  }
  \label{fig:2d-prop}
\end{figure}

Excitation is initiated by applying a centred stimulation
during a short period of time, precisely: $\Iap(x,t)=0.9$ for $1<t<1.1$
and $\vert x-x_0\vert<0.1$ ($x_0$ denoting the centre
of $\Om$) and $\Iap(x,t)=0$ otherwise. The
initial condition for $v$ is  uniformly set to 0. A homogeneous
Neumann boundary condition is considered
on $\partial\Om$, uniqueness is ensured by adding
the normalisation condition \eqref{eq:Ui-normalized} on $u_e$. 
The domain $\Om$ is assumed to be composed of a
bundle of parallel horizontal muscular fibres, resulting
in the following choice for the anisotropy tensors $\bM_i(x)$ and $\bM_e(x)$:
\begin{equation}
  \label{eq:num-def-tensor}
  \bM_i(x)=\text{Diag}(\lambda_i^l,\lambda_i^t,\lambda_i^t)~,\quad
  \bM_e(x)=\text{Diag}(\lambda_e^l,\lambda_e^t,\lambda_e^t)~,
\end{equation}
the values for the longitudinal ($l$) and transverse ($t$)
conductivities for the intra and extra-cellular medias
have been taken from \cite{leguyader_01}: the resulting
anisotropy ratios for the intra and extra-cellular medias
respectively are 9.0 and 2.0 between the longitudinal
and transverse directions.

The numerical solution for the transmembrane
potential $v$  takes the form of an excitation wave propagating
across the domain from the stimulation site towards the
boundary and from the rest potential $v=0$ to the activation potential $v=1$.
A sharp but smooth wavefront for $v$ displays an elliptic
shape away from the boundary, which is induced
by the media anisotropy. A reference solution is
generated on a mesh using a 1 147 933 (resp. 479 873)
nodes in dimension 3 (resp. 2).

\subsection{Implementation}\label{subsec:num-implem}
Let us fix a mesh $\Tau$.
For simplicity, discrete functions $w,v \in \R^\ptTau$ will also
be considered as one-column real matrices in this
subsection, $w^T$, $v^T$ denoting their transpose
one-row real matrices. Let us first introduce the
mass matrix (diagonal here) $\mass\in \text{Mat}(\R^\ptTau)$:
\begin{equation*}
  \forall w,v \in \R^\ptTau:\quad
  \Bleft w,v\Bright_{\Om} = w^T \mass v.
\end{equation*}
Relative to \eqref{eq:num-def-tensor}, uniform
discrete tensors $\bM_{i,e}^\ptTau$ are considered here, with value
\begin{displaymath}
  \bM_i^\ptTau=\text{Diag}(\lambda_i^l,\lambda_i^t,\lambda_i^t)~,\quad
  \bM_e^\ptTau=\text{Diag}(\lambda_e^l,\lambda_e^t,\lambda_e^t)~,
 \end{displaymath}
on each diamond $\DM$. The discrete gradient being defined
relative to the homogeneous Neumann
boundary condition, and simply denoted by
$\grad^\ptTau$, the two stiffness matrices
$\stiff_i$ and $\stiff_e$ are introduced as:
\begin{equation*}
  \forall w,v \in \R^\ptTau:\quad
\Aleft \bM_i^\ptTau \grad^\ptTau v,
\grad^\ptTau w\Aright_{\Om}=
v^T \stiff_i w
,\quad
\Aleft \bM_e^\ptTau \grad^\ptTau v, \grad^\ptTau w\Aright_{\Om}=
v^T \stiff_e w.
\end{equation*}
These stiffness matrices are positive, symmetric
matrices, although not definite since a Neumann homogeneous
boundary condition is considered.

The following semi-implicit Euler scheme is considered: given
$v^n,\Iap^n\in\R^\ptTau$, determine
$v^{n+1},u_e^{n+1}\in\R^\ptTau$ such that,
\begin{equation}\label{eq:num-sc1}
  \left\{\begin{array}{l}
      \dsp
      \Div^\ptTau[ (\bM_e^\ptTau+\bM_i^\ptTau)\grad^\ptTau u_e^{n+1}]
      +
      \Div^\ptTau\left[ \bM_i^\ptTau\grad^\ptTau v^{n+1}\right] = 0,
      \\[5pt] \dsp
      \varepsilon \frac{v^{n+1} - v^{n}}{\Delt}
      + \varepsilon^2
      \div^\ptTau[\bM_e^\ptTau \grad^\ptTau u_e^{n+1}]+
      h[v^n] = \Iap^{n}.
    \end{array}\right.
\end{equation}
Writing separately the scalar product
$\Bleft \cdot,\cdot\Bright_{\Om}$
of each line in \eqref{eq:num-sc1} with all test functions
$w\in\R^\ptTau$
and using the discrete duality \eqref{eq:discr-duality} leads
to the following equivalent formulation written in matrix form:
\begin{equation}
  \label{eq:num-sc2}
  M \left\vert
    \begin{array}{l}
      \dsp
      u_e^{n+1}
      \\[15pt]\dsp
      v^{n+1}
    \end{array}
  \right.
  =
  \left\vert
    \begin{array}{l}
      \dsp
      0
      \\[15pt]\dsp
      \mass [v^{n} + \Delt( \Iap^{n}-h[v^n])/\varepsilon ]
    \end{array}
  \right.
  ~~ ,\quad
  M:=
  \left [
    \begin{array}{cc}
      \dsp
      \stiff_i+\stiff_e & \stiff_i
      \\[5pt] \dsp
      -\varepsilon \Delt \stiff_e & \mass
    \end{array}
  \right ].
\end{equation}
To ensure uniqueness on $u_e$, the normalisation condition
\eqref{eq:Ui-normalized}
 is discretised as:
\begin{equation}
  \label{eq:num-norm-cond-disc}
  \Bleft u_e^{n+1}, U^\ptMrond  \Bright_{\Om} =
  0 =
  \Bleft u_e^{n+1}, U^\ptdMrond \Bright_{\Om},
\end{equation}
where $U^\ptMrond$ (resp. $U^\ptdMrond$) is the discrete function equal to 1 relatively to each primal (resp. dual) control volumes and to 0 elsewhere.

The implementation of \eqref{eq:num-sc2} therefore reads as a three-step algorithm:

at each time step,
\begin{enumerate}
\item
  compute $y_2:=\mass [v^{n} + \Delt( \Iap^{n}-h[v^n])/\varepsilon ]$.
  The matrix $\mass$ being diagonal, computations for this step are cheap;
\item
  determine a solution $x=(u_e^{n+1},v^{n+1})^T$ to the global system $Mx=y$ for
  $y=(0,y_2)^T$;
\item
  normalise $u_e^{n+1}$ using condition \eqref{eq:num-norm-cond-disc}.
  For the same reason as for step one, this step is a cheap one.
\end{enumerate}

Because of the large size of the considered problem
(1.1 million of nodes in 3D for the most refined
mesh, i.e. 2.2 million of lines for the matrix $M$) and
because of the relatively non-compact sparsity pattern for $M$ in 3D,
step 2 is not an easy task. Therefore, a careful attention
has to be paid to the preconditioning of $M$: the strategy
adopted here is  detailed in \cite{pierre-2010-precond}.

\subsection{Numerical tests and results}\label{subsec:num-results}

The convergence of the DDFV scheme is numerically analysed
comparing the reference solution described in Subsection \ref{subsec:num-settings}
with numerical solutions obtained on coarser meshes.
In 3D, four tetrahedral meshes have been considered:
from 2~559 to   1~147~933 nodes, between two meshes
the \textit{mesh size} is divided by 2, two successive meshes
are not obtained via refinement.
In 2D, six meshes are used: from 489 to 479~873  nodes.
The time step $\Delt$ is also divided by two each time the
space resolution is divided by 2; the starting time step (on the coarsest mesh) is 0.02.

To compare numerical solutions defined on different meshes, a
projection is needed: this is done as follows.
Let  $\Tau^r$ and $\Tau^c$ be the reference mesh and a coarser
mesh respectively, and let $\R^{\ptTau_r}$, $\R^{\ptTau_c}$  respectively
denote the associated spaces of discrete functions.
Consider the simplicial mesh $\mathcal{S}^r$ (respectively $\mathcal{S}^c$)
whose  cells are obtained by cutting all diamonds of $\Tau^r$ (resp. $\Tau^c$)
in two along the interface.   A discrete function
$u^r\in\R^{\ptTau_r}$ (resp. $u^c\in\R^{\ptTau_c}$) consists
in one scalar associated to each vertex and each cell centre of the
mesh $\Tau^r$ (resp. $\Tau^c$), thus to each vertex of
$\mathcal{S}^r$ (resp. $\mathcal{S}^c$). It is therefore natural
to associate to $u^c$ (resp. $u^r$) the continuous
function $\tilde{u}^r$ (resp. $\tilde{u}^c$) piecewise affine on the
cells of $\mathcal{S}^r$ (resp. $\mathcal{S}^c$) and
whose values at the vertices of $\mathcal{S}^r$ (resp. $\mathcal{S}^c$) are given by the
discrete function $u^r$ (resp. $u^c$). A projection
$u^{c\to r}\in\R^{\ptTau_r}$ of a (coarse) discrete function
$u^c\in\R^{\ptTau_c}$ is then simply defined by computing the
values of the function $\tilde{u}^c$ on the vertices of  $\mathcal{S}^r$.
The relative error between $u^r\in\R^{\ptTau_r}$ and $u^c\in\R^{\ptTau_c}$
in $L^2(\Om)$ norm is defined as
\begin{equation}
  \label{eq:def-space-err}
  e_{\Om,2}(u^r,u^c)^2:=\dfrac{
    \int_\Om
    \vert \tilde{u}^r - \tilde{u}^{c\to r} \vert ^ 2
    dx
  }
  {
    \int_\Om
    \vert \tilde{u}^r \vert ^ 2
    dx
  }.
\end{equation}
Numerically, these integrals are evaluated using an order two
Gauss quadrature on the cells of $\mathcal{S}^r$, leading to
an exact evaluation up to rounding errors.

The three following tests have been performed.

\subsubsection{Test 1; activation time convergence}
\begin{figure}[!ht]
  \centering
  \begin{tabular}{ccc}
    \includegraphics[width=110pt]{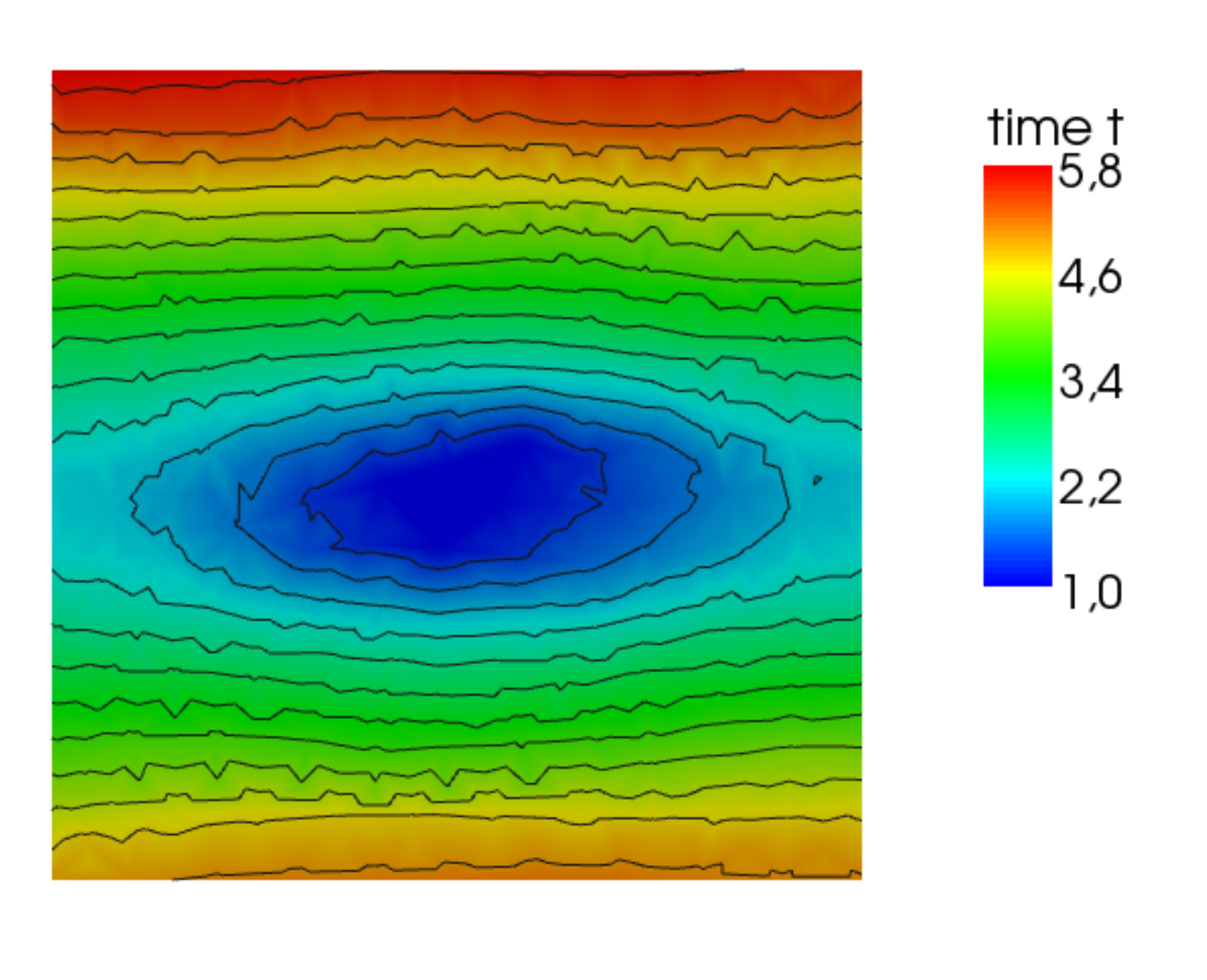}
    &
    \includegraphics[width=110pt]{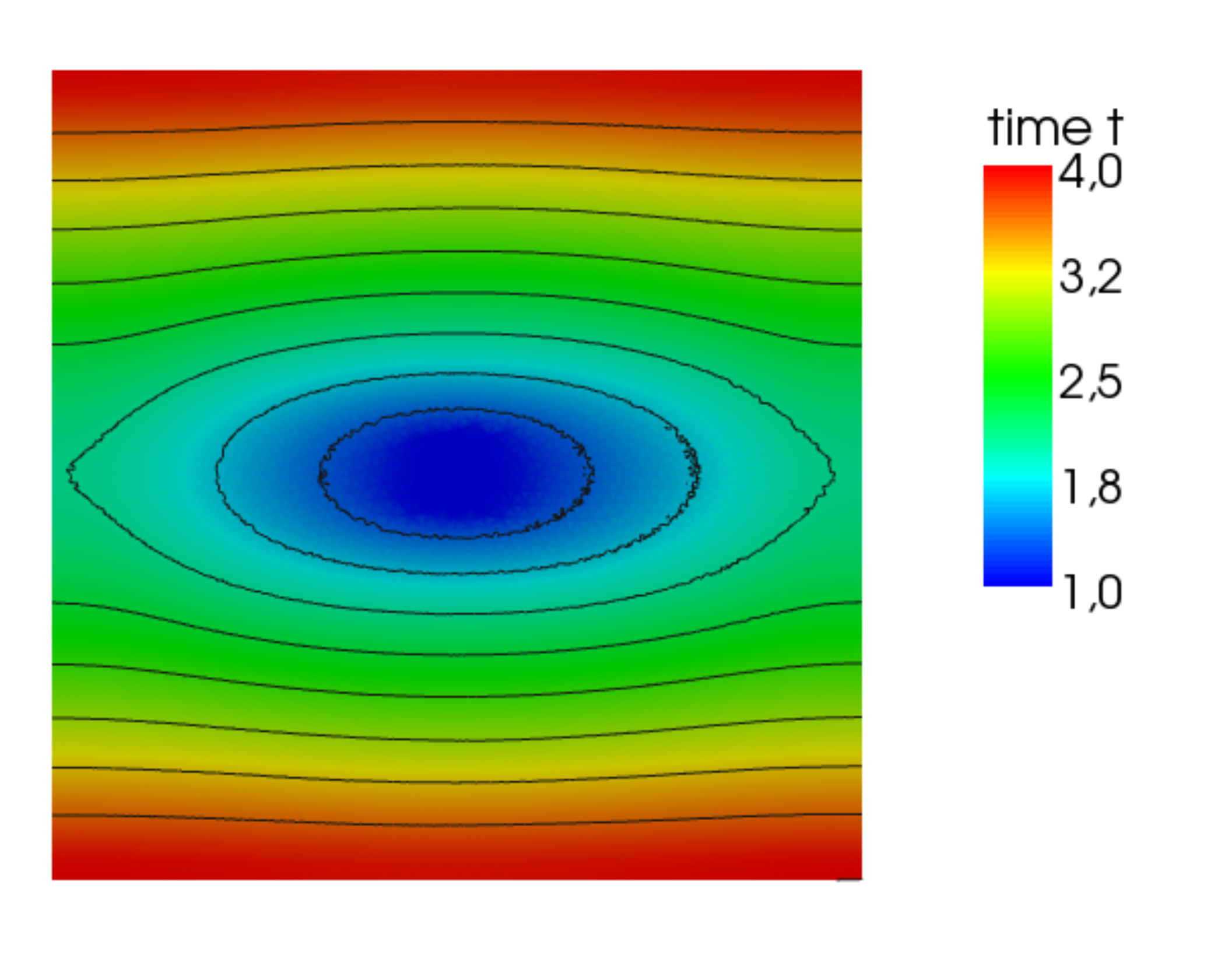}
    &
    \includegraphics[width=110pt]{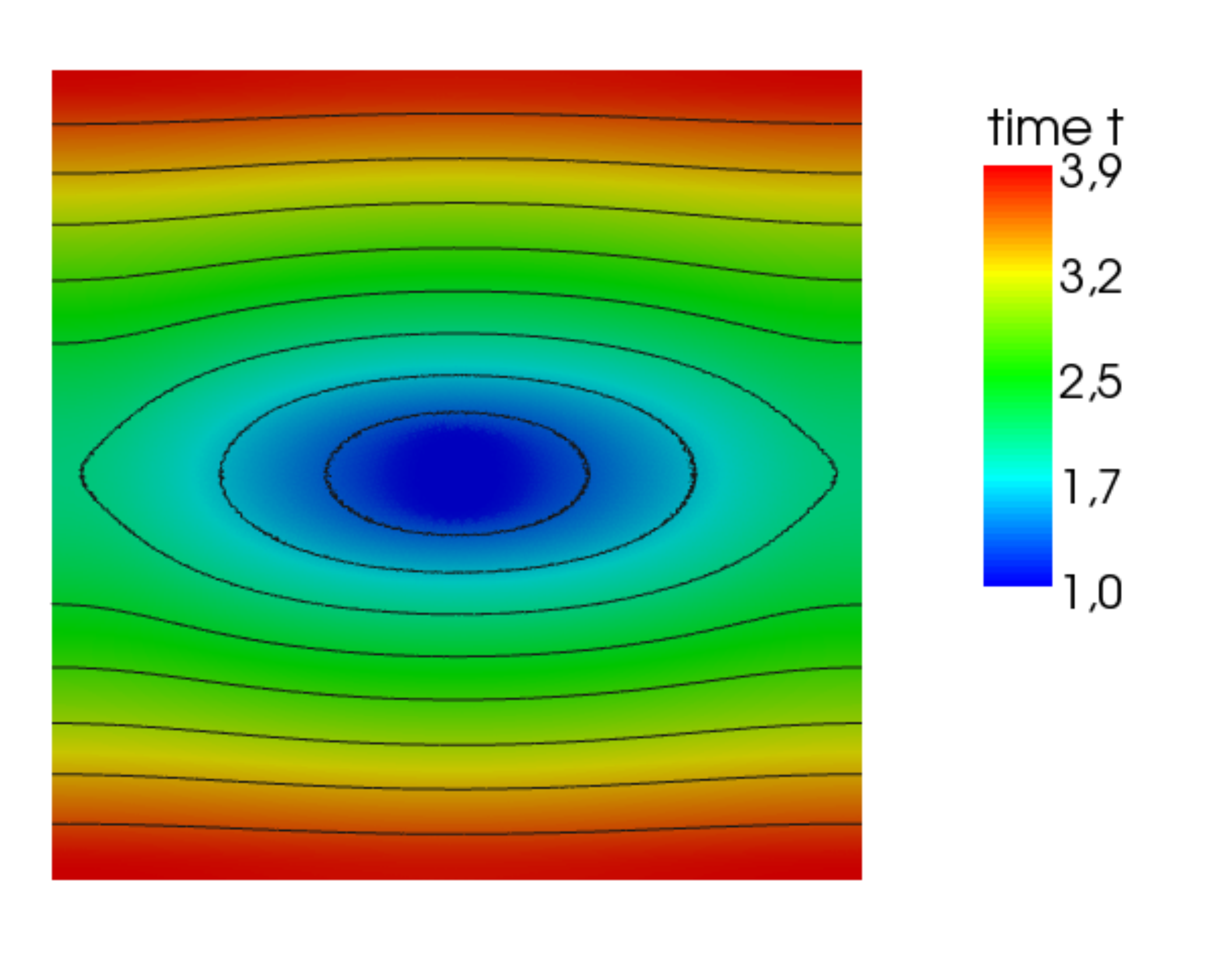}
  \end{tabular}
  \caption{
  Activation time in dimension 2 for three different meshes, the
  isolines (in black) are separated by 1/3 unit of time. The stimulation
  is initiated at time $t=1$. From the left (coarsest mesh)
  to the right (reference solution) the three
  different activation time mappings
  have been computed on meshes with 439, 7569 and 479 873 nodes respectively.}
  \label{fig:2d-conv-act-time}
\end{figure}
The activation time mapping $\phi:~\Om\mapsto \mathbb{R}$ is
defined at each point $x$ as the time $\phi(x)=t$ such that the
transmembrane potential $v(x,t)=s$ for the threshold value $s:=0.9$. The value
$\phi(x)$ tells us at what time the excitation wave reaches the point $x$, the
activation time mapping thus is of crucial importance in terms
of physiological interpretation of the model.
Activation time in 2D computed on various meshes are depicted on Figure
 \ref{fig:2d-conv-act-time}.
The discrepancy between the activation mappings
$\phi^r$ and $\phi^c$ computed at the reference and coarse
levels respectively is evaluated using the relative error in the
$L^2(\Om)$ norm defined in \eqref{eq:def-space-err}.
Numerical results for activation time convergence are given in
Table \ref{tab:act-time-conv}.

\begin{table}
  \centering
  \begin{tabular}{ccc}
    \begin{tabular}{|lc|clc|}
      \hline
      ~~~\# nodes &~~~~~~&~~~~~& errors $e_{\Om,2}$ &~~~~~~~~
      \\ \hline \hline
      489&&&
      1.611
      &\\ \hline
      1913&&&
      9.130  $10^{-2}$
      &\\ \hline
      7 569&&&
      1.776 $10^{-2}$
      &\\ \hline
      30 113&&&
      4.850 $10^{-3}$
      &\\ \hline
      120 129&&&
      1.139 $10^{-3}$
      &\\ \hline
      479 873 &&& reference
      &\\ \hline
    \end{tabular}
    &$\qquad$&
    \begin{tabular}{|lc|clc|}
      \hline
      ~~~\# nodes &~~~~~~&~~~~~& errors $e_{\Om,2}$ &~~~~~~~~
      \\ \hline \hline
      2 559&&&
      1.110
      &\\ \hline
      19 500&&&
      8.195 $10^{-2}$
      &\\ \hline
      148 242&&&
      1.281 $10^{-2}$
      &\\ \hline
      1 147 933 &&& reference
      &\\ \hline
    \end{tabular}
    \\
    \\
    2D case && 3D case
  \end{tabular}
  \caption{Activation time mappings convergence.
  The errors are relative errors in $L^2(\Om)$ norm
  as defined in \eqref{eq:def-space-err}}
  \label{tab:act-time-conv}
\end{table}

Convergence is numerically observed here both in 2D and in 3D.
Moreover, the figures obtained in the two dimensional case indicate
an order two convergence relatively to
the mesh size. Such a conclusion, although
plausible, is not possible in the three dimensional case: to be observed it would
require a much finer reference mesh which is not affordable
in terms of computational effort.

\subsubsection{Test 2; space convergence}
Let us denote by $v^r$ and $u_{e}^r$, (resp. $v^c$ and $u_{e}^c$) the
transmembrane potential and extracellular potential
computed on the reference mesh (resp. a coarse mesh).
The discrepancy between $v^c$, $v^r$ and $u_{e}^r$, $u_{e}^c$
at  a chosen time $t$ has been computed using the relative
error in the $L^2(\Om)$ norm \eqref{eq:def-space-err}.
Three fixed times $t$ have been considered:
$t=$ 1.2, 1.6 and 2.2, corresponding to the reference
solution depicted in Figure \ref{fig:2d-prop}.

\begin{table}
  \centering
  \begin{tabular}{c}
    \begin{tabular}{|l|ccc|ccc|}
      \hline
      \# nodes
      & $t=1.2$& $t=1.6$& $t=2.2$
      & $t=1.2$& $t=1.6$& $t=2.2$
      \\ \hline \hline
      489
      & 0.51 & 0.37  & 0.45
      & 0.45 & 0.34  & 0.59
      \\ \hline
      1913
      & 0.24 & 8.87 $10^{-2}$  & 0.14
      & 0.22 & 8.27 $10^{-2}$  & 0.17
      \\ \hline
      7569
      & 0.13  & 6.01 $10^{-2}$  & 1.42 $10^{-2}$
      & 0.12  & 5.38 $10^{-2}$  & 1.99 $10^{-2}$
      \\ \hline
      30113
      & 6.24  $10^{-2}$ & 3.29 $10^{-2}$ & 1.28 $10^{-2}$
      & 5.62  $10^{-2}$ & 2.89 $10^{-2}$ & 1.71 $10^{-2}$
      \\ \hline
      120129
      & 1.25 $10^{-2}$ & 5.26 $10^{-3}$ & 1.74 $10^{-3}$
      & 1.13 $10^{-2}$ & 5.31 $10^{-3}$ & 2.86 $10^{-3}$
      \\ \hline
    \end{tabular}
    \\
    2D case
    \\ \\
    \begin{tabular}{|l|ccc|ccc|}
      \hline
      \# nodes
      & $t=1.2$& $t=1.6$& $t=2.2$
      & $t=1.2$& $t=1.6$& $t=2.2$
      \\ \hline \hline
      2559
      &0.43 & 0.57 & 0.65
      &0.42 & 0.63 & 0.91
      \\ \hline
      19 500
      &0.22 & 0.14 & 0.20
      &0.19 & 0.17 & 0.28
      \\ \hline
      148 242
      &0.10           & 6.21 $10^{-2}$ & 2.88 $10^{-2}$
      &9.45 $10^{-2}$ & 6.40 $10^{-2}$ & 4.33 $10^{-2}$
      \\ \hline
    \end{tabular}
    \\
    3D case
    \\
  \end{tabular}
\caption{ Convergence of the transmembrane potential 
$v$ and of the extracellular potential $u_e$ at three fixed times: $t=$ 1.2, 1.6 and 2.2. 
The reference solution for these chosen times are depicted in Figure \ref{fig:2d-prop}.
Above (resp. below) are reported the errors in the two (resp. three) dimensional case.
On each table line the three first figures after the number of
nodes correspond to the errors on $v$, whereas the three last ones correspond to $u_e$.
Errors are relative errors in $L^2(\Om)$ norm as defined in \eqref{eq:def-space-time-err}.}
 \label{tab:space-conv}
\end{table}

Because of the particular wavefront-like shape of the solution, this error
can be geometrically reinterpreted as follows. Consider at time $t$ the
sub-region of $\Om$ that is activated according to the reference
solution $u_{e}^r$ but not activated according to the coarse solution $u_{e}^c$. The numerator
in \eqref{eq:def-space-time-err} simply measures
the square root of the area of this sub-region.
Using the elliptic shape of activated regions, one gets that $e_{\Om,2}(u^r,u^c)$
measures the square root of the relative error on the wavefront propagation velocity
(more precisely the square root of the sum of the axial
and transverse wavefront propagation velocities relative errors).
This error, as in test case 1, is of prime physiological importance.
Numerical results for this test are displayed in Table \ref{tab:space-conv}.
Although convergence is well illustrated, no particular
asymptotic behaviour can be inferred from these results.

\subsubsection{Test 3; space and time convergence}
A numerical space and time convergence indicator $e_{Q,2}$ is
introduced here, aiming to reproduce an $L^2(Q)$ relative error
between a coarse and the reference solution  ($Q=(0,T)\times \Om$).
Convergence is measured using this indicator, and this third test
therefore is intended to numerically illustrate
the convergence result of Theorem \ref{th:convergence}.

\begin{table}
  \centering
  \begin{tabular}{ccc}
    \begin{tabular}{|lc|clc|}
      \hline
      ~~~\# nodes &~~~~~~&~~~~~& errors $e_{Q,2}$ &~~~~~~~~
      \\ \hline \hline
      489&&&
      0.481
      &\\ \hline
      1913&&&
      0.237
      &\\ \hline
      7569&&&
      6.469 $10^{-2}$
      &\\ \hline
      30113&&&
      1.746 $10^{-2}$
      &\\ \hline
      120129&&&
      4.167 $10^{-3}$
      &\\ \hline
      479 873 &&& reference
      &\\ \hline
    \end{tabular}
    &$\qquad$&
    \begin{tabular}{|lc|clc|}
      \hline
      ~~~\# nodes &~~~~~~&~~~~~& errors $e_{Q,2}$&~~~~~~~~
      \\ \hline \hline
      2559&&&
      0.673
      &\\ \hline
      19 500&&&
      0.219
      &\\ \hline
      148 242&&&
      4.920  $10^{-2}$
      &\\ \hline
      1 147 933 &&& reference
      &\\ \hline
    \end{tabular}
    \\
    \\
    2D case && 3D case
  \end{tabular}
  \caption{Space and time convergence for the transmembrane potential $v$.
    Errors are relative errors in the  $L^2(Q)$ norm
    as defined in \eqref{eq:def-space-time-err}
  }
  \label{tab:space-time-conv}
\end{table}

The transmembrane potentials $v^r$ and $v^c$ have been
recorded at the same times, namely $t_n=n \delta t$, for $\delta t=1/100$
unit of time,  $n=0, \dots, T/\delta t$ and $T=3$.
The corresponding numerical solutions are denoted $v^{r}_n$
and $v^{c}_n$. A relative error in the $L^2(Q)$ norm
between $v^r$ and $v^c$ is introduced as follows:
\begin{equation}
  \label{eq:def-space-time-err}
  e_{Q,2} (v^r,v^c) ^2:=\dfrac
  {
    \sum_{n=0}^N
    \int_\Om
    \vert \tilde{v_n}^r - \tilde{v_n}^{c\to r} \vert ^ 2
    dx
    \delta t
  }
  {
    \sum_{n=0}^N
    \int_\Om
    \vert \tilde{v_n}^r \vert ^ 2
    dx
    \delta t
  }
  ~,\quad N=T/\delta t.
\end{equation}

Numerical results  are given in Table \ref{tab:space-time-conv}:
convergence both in 2D and in 3D is observed.
The two dimensional case results indicate an order two
convergence relatively to the mesh size. As in the first test
case, such a conclusion is not possible in the
three dimensional case: a much finer reference
mesh (not affordable) would be needed.

\end{document}